\title{} \author{} \date{}
\numberwithin{equation}{section} 
\newtheorem{thr}{Theorem}[section]
\newtheorem{te}[thr]{Theorem}
\newtheorem{df}[thr]{Definition}
\newtheorem{lm}[thr]{Lemma}
\newtheorem{lem}[thr]{Lemma}
\newtheorem{con}[thr]{Corollary}
\newtheorem{ex}[thr]{Example}
\newtheorem{fct}[thr]{Fact}
\newtheorem{fac}[thr]{Fact}
\newcommand{\Conv}{\mathop{\rm Conv}\nolimits}
\def\upar{\!\uparrow}
\def\downar{\!\downarrow}
\newcommand{\cl}{\mathop{\rm cl}\nolimits}
\newcommand{\Dec}{\mathop{\rm Dec}}
\def\Lim{
\lim\nolimits}
\newcommand\cyr
	\renewcommand\rmdefault{wncyr}
	\renewcommand\sfdefault{wncyss}
	\renewcommand\encodingdefault{OT2}
\DeclareTextFontCommand{\textcyr}{\cyr}
\def\cprime{\char"7E }
\begin{document}
\thispagestyle{empty}
\begin{center}
{\large \bf Closed sets in a generalization of the Alexandrov cube\footnote{This research was supported by the Science Fund of the Republic of Serbia, Grant No. 7750027: Set-theoretic, model-theoretic and Ramsey-theoretic phenomena in mathematical structures: similarity and diversity–SMART
}

} \vspace*{3mm}

{\bf Miloš S. Kurilić\footnote{Department of Mathematics and Informatics, Faculty of Sciences, University of Novi Sad, Serbia, e-mail: \href{mailto:anika.njamcul@dmi.uns.ac.rs}{milos@dmi.uns.ac.rs}, ORCID iD: \href{https://orcid.org/0000-0002-9268-4205}{orcid.org/0000-0002-9268-4205}}}
and
{\bf Aleksandar Pavlovi\'c\footnote{Department of Mathematics and Informatics, Faculty of Sciences, University of @Novi Sad, Serbia, e-mail: \href{mailto:apavlovic@dmi.uns.ac.rs}{apavlovic@dmi.uns.ac.rs}, ORCID iD: \href{https://orcid.org/0000-0001-5001-6869}{orcid.org/0000-0001-5001-6869}}}
\end{center}

\begin{abstract}

We continue work on the topology obtained by the convergence $\lambda_{ls}$, which started in \cite{KuPaCZ}, and further investigated in \cite{KuPaFil19}. The main goal is to describe the closed sets  and closure operator by the family of its minimal elements, with the accent on complete Boolean algebras satisfying countable chain condition. \\

 {\it AMS Mathematics  Subject Classification $(2010)$}:
03E40, 
03E17, 
06E10, 
54A20, 
54D55 
\\[1mm] {\it Key words and phrases:} complete Boolean algebra, convergence structure, algebraic convergence,  Alexandrov cube, closed sets

\end{abstract}

\section{Introduction}
\label{ch-Ouparrow}

The main topic of this paper  will be  the topology
${\mathcal O}^\uparrow$, generated  by  the convergence
$\lambda_{ls}$, defined by $\lambda_{ls}(x)=\{\limsup x\}$. This convergence is introduced in \cite{KuPaNSJOM} as $\lambda_4$, further investigated in \cite{KuPaCZ}. Also some basic properties of topology obtained by this convergence are investigated in \cite{KuPaFil19}

The accent will be on properties of closed sets and  the closure operator. The closure of some specific sets has been
determined.  In  ccc c.B.a.'s a closed set is described by the
family of its minimal elements. Some necessary conditions   for a
subset of ${\mathbb B}$ to be the set of minimal elements of a
closed set  are isolated.


\section{\normalsize Preliminaries and notation}

Our notation is mainly standard.  $\omega$ denotes the set of
natural numbers, $Y^X$ the set of all functions $f: X \rightarrow Y$
and $\omega^{\uparrow \omega}$ the set of all strictly increasing
functions from $\omega$ into $\omega$. A {\bf sequence} in a set $X$
is each function $x:\omega \rightarrow X$; instead of $x(n)$ we
usually write $x_n$ and also $x=\langle x_n:n\in \omega\rangle $.
The {\bf constant sequence} $\langle a,a,a,\dots\rangle$ is denoted
by $\langle a \rangle$.  If $f \in \omega^{\uparrow \omega}$, the
sequence $y =x \circ f$  is said to be a {\bf subsequence} of the
sequence $x$ and we write $y \prec x$.

If $\langle X, {\mathcal O}\rangle $ is a topological space, a point
$a\in X$ is said to be a {\bf limit point of a sequence} $x\in
X^{\omega }$ (we will write: $x \rightarrow_{\mathcal O}a$) iff each
neighborhood $U$ of $a$ contains all but finitely many members of
the sequence. A space $\langle X,{\mathcal O}\rangle $ is called
{\bf sequential} iff a set $A \subset X$ is closed whenever it
contains each limit of each sequence in $A$.

If $X$ is a non-empty set, each mapping $\lambda : X^{\omega
}\rightarrow P(X)$ is a {\bf convergence} on $X$ and the mapping
$u_{\lambda }:P(X)\rightarrow P(X)$, defined by $u_{\lambda }(A)=
\bigcup_{x\in A^{\omega }}\lambda (x)$, the {\bf operator of
sequential closure} determined by $\lambda$. A convergence $\lambda$
satisfying $|\lambda(x)|\leq 1$, for each sequence $x$ is called a
{\bf Hausdorff convergence}.

If $\langle X,{\mathcal O}\rangle $ is a topological space, then the
mapping $\lim_{\mathcal O}:X^\omega\rightarrow P(X)$ defined by
$\lim_{\mathcal O}(x)=\{ a\in X :x \rightarrow_{\mathcal O}a \}$ is
{\bf the convergence on} $X$ {\bf determined by the topology}
${\mathcal O}$ and for the convergence $\lambda=\lim_{\mathcal O}$ we have (see \cite{Eng85})\\[-3mm]

(L1) $\forall a \in X\;\;  a \in \lambda (\langle a\rangle )$;

(L2) $\forall x \in X^\omega\; \forall y \prec x\; \lambda (x)
\subset \lambda (y)$;

(L3) $\forall x \in X^\omega \;
     \forall a \in X\;
     ((\forall y \prec x\;
     \exists z\prec y \;
     a \in \lambda(z)) \Rightarrow a \in \lambda(x))$.\\[-3mm]

A convergence $\lambda : X^{\omega }\rightarrow P(X)$ is called a
{\bf topological convergence} iff there is a topology ${\mathcal O}$
on $X$ such that $\lambda =\lim_{\mathcal O}$. The following fact
(see, for example, \cite{KuPaNSJOM}) shows that each convergence has
a minimal topological extension  and connects topological and
convergence structures.
\begin{fac}\rm\label{T1201}
Let $\lambda : X^{\omega }\rightarrow P(X)$ be a convergence on a
non-empty set $X$. Then

(a) There is the maximal topology ${\mathcal O}_{\lambda }$  on $X$
satisfying $\lambda  \leq \lim\nolimits_{{\mathcal O}_\lambda}$;

(b) $ {\mathcal O}_\lambda=
    \{ O\subset X : \forall x \in X^\omega \, (O \cap \lambda(x) \neq \emptyset
    \Rightarrow \exists n_0 \in \omega~ \forall n \geq n_0~ x_n \in O)\}$;

(c) $\langle X,{\mathcal O}_\lambda \rangle$ is a sequential space;

(d) ${\mathcal O}_\lambda = \{ X\setminus F : F\subset X \wedge
u_{\lambda }(F) =F \}$,
    if $\lambda$ satisfies (L1) and (L2);

(e) $\lim _{{\mathcal O}_{\lambda }} =\min \{ \lambda ' \in \Conv
(X) :
                                      \lambda ' \mbox{ is topological and } \lambda \leq \lambda ' \}$;

(f) ${\mathcal O}_{\lim _{{\mathcal O}_{\lambda }}} ={\mathcal
O}_{\lambda }$;

(g) If $\lambda _1 : X^{\omega }\rightarrow P(X)$ and $\lambda _1
\leq \lambda $,
    then ${\mathcal O} _{\lambda } \subset {\mathcal O} _{\lambda _1}$;

(h) $\lambda $ is a topological convergence iff $\lambda =\lim
_{{\mathcal O}_{\lambda }}$.
\end{fac}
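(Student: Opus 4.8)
The plan is to take the formula in (b) as the \emph{definition} of $\mathcal{O}_\lambda$ and to derive everything else from it together with the abstract properties (L1)--(L3) of topological convergences. First I would check that the family
\[
\mathcal{O}_\lambda = \{O \subseteq X : \forall x \in X^\omega\,(O \cap \lambda(x) \neq \emptyset \Rightarrow \exists n_0\,\forall n \geq n_0\; x_n \in O)\}
\]
is a topology: $\emptyset$ and $X$ obviously belong, closure under finite intersections follows by taking the larger of two thresholds $n_0$, and closure under arbitrary unions follows because if $(\bigcup_i O_i)\cap\lambda(x)\ne\emptyset$ then already some $O_{i_0}\cap\lambda(x)\ne\emptyset$. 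This settles existence in (a) once I verify the two halves of the maximality claim. For $\lambda \le \lim_{\mathcal{O}_\lambda}$: if $a \in O \cap \lambda(x)$ with $O \in \mathcal{O}_\lambda$ a neighborhood of $a$, the defining property of $O$ gives a tail of $x$ inside $O$, so $x \to_{\mathcal{O}_\lambda} a$. For maximality: if $\mathcal{O}$ is any topology with $\lambda \le \lim_{\mathcal{O}}$ and $O \in \mathcal{O}$, then whenever $a \in O \cap \lambda(x)$ we have $x \to_{\mathcal{O}} a$ with $O$ an $\mathcal{O}$-neighborhood of $a$, so a tail of $x$ lies in $O$; hence $O \in \mathcal{O}_\lambda$.

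With (b) in hand, (g) is immediate, since $\lambda_1 \le \lambda$ only weakens the hypothesis $O \cap \lambda_1(x) \ne \emptyset$, so the defining condition for $\mathcal{O}_\lambda$ implies that for $\mathcal{O}_{\lambda_1}$, giving $\mathcal{O}_\lambda \subseteq \mathcal{O}_{\lambda_1}$. The remaining algebraic parts (e), (f), (h) I would obtain from (a) together with the elementary observation that enlarging a topology can only shrink its limits: $\mathcal{O}' \subseteq \mathcal{O}_\lambda \Rightarrow \lim_{\mathcal{O}_\lambda} \le \lim_{\mathcal{O}'}$. For (e), any topological $\lambda' = \lim_{\mathcal{O}'}$ with $\lambda \le \lambda'$ forces $\mathcal{O}' \subseteq \mathcal{O}_\lambda$ by maximality, whence $\lim_{\mathcal{O}_\lambda} \le \lambda'$; as $\lim_{\mathcal{O}_\lambda}$ is itself topological and $\ge \lambda$ by (a), it is the minimum. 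Part (f) follows by applying (g) to $\lambda \le \lim_{\mathcal{O}_\lambda}$ (giving $\mathcal{O}_{\lim_{\mathcal{O}_\lambda}} \subseteq \mathcal{O}_\lambda$) and by maximality of $\mathcal{O}_{\lim_{\mathcal{O}_\lambda}}$ applied to the topology $\mathcal{O}_\lambda$ itself (giving the reverse inclusion). Part (h) is then a one-line consequence of (e): a topological $\lambda$ lies in the family of (e) and hence dominates its minimum $\lim_{\mathcal{O}_\lambda}$, while (a) gives the reverse inequality; the converse direction is trivial.

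For (c) I would show directly that every set $A$ sequentially closed with respect to $\lim_{\mathcal{O}_\lambda}$ is closed, i.e.\ that $O := X \setminus A \in \mathcal{O}_\lambda$. Suppose $a \in O \cap \lambda(x)$; if no tail of $x$ lay in $O$, infinitely many terms of $x$ would belong to $A$, yielding a subsequence $y \prec x$ with $y \in A^\omega$. The key point is that although $\lambda$ need not satisfy (L2), the topological convergence $\lim_{\mathcal{O}_\lambda}$ does, so $a \in \lambda(x) \subseteq \lim_{\mathcal{O}_\lambda}(x) \subseteq \lim_{\mathcal{O}_\lambda}(y) \subseteq A$ by sequential closedness, contradicting $a \in O$. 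Finally (d) exploits the extra hypotheses (L1) and (L2) on $\lambda$: (L1) gives $F \subseteq u_\lambda(F)$ always (via constant sequences), and a subsequence argument identical to the one just used---now legitimately applied to $\lambda$ itself through (L2)---shows $O \in \mathcal{O}_\lambda \iff u_\lambda(X \setminus O) \subseteq X \setminus O$, which combined with the reverse inclusion from (L1) yields the stated description of the closed sets. I expect the genuinely delicate step to be (c): one must resist invoking a subsequence property for $\lambda$, which is not assumed, and instead route the argument through the already-topological convergence $\lim_{\mathcal{O}_\lambda}$ furnished by (a).
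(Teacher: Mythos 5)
The paper does not prove this statement at all: it is quoted as a known Fact with a pointer to \cite{KuPaNSJOM}, so there is no in-paper argument to compare yours against. Judged on its own, your proof is correct and is the standard self-contained derivation one would expect to find in that reference: take (b) as the definition, verify the topology axioms and the two halves of maximality to get (a), and then let (e)--(h) fall out of maximality plus the monotonicity of $\lim$ under coarsening. Your handling of the two places where subsequences enter is the right one and is the only genuinely delicate point: in (c) you correctly refuse to use (L2) for $\lambda$ (which is not assumed) and instead pass to $a\in\lambda(x)\subseteq\lim_{\mathcal{O}_\lambda}(x)\subseteq\lim_{\mathcal{O}_\lambda}(y)$ using the fact that any topological convergence satisfies (L2); in (d) the same subsequence argument is legitimately applied to $\lambda$ itself because (L2) is now hypothesized, and (L1) supplies $F\subseteq u_\lambda(F)$ so that $u_\lambda(F)\subseteq F$ upgrades to equality. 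The only nitpick is cosmetic: in (c) you should also record the trivial converse that closed sets are always sequentially closed, so that ``sequentially closed $\Rightarrow$ closed'' is indeed all that needs proving; and in (e) it is worth stating explicitly the lemma you rely on, namely that $\mathcal{O}'\subseteq\mathcal{O}''$ implies $\lim_{\mathcal{O}''}\leq\lim_{\mathcal{O}'}$, since that is what converts the inclusion of topologies obtained from maximality into the inequality of convergences. Neither omission is a gap.
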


In general, $\lambda$ does not have to satisfy conditions (L1)-(L3).
The minimal closures of a convergence under (L1), (L2) and (L3) are
described in the following general fact (see \cite{KuPaNSJOM}).

\begin{fac}\rm\label{T1247}
Let  $\lambda : X^{\omega }\rightarrow P(X)$ be a convergence. Then

(a) The convergence $\lambda ' : X^{\omega }\rightarrow P(X)$
defined by
$$
\lambda ' (x)= \left\{
       \begin{array}{ll}
        \lambda (x) \cup \{ a \}      &  \mbox{ if } x=\langle a \rangle ,\mbox{ for some } a \in X ,\\
        \lambda (x)                   &  \mbox{ otherwise,}
       \end{array} \right.
$$
is the minimal convergence satisfying (L1) and $\lambda \leq
\lambda'$;

(b) If $\lambda$ satisfies (L1), then the mapping  $\bar{\lambda}
:X^\omega \rightarrow P(X)$ defined by $$ \textstyle
\bar{\lambda}(y)=\bigcup _{x\in X^\omega, f\in \omega^{\uparrow
\omega}, y=x\circ f}\lambda(x) $$ is the minimal convergence bigger
than $\lambda$ and satisfying (L1) and (L2);

(c) If $\lambda$ satisfies (L1) and (L2), then $\lambda^* :X^\omega
\rightarrow P(X)$ defined by $$ \textstyle \lambda^*(y)=
\bigcap_{f\in \omega^{\uparrow \omega}}
                 \bigcup_{g\in \omega^{\uparrow \omega}}{\lambda}(y \circ f \circ g)
$$ is the minimal convergence bigger than ${\lambda }$ and satisfying
(L1)-(L3);

(d) $\lambda \leq \lambda' \leq \lambda' \bar{\enspace} \leq
\lambda' \bar{\enspace} ^* \leq \lim _{{\mathcal O}_{\lambda }}$;

(e) ${\mathcal O} _{\lambda } ={\mathcal O} _{\lambda' }={\mathcal
O} _{\lambda' \bar{\enspace} } ={\mathcal O} _{\lambda'
\bar{\enspace} ^* }$.
\end{fac}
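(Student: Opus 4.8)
The plan is to treat parts (a), (b) and (c) by a common three-step template and then obtain (d) and (e) as formal consequences. Each of (a)--(c) asserts that an explicitly defined convergence is the $\leq$-minimal one lying above $\lambda$ and satisfying a prescribed sublist of (L1)--(L3), so for each I would check in turn that (i) it dominates $\lambda$, (ii) it satisfies the required axioms, and (iii) every competitor with those properties dominates it. The only structural facts I need about $\omega^{\uparrow\omega}$ are that it contains the identity, is closed under composition, and that $y\prec x$ means exactly $y=x\circ f$ for some $f\in\omega^{\uparrow\omega}$.

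For (a) all three steps are immediate from the constant-sequence clause: $\lambda\leq\lambda'$ and (L1) hold by inspection, and any $\mu\geq\lambda$ with (L1) satisfies $\mu(\langle a\rangle)\supseteq\lambda(\langle a\rangle)\cup\{a\}=\lambda'(\langle a\rangle)$ while $\mu(x)\supseteq\lambda(x)=\lambda'(x)$ for non-constant $x$. For (b) the engine is composition of increasing maps. Taking $x=y$, $f=\mathrm{id}$ gives $\lambda\leq\bar\lambda$, and (L1) is inherited from $\lambda$; for (L2), if $z=y\circ g$ and $a\in\bar\lambda(y)$ is witnessed by $y=x\circ f$ with $a\in\lambda(x)$, then $z=x\circ(f\circ g)$ witnesses $a\in\bar\lambda(z)$; minimality follows since any (L2)-convergence $\mu\geq\lambda$ has $a\in\lambda(x)\subseteq\mu(x)\subseteq\mu(y)$ whenever $y\prec x$.

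The substance is in (c), and I expect the verification of (L3) for $\lambda^*$ to be the main obstacle, because it requires keeping the nested increasing reindexings straight. The key observation is that $a\in\lambda^*(y)$ says precisely ``every subsequence of $y$ has a further subsequence whose $\lambda$-limit set contains $a$.'' From this, $\lambda\leq\lambda^*$ follows from (L2) for $\lambda$ (every subsequence of $y$ retains $a$ as a $\lambda$-limit), and (L1), (L2) for $\lambda^*$ reduce to substituting constants and, for $z=y\circ h$, feeding the composite $h\circ f$ into the defining intersection. For (L3), assuming every $x\circ p\prec x$ admits $x\circ p\circ q$ with $a\in\lambda^*(x\circ p\circ q)$, I would fix an arbitrary $f$, apply the hypothesis to $x\circ f$ to obtain $q$ with $a\in\lambda^*(x\circ f\circ q)$, unwind that membership with inner index the identity to get $g'$ with $a\in\lambda(x\circ f\circ q\circ g')$, and set $g:=q\circ g'$ to witness the $f$-instance of $a\in\lambda^*(x)$. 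Minimality is cleaner: for $\mu\geq\lambda$ with (L1)--(L3) and $a\in\lambda^*(y)$, each $y\circ f\prec y$ produces $g$ with $a\in\lambda(y\circ f\circ g)\subseteq\mu(y\circ f\circ g)$, and (L3) for $\mu$ then yields $a\in\mu(y)$.

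Finally, (d) and (e) should follow formally. For (d), the chain $\lambda\leq\lambda'\leq\bar{\lambda'}\leq\bar{\lambda'}^{*}$ is just the three minimality statements applied in succession, the input of each operator satisfying exactly the axioms the next one requires; the top bound $\bar{\lambda'}^{*}\leq\lim_{\mathcal O_\lambda}$ holds because $\lim_{\mathcal O_\lambda}$ is topological, hence satisfies (L1)--(L3), and dominates $\lambda$ by Fact~\ref{T1201}(a), so by minimality it dominates each successive extension. For (e), I would apply the order-reversing assignment $\mu\mapsto\mathcal O_\mu$ of Fact~\ref{T1201}(g) to the chain in (d), getting $\mathcal O_{\lim_{\mathcal O_\lambda}}\subseteq\mathcal O_{\bar{\lambda'}^{*}}\subseteq\mathcal O_{\bar{\lambda'}}\subseteq\mathcal O_{\lambda'}\subseteq\mathcal O_\lambda$, and then collapse the chain to equalities using $\mathcal O_{\lim_{\mathcal O_\lambda}}=\mathcal O_\lambda$ from Fact~\ref{T1201}(f).
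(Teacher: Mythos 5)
Your proof is correct and complete; the one genuinely delicate point, the (L3) verification for $\lambda^*$ via applying the hypothesis to $x\circ f$, unwinding with inner index the identity, and recombining as $g=q\circ g'$, is handled properly, as are the uses of minimality plus Fact~\ref{T1201}(a),(f),(g) in parts (d) and (e). Note that the paper itself gives no proof of Fact~\ref{T1247} --- it is quoted from \cite{KuPaNSJOM} --- so there is no in-paper argument to compare against; yours is the standard verification one would expect to find there.
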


A convergence $\lambda:X^\omega
\rightarrow P(X)$ will be called {\bf weakly-topological} iff it
satisfies conditions (L1) and (L2) and its (L3)-closure, $\lambda
^*$, is a topological convergence. More about weakly-topological convergence, including the proofs of the following general facts,  can
be found in \cite{KuPaNSJOM}.

\begin{fac}\label{T1263}\label{T1220} \rm
Let $\lambda:X^\omega \rightarrow P(X)$ be a convergence satisfying
(L1) and (L2).

(a) $\lambda$ is weakly topological iff $\lambda^*=\lim_{{\mathcal
O}_\lambda }$, that is for each $x\in X^\omega$ and $a\in X$
$$
\textstyle a \in \lim_{{\mathcal O}_\lambda}(x) \Leftrightarrow
\forall y \prec x \;\; \exists z \prec y \;\; a \in \lambda(z).
$$

(b) If $\lambda$ is a Hausdorff convergence, then $\lambda^*$ is
also a Hausdorff convergence and $\lambda^*=\lim_{{\mathcal
O}_\lambda}$, that is $\lambda$ is a weakly-topological convergence.

\end{fac}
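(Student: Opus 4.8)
Throughout I use the explicit description of $\lambda^*$ obtained from Fact~\ref{T1247}(c) by reindexing: since every $y\prec x$ has the form $x\circ f$ and every $z\prec y$ the form $x\circ f\circ g$, the inner intersection/union over $\omega^{\uparrow\omega}$ collapses to
\[
\lambda^*(x)=\{a\in X:\forall y\prec x\;\exists z\prec y\;\;a\in\lambda(z)\}\qquad(\star)
\]
I also use that, because $\lambda$ satisfies (L1) and (L2), one has $\lambda'=\bar\lambda=\lambda$, so Fact~\ref{T1247}(d),(e) reduce to $\lambda\le\lambda^*\le\lim_{{\mathcal O}_\lambda}$ and ${\mathcal O}_{\lambda^*}={\mathcal O}_\lambda$. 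Part (a) is then immediate from the cited facts. For $(\Leftarrow)$, if $\lambda^*=\lim_{{\mathcal O}_\lambda}$ then $\lambda^*$ is determined by a topology, hence topological, so by definition $\lambda$ is weakly topological. For $(\Rightarrow)$, if $\lambda$ is weakly topological then $\lambda^*$ is topological, whence Fact~\ref{T1201}(h) gives $\lambda^*=\lim_{{\mathcal O}_{\lambda^*}}$, and ${\mathcal O}_{\lambda^*}={\mathcal O}_\lambda$ yields $\lambda^*=\lim_{{\mathcal O}_\lambda}$; the displayed equivalence is just $(\star)$ read with $\lambda^*=\lim_{{\mathcal O}_\lambda}$.

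For part (b) I first verify that $\lambda^*$ is Hausdorff. If $a,b\in\lambda^*(x)$, then applying $(\star)$ to $a$ with $y=x$ gives $z\prec x$ with $a\in\lambda(z)$; applying $(\star)$ to $b$ with the subsequence $z$ gives $w\prec z$ with $b\in\lambda(w)$; and (L2) upgrades $a\in\lambda(z)$ to $a\in\lambda(w)$. Hence $\{a,b\}\subseteq\lambda(w)$, and $|\lambda(w)|\le1$ forces $a=b$. It then remains to show $\lambda^*=\lim_{{\mathcal O}_\lambda}$ (equivalently, by part (a), that $\lambda$ is weakly topological), and since $\lambda^*\le\lim_{{\mathcal O}_\lambda}$ is already known, I only need $\lim_{{\mathcal O}_\lambda}\le\lambda^*$.

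I would argue this contrapositively. Assume $a\notin\lambda^*(x)$ and, by $(\star)$, fix $y\prec x$ with $a\notin\lambda(z)$ for all $z\prec y$. Since (L1) would place $a\in\lambda(\langle a\rangle)$ if $a$ occurred infinitely often in $y$, at most finitely many terms of $y$ equal $a$; deleting them I may assume $y_n\neq a$ for all $n$, still with $a\notin\lambda(z)$ (so, by $(\star)$ again, $a\notin\lambda^*(z)$) for every $z\prec y$. As $\lambda$ (and $\lambda^*$) satisfy (L1),(L2), Fact~\ref{T1201}(d) identifies the ${\mathcal O}_\lambda$-closed sets with the $u_\lambda$-fixed sets, equivalently the $u_{\lambda^*}$-fixed sets; so it suffices to show that $F=\Cl_{{\mathcal O}_\lambda}(\{y_n:n\in\omega\})$ omits $a$. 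Indeed, then $X\setminus F$ is an open neighbourhood of $a$ missing the infinitely many terms of the subsequence $y$ of $x$, so $a\notin\lim_{{\mathcal O}_\lambda}(x)$.

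Thus everything reduces to the claim that the set $C=\{y_n:n\in\omega\}\cup\{b:\exists z\prec y\;\;b\in\lambda^*(z)\}$, which omits $a$ by the previous paragraph, is already $u_{\lambda^*}$-closed (so $F\subseteq C$). This closure property is the one genuinely hard point, and it is exactly where the Hausdorff hypothesis is indispensable: given $b\in\lambda^*(v)$ with each $v_k$ either a term of $y$ or a $\lambda^*$-limit $v_k\in\lambda^*(z^{(k)})$ of some $z^{(k)}\prec y$, one must manufacture a single $z\prec y$ with $b\in\lambda^*(z)$, which I would attempt through a diagonal drawing only finitely many (but increasingly deep) terms from each $z^{(k)}$, so that no further subsequence of $z$ is trapped inside one $z^{(k)}$, and then checking $b\in\lambda^*(z)$ via the (L3)-criterion $(\star)$ for $\lambda^*$. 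The obstacle — and the reason bare (L1)--(L3) do not suffice, since they produce the in general \emph{non}-topological $\lambda^*$ — is that a subsequence trapped in a single $z^{(k)}$ would be forced, by uniqueness of limits, to converge to $v_k\neq b$ and so violate the criterion; it is precisely the Hausdorffness of $\lambda^*$ established above that pins the limit of every admissible sub-subsequence uniquely to $b$ and lets the iterated-limit bookkeeping close up. I expect essentially all the real work of part (b) to sit in making this diagonal construction precise.
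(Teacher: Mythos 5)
First, a remark on the comparison itself: the paper does not prove Fact~\ref{T1263} --- it is quoted from \cite{KuPaNSJOM} --- so there is no in-paper argument to measure yours against. On its own terms, your part (a) is correct (it is exactly the bookkeeping with Fact~\ref{T1201}(h) and Fact~\ref{T1247}(d),(e) that you describe), and your verification that $\lambda^*$ inherits the Hausdorff property is also correct. The problem is the main claim of part (b), $\lim_{{\mathcal O}_\lambda}\le\lambda^*$: you reduce everything to the assertion that $C=\{y_n:n\in\omega\}\cup\bigcup_{z\prec y}\lambda^*(z)$ is $u_{\lambda^*}$-closed, equivalently that $a\notin\Cl_{{\mathcal O}_\lambda}(\{y_n:n\in\omega\})$. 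This assertion is false, not merely unproved, so no amount of care in the diagonal construction can rescue the reduction.

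Concretely, let $\langle X,{\mathcal O}\rangle$ be the Arens space: $X=\{a\}\cup\{p_n:n\in\omega\}\cup\{q_{n,m}:n,m\in\omega\}$, each $q_{n,m}$ isolated, basic neighbourhoods of $p_n$ of the form $\{p_n\}\cup\{q_{n,m}:m\ge k\}$, and basic neighbourhoods of $a$ of the form $\{a\}\cup\bigcup_{n\ge N}\left(\{p_n\}\cup\{q_{n,m}:m\ge g(n)\}\right)$ with $g\in\omega^\omega$. Put $\lambda=\lim_{\mathcal O}$; this is a Hausdorff convergence satisfying (L1)--(L3), so $\lambda^*=\lambda$ and ${\mathcal O}_\lambda={\mathcal O}$ (the space is sequential), and the conclusion of (b) does hold here. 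Now let $x$ be an injective enumeration of $\{q_{n,m}:n,m\in\omega\}$. No subsequence of $x$ $\lambda$-converges to $a$, so $a\notin\lambda^*(x)$ and you may take $y=x$ in your argument. A convergent injective sequence of $q$'s must eventually lie in a single spine and converges to the corresponding $p_n$, so your set $C$ equals $\{q_{n,m}:n,m\in\omega\}\cup\{p_n:n\in\omega\}$; but $\langle p_n\rangle$ is a sequence in $C$ with $\lambda^*$-limit $a\notin C$, so $C$ is not $u_{\lambda^*}$-closed, and in fact $\Cl_{{\mathcal O}_\lambda}(\{y_n:n\in\omega\})=X\ni a$. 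The open set witnessing $a\notin\lim_{{\mathcal O}_\lambda}(x)$ is not the complement of the closure of the whole range of $y$ but, for instance, $X\setminus\{q_{n,0}:n\in\omega\}$: one must first pass to a suitably thin infinite subset of $\{y_n:n\in\omega\}$ whose closure misses $a$, and producing such a subset is the real content of Kisy\'nski's theorem (a Hausdorff convergence satisfying (L1)--(L3) is topological), which is what is being quoted here. So the fix is not a sharper diagonal over $C$ --- it is a different choice of the closed set to avoid.
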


\begin{df}\label{D2301}
For an a convergence $\lambda:X^\omega\rightarrow P(X)$
 let $u_\lambda:P(X)\rightarrow P(X)$ be
the operator defined by
$$u_\lambda(A)=\textstyle
\bigcup_{y\in A^\omega}\lambda(y).$$

When it is clear which  convergence  is used, instead of
$u_\lambda$ we will briefly write    $u$.
\end{df}

\begin{fac}\label{T1206}\rm
Let $\lambda : X^{\omega }\rightarrow P(X)$ be  a convergence
satisfying (L1) and (L2) and let the mappings $u_{\lambda}^{\alpha
}: P(X) \rightarrow P(X)$, $\alpha \leq \omega _1$, be defined by
recursion in the following way: for $A\subset X$

$u_{\lambda}^0(A)=A$,

$u_{\lambda}^{\alpha +1}(A) = u_{\lambda }(u_{\lambda}^{\alpha
}(A))$ and

$u_{\lambda}^{\gamma }(A) = \bigcup _{\alpha <\gamma
}u_{\lambda}^{\alpha }(A)$, for a limit $\gamma \leq \omega _1$.

\noindent Then $u_{\lambda}^{\omega _1}$ is the closure operator in
the space $\langle X, {\mathcal O}_\lambda \rangle $.
\end{fac}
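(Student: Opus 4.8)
The plan is to fix an arbitrary $A \subset X$ and show that $u_\lambda^{\omega_1}(A)$ is precisely the smallest ${\mathcal O}_\lambda$-closed set containing $A$, i.e.\ that it equals $\Cl_{{\mathcal O}_\lambda}(A)$. First I would record two elementary properties of $u_\lambda$ that follow at once from (L1) and (L2): extensivity, $A \subset u_\lambda(A)$ (apply (L1) to constant sequences), and monotonicity, $A \subset B \Rightarrow u_\lambda(A) \subset u_\lambda(B)$ (every sequence in $A$ is a sequence in $B$). Together these give, by a routine transfinite induction, that the iterates form an increasing chain $\langle u_\lambda^\alpha(A) : \alpha \leq \omega_1\rangle$.

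The first main step is to show that $F := u_\lambda^{\omega_1}(A)$ is a fixed point of $u_\lambda$, hence ${\mathcal O}_\lambda$-closed by Fact \ref{T1201}(d). By extensivity it suffices to check $u_\lambda(F) \subset F$. Given any sequence $y \in F^\omega$, each term $y_n$ lies in some $u_\lambda^{\alpha_n}(A)$ with $\alpha_n < \omega_1$; since $\omega_1$ is regular and the set $\{\alpha_n : n \in \omega\}$ is countable, $\beta := \sup_n \alpha_n < \omega_1$, so the whole sequence lies in $u_\lambda^\beta(A)$ by monotonicity of the chain. Then $\lambda(y) \subset u_\lambda(u_\lambda^\beta(A)) = u_\lambda^{\beta+1}(A) \subset F$. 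Taking the union over all $y \in F^\omega$ yields $u_\lambda(F) \subset F$, so $u_\lambda(F) = F$.

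The second step is minimality. Let $C$ be any ${\mathcal O}_\lambda$-closed set with $A \subset C$; by Fact \ref{T1201}(d) we have $u_\lambda(C) = C$. A transfinite induction shows $u_\lambda^\alpha(A) \subset C$ for all $\alpha \leq \omega_1$: the base case is $A \subset C$; at successors $u_\lambda^{\alpha+1}(A) = u_\lambda(u_\lambda^\alpha(A)) \subset u_\lambda(C) = C$ by monotonicity; at limits the inclusion passes through the union. In particular $F \subset C$. Combining the two steps, $F$ is an ${\mathcal O}_\lambda$-closed set containing $A$ that is contained in every such set, so $F = \Cl_{{\mathcal O}_\lambda}(A)$, which is the claim.

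The only nontrivial point is the stabilization in the first step, and it rests entirely on the regularity (uncountable cofinality) of $\omega_1$ together with the countability of sequences: this is exactly what forces the transfinite iteration to close off at stage $\omega_1$. Everything else is bookkeeping with monotonicity and the characterization of closed sets in Fact \ref{T1201}(d).
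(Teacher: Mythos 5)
Your argument is correct and is the standard proof of this statement: the paper itself states Fact~\ref{T1206} without proof (it is quoted from the earlier work \cite{KuPaNSJOM}), and the expected argument is exactly yours --- extensivity and monotonicity of $u_{\lambda}$ from (L1)--(L2), stabilization at $\omega_1$ via the regularity of $\omega_1$ against countable sequences, and the identification of closed sets with fixed points of $u_{\lambda}$ from Fact~\ref{T1201}(d). No gaps; nothing further to add.
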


In a Boolean algebra ${\mathbb B}$, for a set  $A\subset{\mathbb B}$
let $A\upar=\{ b \in \mathbb{B} : \exists a \in A \, a\leq b\}$ and
$A\downar=\{ b \in \mathbb{B} : \exists a \in A \, a\geq b\}.$
Instead of $\{b\}\upar$ and $\{b\}\downar$ we write $b \upar$ and
$b\downar$ respectively. Clearly, $A\upar =\bigcup_{a \in A} a\upar$
and $A\downar =\bigcup_{a \in A} a\downar$. We will say that a set
$A$ is {\bf upward closed} iff $A=A\upar$ and a set $A$ is {\bf
downward closed} iff $A=A\downar$. Downward closed sets are also
called {\bf open} sets.

By $|X|$ we denote the {\bf cardinality} (the size) of the set $X$
and, if $\kappa$ is a cardinal, then $[X]^\kappa=\{A\subset
X:|A|=\kappa\}$  and $[X]^{<\kappa}=\{A\subset X : |A|<\kappa\}$.

\begin{fct}\label{T1001}
Let $x$ be a sequence in a topological space $\langle X, {\mathcal
O}\rangle $. Then

(a)
\begin{eqnarray*}
\Lim x & = & \textstyle \bigcap \left \{F \in {\mathcal F} : x_n \in
F \mbox{
for infinitely many  } n \in \omega \right\} \\
& = &\textstyle \bigcap_{A \in [\omega]^{ \omega}}\overline{\left \{
x_n:n\in A \right \}} \\
&=&\textstyle \bigcap_{f \in \omega^{\uparrow
\omega}}\overline{\left \{ x_{f(n)}:n\in \omega \right \}}.
\end{eqnarray*}

(b) $\lim x$ is a closed set.

(c) $\{x_n: n \in \omega\} \cup \lim x \subset \overline{\{x_n: n
\in \omega\}}$, but the reversed inclusion is not true in general.

(d) For each $m \in \omega$ there holds $\Lim \langle x_n : n\in
\omega \rangle = \Lim \langle x_{m+n} : n \in \omega \rangle$.
\end{fct}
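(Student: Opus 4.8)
The plan is to prove part (a) first, since (b)--(d) then follow quickly. The engine behind (a) is the definition of convergence: $a\in\Lim x$ means every neighbourhood of $a$ contains all but finitely many terms $x_n$. I would begin by establishing the middle equality $\Lim x=\bigcap_{A\in[\omega]^\omega}\overline{\{x_n:n\in A\}}$ directly from this definition, arguing both inclusions by contraposition. For ``$\subseteq$'': if some infinite $A$ had $a\notin\overline{\{x_n:n\in A\}}$, then an open neighbourhood $U$ of $a$ would miss every $x_n$ with $n\in A$, so infinitely many terms avoid $U$, contradicting $a\in\Lim x$. For ``$\supseteq$'': if $a\notin\Lim x$, pick an open neighbourhood $U$ of $a$ omitted by infinitely many terms and set $A=\{n:x_n\notin U\}\in[\omega]^\omega$; then $\{x_n:n\in A\}$ lies in the closed set $X\setminus U$, whence $a\notin\overline{\{x_n:n\in A\}}$.

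The remaining two equalities in (a) are purely set-theoretic. The identity $\bigcap_{A\in[\omega]^\omega}\overline{\{x_n:n\in A\}}=\bigcap_{f\in\omega^{\uparrow\omega}}\overline{\{x_{f(n)}:n\in\omega\}}$ holds because the map sending an infinite $A$ to its increasing enumeration is a bijection of $[\omega]^\omega$ onto $\omega^{\uparrow\omega}$ under which $\{x_n:n\in A\}=\{x_{f(n)}:n\in\omega\}$, so the two families being intersected literally coincide. For the equality with $\bigcap\{F\in\mathcal F:x_n\in F\text{ for infinitely many }n\}$ I would argue two inclusions: each $\overline{\{x_n:n\in A\}}$ with $A$ infinite is itself a closed set containing $x_n$ for infinitely many $n$, which gives one direction; conversely, any closed $F$ meeting the sequence infinitely often contains $\overline{\{x_n:n\in A_F\}}$ for $A_F=\{n:x_n\in F\}\in[\omega]^\omega$, which gives the other.

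Part (b) is then immediate: by (a), $\Lim x$ is an intersection of closed sets, hence closed. For (c), the inclusion $\{x_n:n\in\omega\}\subseteq\overline{\{x_n:n\in\omega\}}$ is trivial, and $\Lim x\subseteq\overline{\{x_n:n\in\omega\}}$ follows by taking $A=\omega$ in the middle formula of (a). To refute the reverse inclusion I would exhibit a concrete sequence; e.g.\ an enumeration $x$ of $\mathbb Q\cap[0,1]$ in $\mathbb R$, for which $\overline{\{x_n:n\in\omega\}}=[0,1]$ while $\Lim x=\emptyset$, so every irrational point of $[0,1]$ lies in the closure but in neither $\{x_n:n\in\omega\}$ nor $\Lim x$. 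Finally, (d) follows from the first description in (a): the predicate ``$x_n\in F$ for infinitely many $n$'' is insensitive to deleting the finitely many terms $x_0,\dots,x_{m-1}$, so $\langle x_n\rangle$ and its tail $\langle x_{m+n}\rangle$ determine exactly the same family of closed sets $F$, and hence the same intersection.

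I would add that none of these steps presents a genuine obstacle; the only point requiring care is the convergence argument in (a), where one must correctly play ``all but finitely many'' against ``infinitely many'' when passing to the subsequence indexed by $A$. Everything else is bookkeeping with closures and the bijection between $[\omega]^\omega$ and $\omega^{\uparrow\omega}$.
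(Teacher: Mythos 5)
Your proposal is correct and complete: the middle equality is established directly from the definition of $\Lim$, the other two equalities by the standard bijection between $[\omega]^\omega$ and $\omega^{\uparrow\omega}$ and by comparing the two families of closed sets, and parts (b)--(d) follow as you describe (the $\mathbb{Q}\cap[0,1]$ example does refute the reverse inclusion in (c)). The paper states this as a Fact without proof, so there is nothing to compare against; your argument is exactly the standard one the authors are implicitly relying on.
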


\begin{fct}\label{T2301}
If convergence $\lambda$ satisfies conditions (L1)
and (L2) then

(a) $u(\emptyset)=\emptyset$;

(b) $A \subset u(A)$;

(c) $A\subset B \Rightarrow u(A) \subset u(B)$;

(d) $u(A \cup B)=u(A) \cup u(B)$.
\end{fct}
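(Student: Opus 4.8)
The plan is to verify each of the four properties directly from the definition $u(A)=\bigcup_{y\in A^\omega}\lambda(y)$, invoking (L1) only for (b) and (L2) together with a pigeonhole argument only for (d); the remaining parts are purely set-theoretic manipulations of the indexing union.

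For (a), I would observe that there are no functions from $\omega$ into the empty set, so $\emptyset^\omega=\emptyset$ and the union defining $u(\emptyset)$ is empty. For (b), given $a\in A$, the constant sequence $\langle a\rangle$ lies in $A^\omega$, and (L1) yields $a\in\lambda(\langle a\rangle)\subset u(A)$; since $a$ was arbitrary, $A\subset u(A)$. For (c), I would note that $A\subset B$ forces $A^\omega\subset B^\omega$ (every sequence in $A$ is a sequence in $B$), so the union over the smaller index set is contained in the union over the larger one.

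The only part requiring a genuine argument is (d). The inclusion $u(A)\cup u(B)\subset u(A\cup B)$ follows at once from (c) applied to $A\subset A\cup B$ and to $B\subset A\cup B$. For the reverse inclusion I would take $c\in u(A\cup B)$, so that $c\in\lambda(y)$ for some $y\in(A\cup B)^\omega$. The decisive step, which I expect to be the main (if mild) obstacle, is the passage to a suitable subsequence: since each term $y_n$ lies in $A\cup B$, the pigeonhole principle gives that infinitely many terms lie in $A$ or infinitely many lie in $B$. In the first case there is an increasing $f\in\omega^{\uparrow\omega}$ with $z=y\circ f\in A^\omega$ and $z\prec y$; in the second case such a $z$ lies in $B^\omega$. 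By (L2) we have $c\in\lambda(y)\subset\lambda(z)$, whence $c\in u(A)$ in the first case and $c\in u(B)$ in the second, so in either case $c\in u(A)\cup u(B)$.

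The entire argument is elementary, and the single point worth flagging is that (L2)—monotonicity of $\lambda$ under passage to subsequences—is precisely what converts the pigeonhole split of the sequence into membership in one of $u(A)$ or $u(B)$; without (L2) the inclusion $u(A\cup B)\subset u(A)\cup u(B)$ could fail, and indeed the hypotheses (L1) and (L2) are exactly the properties used (L1 for (b), L2 for (d)).
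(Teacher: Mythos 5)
Your proof is correct, and it is the standard verification of these properties; the paper states this as a Fact without proof (treating it as routine), so there is no alternative argument to compare against. Each step checks out, including the one place where care is needed: the pigeonhole split of a sequence in $(A\cup B)^\omega$ into a subsequence lying in $A^\omega$ or $B^\omega$, combined with (L2) in the correct direction ($z\prec y$ gives $\lambda(y)\subset\lambda(z)$), which is exactly what yields $u(A\cup B)\subset u(A)\cup u(B)$.
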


\begin{fct} \label{T2403} Let $\lambda$ be convergence satisfying (L1) and (L2) and let $A\subset X$.
Then the following conditions are equivalent:

(a) The set $A \subset X$ is closed in the topology ${\mathcal
O}_{\lambda}$;

(b) $A=u(A)$;

(c) $\lambda$-limit of each sequence in $A$ is a subset of $A$.
\end{fct}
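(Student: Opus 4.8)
The plan is to prove the two equivalences (a) $\Leftrightarrow$ (b) and (b) $\Leftrightarrow$ (c) separately, since both are short and rest entirely on facts already established for convergences satisfying (L1) and (L2).

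For (a) $\Leftrightarrow$ (b) I would appeal directly to Fact \ref{T1201}(d). Because $\lambda$ satisfies (L1) and (L2), that fact applies and gives ${\mathcal O}_\lambda = \{X \setminus F : u_\lambda(F) = F\}$; that is, the open sets of the topology ${\mathcal O}_\lambda$ are exactly the complements of the $u_\lambda$-fixed sets. Taking complements, a set $A$ is closed iff $X \setminus A$ is open iff $A$ itself satisfies $u(A) = A$. This yields the equivalence of (a) and (b) with no further work.

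For (b) $\Leftrightarrow$ (c) I would first note that by Fact \ref{T2301}(b) the inclusion $A \subset u(A)$ holds for every $A \subset X$, so condition (b), namely $u(A) = A$, is equivalent to the single reverse inclusion $u(A) \subset A$. By Definition \ref{D2301} we have $u(A) = \bigcup_{y \in A^\omega} \lambda(y)$, and a union is contained in $A$ precisely when each of its members is; hence $u(A) \subset A$ holds iff $\lambda(y) \subset A$ for every sequence $y \in A^\omega$. This last statement is exactly condition (c), so (b) and (c) are equivalent.

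No step here presents a genuine obstacle: the whole argument is an unpacking of the definition of $u_\lambda$ together with a single structural appeal to Fact \ref{T1201}(d). The only points requiring care are verifying that the hypotheses (L1) and (L2) are in force, so that Fact \ref{T1201}(d) is available, and recognizing that the reverse inclusion $u(A) \subset A$ is nothing more than a reformulation of (c) via the union defining the sequential closure operator.
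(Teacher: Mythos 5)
Your argument is correct: the paper states Fact \ref{T2403} without proof, and your two-step verification is exactly the intended one, deriving (a)$\Leftrightarrow$(b) from Fact \ref{T1201}(d) (available since $\lambda$ satisfies (L1) and (L2)) and (b)$\Leftrightarrow$(c) from the inclusion $A\subset u(A)$ of Fact \ref{T2301}(b) together with the definition $u(A)=\bigcup_{y\in A^\omega}\lambda(y)$. Nothing is missing.
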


\subsection{Forcing}
Several results in the rest of the paper are formulated and proved  by the
method of {\bf forcing}. Roughly speaking, the
forcing construction has the following steps.
First, for a convenient complete Boolean algebra ${\mathbb B}$ belonging to the model $V$ of ZFC in which we work
(the {\bf ground model}), the class $V^{{\mathbb B}}$ of ${\mathbb B}${\bf -names} (i.e.\ special
${\mathbb B}$-valued functions) is constructed by recursion.
Second, for each ZFC formula
$\varphi(v_0,\ldots,v_n)$ and arbitrary names $\tau_0,\ldots,
\tau_n$ the {\bf Boolean value} $\|\varphi(\tau_0,\ldots,\tau_n)\|$
is defined by recursion. Finally, if $G\subset {\mathbb B}$ is a
${\mathbb B}$-{\bf generic filter over} $V$ (i.e. $G$ intersects all
dense subsets of ${\mathbb B}^+$ belonging to $V$) then for each name
$\tau$ the $G$-{\bf evaluation} of $\tau$, denoted by $\tau_G$ is
defined by $\tau_G=\{\sigma_G:\sigma\in {\rm dom}(\tau) \wedge
\tau(\sigma)\in G\}$ and $V_{\mathbb B}[G]=\{\tau_G: \tau\in
V^{\mathbb B}\}$ is the corresponding {\bf generic extension} of
$V$, the minimal model of ZFC such that $V\subset V_{\mathbb
B}[G]\ni G$. The properties of $V_{\mathbb B}[G]$ are controlled by
the choice of ${\mathbb B}$ and $G$ and by the {\bf forcing
relation} $\Vdash$ defined by
$$b\Vdash \varphi(\tau) \
\stackrel{def}{\Longleftrightarrow}\forall G \in {\mathcal
G}_V^{\mathbb B} \left(b \in G \Rightarrow V_{\mathbb B}[G]
\vDash\varphi(\tau_G)\right).$$
(Here ``$G \in {\mathcal G}_V^{\mathbb B}$" will be an abbreviation for
``$G$ is a ${\mathbb B}$-generic filter over $V$".)
If $A\in V$, then there is a ${\mathbb B}$-name
$\check{A}=\{\langle a,1\rangle :  a\in A\}$ such that $(\check{A})_G=A$, in each extension
$V_{\mathbb B}[G]$. A proof of the following
statement can be found in \cite{Jec97}.

\begin{fac}\label{T719}\rm
If $\varphi$ and $ \psi$ are ZFC formulas  and $A\in V$, then

(a) $\| \varphi \wedge \psi\|=\| \varphi\| \wedge \| \psi\|$;

(b) $ \|\neg \varphi\|=\|\varphi\|'$;

(c) $\|\forall x\, \varphi(x)\|=\bigwedge_{\tau\in V^{\mathbb B}}\|\varphi(\tau)\|$;

(d) $\|\forall x \in \check{A} \, \varphi(x) \|=\bigwedge_{a \in A}\| \varphi(\check{a}\|$;

(e) $b \Vdash \varphi$ if and only if $b \leq \|\varphi\|$;

(f) $1\Vdash \varphi \Rightarrow \psi $ if and only if $\| \varphi\|\leq \|\psi\|$;

(g) If ${\rm ZFC}\vdash \varphi(x)$, then $1 \Vdash \varphi(\tau)$, for each $\tau \in V^{\mathbb B}$;

(h) If $V_{\mathbb B}[G]\vDash \varphi $, then there is $b \in G$ such that $b \Vdash \varphi$;

(i) If $1 \Vdash \exists x \, \varphi(x)$, then $1\Vdash \varphi(\tau)$, for some $\tau\in V^{\mathbb B}$ (The Maximum
Principle).
\end{fac}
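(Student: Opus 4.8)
The plan is to derive all nine items from two foundations: the recursive definition of the Boolean value $\|\cdot\|$ on $V^{\mathbb B}$, and the Forcing Theorem (truth lemma) asserting that $V_{\mathbb B}[G]\vDash\varphi$ if and only if $\|\varphi\|\in G$, for every ${\mathbb B}$-generic $G$ over $V$. Items (a), (b) and (c) require no work beyond unfolding definitions: the Boolean value is defined by recursion on the logical complexity of $\varphi$ precisely so that $\|\varphi\wedge\psi\|=\|\varphi\|\wedge\|\psi\|$, $\|\neg\varphi\|=\|\varphi\|'$, and $\|\forall x\,\varphi(x)\|=\bigwedge_{\tau\in V^{\mathbb B}}\|\varphi(\tau)\|$, so I would simply invoke the relevant clauses of the definition.

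For (d) I would first rewrite the bounded quantifier as $\forall x\,(x\in\check A\Rightarrow\varphi(x))$ and apply (c), reducing the problem to showing $\bigwedge_{\tau}\bigl(\|\tau\in\check A\|'\vee\|\varphi(\tau)\|\bigr)=\bigwedge_{a\in A}\|\varphi(\check a)\|$. The inequality $\leq$ follows by restricting the left-hand meet to the names $\tau=\check a$, for which $\|\check a\in\check A\|=1$. For $\geq$ I would use the definition $\check A=\{\langle\check a,1\rangle:a\in A\}$ to compute $\|\tau\in\check A\|=\bigvee_{a\in A}\|\tau=\check a\|$, together with the substitution law $\|\tau=\check a\|\wedge\|\varphi(\check a)\|\leq\|\varphi(\tau)\|$; distributing the meet over the join then yields the claim.

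The heart of the matter is (e), the Forcing Theorem in its $\Vdash$-versus-$\|\cdot\|$ form. For $b\leq\|\varphi\|\Rightarrow b\Vdash\varphi$ I would argue that any generic $G\ni b$ contains $\|\varphi\|$ as well, since $G$ is upward closed, whence $V_{\mathbb B}[G]\vDash\varphi$ by the truth lemma. For the converse I would argue contrapositively: if $b\not\leq\|\varphi\|$ then $b\wedge\|\neg\varphi\|\neq 0$ by (b), so passing to the Boolean-valued semantics (or forcing over a countable transitive model) there is a generic $G$ through this element; then $b\in G$ but $V_{\mathbb B}[G]\vDash\neg\varphi$, contradicting $b\Vdash\varphi$. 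Items (f) and (h) are then purely formal: (f) holds since $1\Vdash(\varphi\Rightarrow\psi)$ means $\|\varphi\|'\vee\|\psi\|=1$, which in a Boolean algebra is equivalent to $\|\varphi\|\leq\|\psi\|$; and (h) follows by taking $b=\|\varphi\|$, which lies in $G$ by the truth lemma and satisfies $b\Vdash\varphi$ by (e). Item (g) is soundness: $V_{\mathbb B}[G]$ is a model of ZFC, so any ZFC-provable $\varphi$ holds in every extension, and thus $1\Vdash\varphi(\tau)$.

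The remaining genuine obstacle is (i), the Maximum Principle, whose content is that the supremum $\bigvee_{\tau}\|\varphi(\tau)\|=\|\exists x\,\varphi(x)\|=1$ is actually attained by a single name. I would prove this by a mixing argument: using Zorn's lemma, choose a maximal antichain $\{b_i:i\in I\}$ in ${\mathbb B}^+$ together with names $\tau_i$ such that $b_i\leq\|\varphi(\tau_i)\|$, and then mix the $\tau_i$ along this antichain into one name $\tau$ with $\|\tau=\tau_i\|\geq b_i$ for each $i$; the substitution law then gives $\|\varphi(\tau)\|\geq\bigvee_i b_i=1$. The only delicate points are the maximality of the antichain (ensuring $\bigvee_i b_i=1$) and the well-definedness of the mixed name, both standard. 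In sum, (a)--(d) and (f)--(h) are bookkeeping over the structure of ${\mathbb B}$ once (e) is in hand, while the two substantial ingredients are the Forcing Theorem underlying (e) and the mixing construction underlying (i).
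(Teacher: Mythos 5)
The paper offers no proof of this fact at all—it is stated as background and explicitly deferred to Jech \cite{Jec97}—and your sketch is a faithful reconstruction of the standard arguments given there: the recursion clauses for (a)--(d), the truth lemma plus genericity for (e)--(h), and mixing along a maximal antichain for the Maximum Principle (i). Your proposal is correct (including the honest flag that the converse of (e) needs a generic filter through $b\wedge\|\neg\varphi\|$, i.e.\ the countable-transitive-model or Boolean-valued caveat), so there is nothing in the paper to compare it against and nothing to fix.
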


\begin{fct}\label{T1803}~

(a) $b\Vdash \phi(\tau_1, \ldots, \tau_n)\mbox { iff }b \leq \|
\phi(\tau_1, \ldots, \tau_n)\|.$

(b) $1\Vdash \phi\Rightarrow\psi\mbox { iff }\| \phi\| \leq\|
\psi\|.$

(c) $V_{\mathbb B}[G]\vDash \phi\mbox { iff }\|\phi \| \in G.$
(Forcing Theorem)

(d) $V_{\mathbb B}[G]\vDash \phi\mbox { iff }\exists p\in G\,
p\Vdash \phi.$

(e) If $1 \Vdash \exists x \, \phi(x)$, then $1\Vdash \phi(\tau)$
for some $\tau\in V^{\mathbb B}$ (The Maximum Principle).
\end{fct}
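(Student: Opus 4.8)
The plan is to derive all five parts from the already-cited Fact~\ref{T719}, since items (a), (b) and (e) are verbatim restatements of Fact~\ref{T719}(e), (f) and (i) respectively; for these I would simply invoke the corresponding clause. The only genuine content lies in the Forcing Theorem (c), and item (d) will follow from it by a short formal argument.

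For (c) I would argue in two directions. For the implication $\|\phi\|\in G \Rightarrow V_{\mathbb B}[G]\vDash\phi$, put $b=\|\phi\|$; then trivially $b\leq\|\phi\|$, so Fact~\ref{T719}(e) yields $b\Vdash\phi$, and since $b\in G$ the defining property of the relation $\Vdash$ gives $V_{\mathbb B}[G]\vDash\phi$. For the converse I would proceed by contradiction: assume $V_{\mathbb B}[G]\vDash\phi$ but $\|\phi\|\notin G$. Here the key structural input is that a $\mathbb B$-generic filter $G$ over $V$ is in fact an ultrafilter on the complete Boolean algebra $\mathbb B$: for every $b$ the set $\{c\in\mathbb B^+ : c\leq b \mbox{ or } c\leq b'\}$ is dense (given $d\in\mathbb B^+$, one of $d\wedge b$, $d\wedge b'$ is nonzero and lies below $d$), so $G$ meets it and, being upward closed, contains $b$ or $b'$. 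Thus $\|\phi\|\notin G$ forces $\|\phi\|'\in G$, which by Fact~\ref{T719}(b) reads $\|\neg\phi\|\in G$; applying the already-established direction to $\neg\phi$ gives $V_{\mathbb B}[G]\vDash\neg\phi$, contradicting $V_{\mathbb B}[G]\vDash\phi$.

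Item (d) then follows purely formally. If some $p\in G$ satisfies $p\Vdash\phi$, the definition of $\Vdash$ immediately gives $V_{\mathbb B}[G]\vDash\phi$. Conversely, if $V_{\mathbb B}[G]\vDash\phi$, then (c) yields $\|\phi\|\in G$, and taking $p=\|\phi\|$ we have $p\leq\|\phi\|$, so $p\Vdash\phi$ by Fact~\ref{T719}(e); hence such a $p\in G$ exists.

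The main—and really the only—obstacle is the forward direction of (c), and even that reduces to the standard observation that genericity upgrades $G$ to an ultrafilter; once this is in hand the argument is a brief syntactic manipulation of Boolean values against Fact~\ref{T719}. Everything else is bookkeeping: (a), (b), (e) are immediate, and (d) is a two-line consequence of (c) and (a).
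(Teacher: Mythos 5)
Your proposal is correct. Note, however, that the paper offers no proof of Fact~\ref{T1803} at all: it is presented as a standard fact of the forcing machinery, covered by the remark preceding Fact~\ref{T719} that proofs can be found in Jech's book (and indeed items (a), (b), (e) literally duplicate Fact~\ref{T719}(e), (f), (i)). What you supply is a genuine self-contained derivation, and it is sound: the one nontrivial input beyond Fact~\ref{T719} is your observation that a $\mathbb B$-generic filter is an ultrafilter, obtained from density of $\{c\in\mathbb B^+ : c\leq b \mbox{ or } c\leq b'\}$ (which holds since $d=(d\wedge b)\vee(d\wedge b')$ forces one of the two meets to be nonzero); this correctly yields $\|\phi\|\notin G\Rightarrow\|\neg\phi\|\in G$ and hence the hard direction of the Forcing Theorem (c), with (d) then following formally exactly as you say. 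The only caveat worth recording is that your argument leans on the paper's semantic definition of $\Vdash$ (quantification over all generic filters) together with Fact~\ref{T719}(e), which is itself the deep half of the forcing theorem; so your derivation relocates rather than eliminates the substantive content, which is consistent with how the paper treats these statements as imported background.
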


%
%

\section{Generating the topology  ${\mathcal O}^\uparrow$}

 Let $\lambda_{ls}:
{\mathbb B}^\omega  \rightarrow P({\mathbb B})$ be the convergence on a complete Boolean algebra ${\mathbb B}$ defined by

$$\lambda_{ls}(x) = \{\limsup x\}.$$

Obviously,  $\lambda_{ls}$  satisfies condition (L1).

For the sequence    $x=\langle 1,0,1,0,1,0, \ldots\rangle $  in
${\mathbb B}$ and its subsequence $y$  defined by $y=\langle
0,0,0,\ldots\rangle $  we have  $\limsup x=1$ and $\limsup y=0$. So,
$\lambda_{ls}(x)\not \subset \lambda_{ls}(y)$, hence
$\lambda_{ls}$  does not satisfy (L2).

Therefore, we close the convergence $\lambda_{ls}$ under (L2) (see
Fact \ref{T1247}).

\begin{thr}\label{T4003s}
For each $y \in \mathbb{B}^\omega$ there holds
 $$\bar{\lambda}_{ls}(y)=(\limsup y
)\uparrow=\| |\tau_y|=\check{\omega}\|\uparrow.$$
\end{thr}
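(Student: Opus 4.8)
The plan is to prove the two equalities separately, starting from the explicit description of $\bar\lambda_{ls}$ provided by Fact \ref{T1247}(b). Since $\lambda_{ls}$ satisfies (L1), that fact gives
$$\bar\lambda_{ls}(y)=\{\limsup x : x\in\mathbb{B}^\omega \text{ and } y=x\circ f \text{ for some } f\in\omega^{\uparrow\omega}\},$$
so the first equality amounts to showing that the set of limit superiors of all sequences $x$ having $y$ as a subsequence is exactly $(\limsup y)\upar$.

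For the inclusion $\bar\lambda_{ls}(y)\subseteq(\limsup y)\upar$ I would use the elementary monotonicity of $\limsup$ under passing to subsequences: if $y=x\circ f$ with $f\in\omega^{\uparrow\omega}$, then $f(m)\geq m$ for all $m$, hence $\{f(m):m\geq n\}\subseteq\{j:j\geq n\}$ and $\bigvee_{m\geq n}y_m\leq\bigvee_{j\geq n}x_j$ for each $n$; taking the meet over $n$ yields $\limsup y\leq\limsup x$, that is $\limsup x\in(\limsup y)\upar$.

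The converse inclusion is the step that needs an actual construction, and I expect it to be the main point. Given any $b\geq\limsup y$, define the interleaved sequence $x$ by $x_{2k}=y_k$ and $x_{2k+1}=b$. Then $y=x\circ g$ for $g(k)=2k$, so $y\prec x$ and $\limsup x\in\bar\lambda_{ls}(y)$. To compute $\limsup x$, note that for each $n$ the tail $\{x_m:m\geq n\}$ contains infinitely many copies of $b$ together with a tail of $y$, so $\bigvee_{m\geq n}x_m=b\vee\bigvee_{k\geq\lceil n/2\rceil}y_k$; as $n$ ranges over $\omega$ the values $\lceil n/2\rceil$ exhaust $\omega$, and by the infinite distributive law $b\vee\bigwedge_j d_j=\bigwedge_j(b\vee d_j)$, valid in every complete Boolean algebra, we get $\limsup x=b\vee\limsup y=b$, using $\limsup y\leq b$. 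Hence $b\in\bar\lambda_{ls}(y)$, completing the first equality. The delicate point here is obtaining $\limsup x=b$ exactly rather than merely $\limsup x\geq b$; this is precisely where distributivity of join over arbitrary meet is essential.

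For the second equality I would pass to the forcing interpretation. Recall that the canonical name $\tau_y=\{\langle\check n,y_n\rangle:n\in\omega\}$ satisfies $\|\check n\in\tau_y\|=y_n$ and $(\tau_y)_G=\{n\in\omega:y_n\in G\}$, a subset of $\omega$; thus $|\tau_y|=\check\omega$ expresses exactly that $\tau_y$ is infinite, that is $\forall k\,\exists n\geq k\,(\check n\in\tau_y)$. Computing the Boolean value by the rules of Fact \ref{T719} (using (d) for the bounded universal quantifier and the dual rule for the bounded existential) together with the membership values above gives
$$\big\||\tau_y|=\check\omega\big\|=\bigwedge_{k\in\omega}\bigvee_{n\geq k}\|\check n\in\tau_y\|=\bigwedge_{k\in\omega}\bigvee_{n\geq k}y_n=\limsup y.$$
Taking upward closures yields $\||\tau_y|=\check\omega\|\upar=(\limsup y)\upar$, which together with the first equality proves the theorem.
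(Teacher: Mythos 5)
Your proof is correct and follows essentially the same route as the paper: the same appeal to Fact \ref{T1247}(b), the same interleaving construction $x=\langle y_0,b,y_1,b,\dots\rangle$ with the distributivity computation $\limsup x=b\vee\limsup y=b$ for the nontrivial inclusion. The only difference is that you prove directly the two ingredients the paper cites from \cite{KuPa07} (monotonicity of $\limsup$ under passing to subsequences, and the identity $\limsup y=\| |\tau_y|=\check{\omega}\|$), which makes your argument self-contained but not substantively different.
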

\begin{proof} Let $y=\langle y_n:n\in \omega\rangle $ be a sequence
in ${\mathbb B}$. The closure of $\lambda_{ls}$ under (L2),
according to Fact \ref{T1247}, is defined by
$$\bar{\lambda}_{ls}(y)=\bigcup\nolimits_{x\in {\mathbb B}^\omega,
f\in \omega^{\uparrow \omega}, y=x\circ f}\lambda_{ls}(x).$$

According to \cite[Lemma 2, a)]{KuPa07}, we have that $\limsup y=\| |
\tau_y| =\check{\omega}\|$. Therefore, it remains  to be proved that
$\bar{\lambda}_{ls}(y)=(\limsup y )\uparrow$.

($\subset$) Let $b \in \bar{\lambda}_{ls}(y)$. Then there exists a
sequence $x$ in ${\mathbb B}$ and a function $f\in \omega^{\uparrow
\omega}$ such that $y = x\circ f$ and $b \in \lambda_{ls}(x)$.
Therefore  $b=\limsup x$, and, from \cite[Lemma 2, a)]{KuPa07}, it follows
$\limsup y \leq \limsup x =b$, so $b \in (\limsup y) \uparrow$.

($\supset$) Let $x$ be a sequence in ${\mathbb B}$ and $b\geq
\limsup x$. Let us prove that $b \in \bar{\lambda}_{ls}(x)$. Let us
define the sequence $y$ in ${\mathbb B}$  by $y=\langle
x_0,b,x_1,b,x_2,\ldots\rangle$. Then

$$\limsup y=\textstyle  \bigwedge_{k\in \omega} \bigvee_{n \geq k} y_n
=\textstyle  \bigwedge_{k\in \omega}(b \vee \bigvee_{n \geq\frac k2
} x_{n}) =\textstyle  b \vee \bigwedge_{k\in \omega}\bigvee_{n \geq
\frac k2} x_{n} =$$ $$= \textstyle  b \vee \bigwedge_{k\in
\omega}\bigvee_{n \geq k} x_{n} = b \vee \limsup x = b.$$

For $f(k)=2k$ there holds $x=y \circ f$. Hence $b\in
\bar{\lambda}_{ls}(x)$. \end{proof}


Using this result  as a motivation, instead of
 $\bar{\lambda}_{ls}$ we will  write
 $\lambda^\uparrow$. According to Fact
\ref{T1247}, the topology generated by the convergence $\lambda_{ls}$
coincides with the topology generated by the convergence
${\lambda}^\uparrow$. Further on, this topology will be denoted by
 ${\mathcal O}^\uparrow$ and
the corresponding family of closed sets by ${\mathcal F}^\uparrow$.

\subsection{Topological convergence in $\langle \mathbb{B}, {\mathcal
O}^\uparrow\rangle $}

\label{S41}

Using the convergence $\lambda^\uparrow$ defined on a
c.B.a.\ ${\mathbb B}$ we have obtained the  topological space
$\langle {\mathbb B}, {\mathcal O}^\uparrow\rangle $ with the a
posteriori limit $\lim_{{\mathcal O}^\uparrow}$, which will be in
this section  briefly denoted by $\Lim$. In this section we will
investigate it on an arbitrary complete Boolean algebra.

\begin{thr}\label{T4113}
In the space $\langle {\mathbb B}, {\mathcal O}^{\uparrow}\rangle $
we have
\begin{eqnarray*} {\rm (a)}~
\Lim x & = &\textstyle  \bigcap \left \{F \in {\mathcal F}^\uparrow
: x_n \in F \mbox{
for infinitely many  } n \in \omega \right\} ~~~~~~\\
& = &\textstyle \bigcap_{f \in \omega^{\uparrow \omega}}
\cl_{\omega_1}\left( \left \{
x_{f(n)}:n\in \omega \right \} \right )\\
&=&\textstyle \bigcap_{A \in [\omega]^{ \omega}}
\cl_{\omega_1}\left( \left \{ x_n:n\in A \right \} \right)
\end{eqnarray*}

(b) $(\limsup x)\uparrow \subset \Lim x \subset \cl_{\omega_1}\left(
\left \{ x_n:n\in \omega \right \} \right)$
\end{thr}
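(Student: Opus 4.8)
The plan is to read off both parts from the general facts already established, specializing them to the space $\langle \mathbb{B}, {\mathcal O}^\uparrow\rangle$. The single identification that makes everything work is this: by Fact \ref{T1206} applied to the convergence $\lambda^\uparrow$ (which satisfies (L1) and (L2)), the operator $\cl_{\omega_1}=u_{\lambda^\uparrow}^{\omega_1}$ is precisely the topological closure in ${\mathcal O}^\uparrow$. Recall also that ${\mathcal O}_{\lambda^\uparrow}={\mathcal O}_{\lambda_{ls}}={\mathcal O}^\uparrow$ by Fact \ref{T1247}(e), and that ${\mathcal F}^\uparrow$ denotes the closed sets of this topology. With these identifications in hand, the theorem becomes an assembly exercise.

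For part (a) I would simply invoke Fact \ref{T1001}(a), which holds in any topological space and expresses $\Lim x$ as $\bigcap\{F\in{\mathcal F}: x_n\in F \text{ for infinitely many } n\}$, and equally as $\bigcap_{A\in[\omega]^\omega}\overline{\{x_n:n\in A\}}$ and as $\bigcap_{f\in\omega^{\uparrow\omega}}\overline{\{x_{f(n)}:n\in\omega\}}$. Substituting $\overline{\phantom{A}\cdot\phantom{A}}=\cl_{\omega_1}(\cdot)$ and ${\mathcal F}={\mathcal F}^\uparrow$ reproduces the three displayed expressions verbatim; the last two indexings, over $[\omega]^\omega$ and over $\omega^{\uparrow\omega}$, are written in the opposite order but range over the same family of subsequences, so no computation beyond the identification of the closure operator is needed.

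For the right inclusion in part (b), I would specialize the formula of (a) to the single index $A=\omega$ (equivalently $f=\mathrm{id}_\omega$): since $\Lim x$ is an intersection over all $A\in[\omega]^\omega$, it is contained in the term $\cl_{\omega_1}(\{x_n:n\in\omega\})$; this is also exactly the inclusion $\lim x\subset\overline{\{x_n:n\in\omega\}}$ of Fact \ref{T1001}(c). For the left inclusion I would combine two earlier results: Theorem \ref{T4003s} gives $\lambda^\uparrow(x)=(\limsup x)\uparrow$, while Fact \ref{T1247}(d) gives $\bar{\lambda}_{ls}\leq\lim_{{\mathcal O}_{\lambda_{ls}}}$ (here $\lambda_{ls}$ already satisfies (L1), so its (L1)-closure is itself and the chain of that fact reduces to $\bar{\lambda}_{ls}\leq\lim_{{\mathcal O}_{\lambda_{ls}}}$). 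Since $\bar{\lambda}_{ls}=\lambda^\uparrow$ and ${\mathcal O}_{\lambda_{ls}}={\mathcal O}^\uparrow$, this reads $\lambda^\uparrow\leq\Lim$, whence $(\limsup x)\uparrow=\lambda^\uparrow(x)\subset\Lim x$.

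I do not expect a genuine obstacle, since the whole statement is a corollary of Facts \ref{T1001}, \ref{T1206}, \ref{T1247} together with Theorem \ref{T4003s}. The one point requiring care is interpretive rather than computational: the closure operator in all three formulas of (a) must be read consistently as $\cl_{\omega_1}$ computed from the (L1)+(L2)-convergence $\lambda^\uparrow$, and not from $\lambda_{ls}$ itself, which fails (L2); only then does Fact \ref{T1206} legitimately identify $\cl_{\omega_1}$ with the topological closure and justify the substitution into Fact \ref{T1001}. Keeping that distinction explicit is the sole subtlety of the argument.
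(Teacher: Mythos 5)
Your proposal is correct and follows essentially the same route as the paper, which simply cites Fact \ref{T1001} (a) and (b) together with the identification of $\cl_{\omega_1}$ as the closure operator; your extra care in justifying the left inclusion of (b) via Theorem \ref{T4003s} and Fact \ref{T1247}(d) (i.e.\ $\lambda^\uparrow\leq\Lim$) is a sound and arguably cleaner substitute for the paper's appeal to Fact \ref{T1001}(b).
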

\begin{proof} It follows directly from  Fact \ref{T1001} (a) and (b).
\end{proof}

\begin{con}\label{T4113a} The  limit  $\lim$ of a sequence is an
upward closed set, i.e. $\lim x =\lim x \uparrow$.
\end{con}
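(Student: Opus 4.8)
The plan is to derive the statement from a single structural feature of the topology $\mathcal{O}^\uparrow$: every closed set is automatically upward closed. Once that is in place, the corollary follows immediately because $\Lim x$ is closed (Fact \ref{T1001}(b)).

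First I would recall, via Fact \ref{T2403}, that a set $A\subset \mathbb{B}$ is closed in $\mathcal{O}^\uparrow$ exactly when $A=u(A)$, where $u=u_{\lambda^\uparrow}$ is the sequential closure operator determined by $\lambda^\uparrow=\bar\lambda_{ls}$; this is legitimate since $\bar\lambda_{ls}$ satisfies (L1) and (L2) (Fact \ref{T1247}(b)) and since $\mathcal{O}_{\lambda^\uparrow}=\mathcal{O}^\uparrow$ (Fact \ref{T1247}(e)), so $\lambda^\uparrow$ and $\lambda_{ls}$ determine the same topology and hence the same closed sets. The key observation is then that $u(A)$ is upward closed for \emph{every} $A$: by definition $u(A)=\bigcup_{y\in A^\omega}\lambda^\uparrow(y)$, and by Theorem \ref{T4003s} each summand equals $(\limsup y)\uparrow$, which is an upward closed set. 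An arbitrary union of upward closed sets is upward closed, so $u(A)$ is upward closed.

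Consequently any closed set $F$, which satisfies $F=u(F)$, equals an upward closed set and is therefore upward closed. Applying this to $F=\Lim x$ yields $\Lim x=\Lim x\uparrow$, as required. As an alternative route one can read the conclusion directly off Theorem \ref{T4113}(a): there $\Lim x$ is written as an intersection of sets $\cl_{\omega_1}(S)$, each of which is a closure (Fact \ref{T1206}) and hence a closed set, thus upward closed by the previous step, and an intersection of upward closed sets is again upward closed.

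I do not anticipate a real obstacle, as everything reduces to the fact that the values of $\lambda^\uparrow$ are the upper cones $(\limsup y)\uparrow$. The only point demanding mild care is the bookkeeping about which convergence the operator $u$ refers to: one must take $u=u_{\lambda^\uparrow}$ and explicitly invoke Fact \ref{T1247}(e) so as to guarantee that $\lambda^\uparrow$ and $\lambda_{ls}$ yield the same family of closed sets $\mathcal{F}^\uparrow$ in $\mathcal{O}^\uparrow$.
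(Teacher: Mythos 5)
Your proposal is correct and follows essentially the same route as the paper: the paper also observes that $\Lim x$ is closed by Fact \ref{T1001}(b) and then invokes the fact that every closed set in $\langle \mathbb{B},\mathcal{O}^\uparrow\rangle$ is upward closed (Theorem \ref{T4105}), whose relevant implication is proved exactly by your argument that $u(A)=\bigcup_{y\in A^\omega}(\limsup y)\uparrow$ is always upward closed. The only difference is that you inline that argument rather than citing the theorem.
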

\begin{proof} According to Fact \ref{T1001} (b), the a posteriori
limit of a sequence is always a closed set, and, by Theorem
\ref{T4105}, it is upward closed. \end{proof}

\begin{ex}\label{EX4103}For a constant sequence $\langle b: n \in \omega\rangle
$ we  have $\Lim \langle b\rangle =b\uparrow$. If $x=\langle
a,b,a,b,\ldots\rangle $, then $\Lim x=(a \vee b )\uparrow$. This
will follow from the following, more general, theorem.
\end{ex}

If $x\in {\mathbb B}^\omega $, then $\tau
_x= \{ \langle \check{n}, x_n \rangle : n\in \omega \}$ is the
corresponding ${\mathbb B}$-name for a subset of $\omega$ and, by
Lemmas 2 and 6 of \cite{KuPa07},
\begin{eqnarray*}
\liminf x & = & \| \check{\omega} \subset ^* \tau_x \| ;\\
\limsup x & = & \| |\tau_x |=\check{\omega}\| ;\\
a_x       & = & \|\forall A\in(([\omega]^\omega)^V)^{\check{~}}\, \exists B\in(([A]^\omega)^V)^{\check{~}} \,
                B\subset^*\tau_x \| ;\\
b_x       & = & \|\exists A\in(([\omega]^\omega)^V)^{\check{~}}\, \forall B\in(([A]^\omega)^V)^{\check{~}} \,
                |\tau_x\cap B|=\check{\omega}\| .
\end{eqnarray*}
\begin{lem}\rm\label{T1264}
Let ${\mathbb B}$ be a complete Boolean algebra and $x$ a sequence in ${\mathbb B}$. Then

(a) $\liminf x \leq a_x \leq b_x \leq \limsup x$;

(b) If ${\mathbb B}$ is $(\omega , 2)$-distributive, then
    $a_x= \liminf x$ and $b_x=\limsup x$;

(c) $b_x=\bigvee_{y\prec x}\bigwedge_{z\prec y} \bigvee_{m \in \omega}z_m$.
\end{lem}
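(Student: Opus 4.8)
The plan is to reduce all three parts to computations of Boolean values, using throughout the bridge ``$1\Vdash\varphi\Rightarrow\psi$ iff $\|\varphi\|\le\|\psi\|$'' (Fact \ref{T1803}(b), equivalently Fact \ref{T719}(f)) together with the representations of $\liminf x$, $\limsup x$, $a_x$, $b_x$ as Boolean values recorded just before the statement. So each inequality or equality between these four quantities will be obtained by exhibiting the corresponding implication or equivalence between their defining formulas and checking that it is forced by $1$, i.e.\ that it holds in every generic extension $V_{\mathbb B}[G]$.

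For (a) I would verify that each of the three inequalities comes from an implication between the defining formulas valid in every $V_{\mathbb B}[G]$. For $\liminf x\le a_x$: if $\check\omega\subset^*\tau_x$, then given any ground-model infinite $A$ one takes $B=A$, since $A\setminus\tau_x\subseteq\omega\setminus\tau_x$ is finite, which witnesses the $a_x$-formula. For $b_x\le\limsup x$: a witness $A$ for the $b_x$-formula gives, by taking $B=A$, that $|\tau_x\cap A|=\omega$, so $\tau_x$ is infinite. The only slightly less obvious step is $a_x\le b_x$: applying the $a_x$-formula to $A=\omega$ produces a ground-model infinite $B_0\subset^*\tau_x$, and then $B_0$ itself witnesses the $b_x$-formula, because every ground-model infinite $B\subseteq B_0$ has $B\setminus\tau_x\subseteq B_0\setminus\tau_x$ finite, so $\tau_x\cap B$ is infinite. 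Each implication is a plain argument about the ground-model family $([\omega]^\omega)^V$ (which contains $\omega$ and is closed under passing to infinite subsets of its members), hence holds in all extensions and is forced by $1$.

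For (b) the key input is the well-known characterization that $(\omega,2)$-distributivity of $\mathbb B$ is equivalent to $\mathbb B$ adding no new subsets of $\omega$, i.e.\ $1\Vdash(([\omega]^\omega)^V)^{\check{~}}=[\omega]^\omega$, and likewise $(([A]^\omega)^V)^{\check{~}}=[A]^\omega$ for ground-model $A$. Under this hypothesis the relativized quantifiers in the defining formulas of $a_x$ and $b_x$ collapse to genuine quantifiers over all infinite subsets of $\omega$. It then suffices to observe the ZFC equivalences, for arbitrary $T\subseteq\omega$: $(\forall A\in[\omega]^\omega\;\exists B\in[A]^\omega\; B\subset^*T)$ holds iff $T$ is cofinite (for the nontrivial direction, if $\omega\setminus T$ is infinite take $A=\omega\setminus T$, which admits no $B\subset^*T$); and $(\exists A\in[\omega]^\omega\;\forall B\in[A]^\omega\;|T\cap B|=\omega)$ holds iff $T$ is infinite (take $A=T$). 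Forcing these equivalences yields $a_x=\|\check\omega\subset^*\tau_x\|=\liminf x$ and $b_x=\||\tau_x|=\check\omega\|=\limsup x$, which together with (a) is the claim.

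Part (c) is the purely algebraic computation and I expect it to carry the main technical weight. First I would convert the quantifiers over check-names into lattice operations via Fact \ref{T719}(d) and its dual, obtaining $b_x=\bigvee_{A\in([\omega]^\omega)^V}\bigwedge_{B\in([A]^\omega)^V}\||\tau_x\cap\check B|=\check\omega\|$. Next, for a fixed ground-model infinite $B$ enumerated increasingly by $g$, the set $\tau_x\cap\check B$ is, up to the reindexing bijection $g$, exactly the name $\tau_z$ of the subsequence $z=x\circ g$ (since $\|\check\imath\in\tau_z\|=z_i=x_{g(i)}=\|\check{g(i)}\in\tau_x\|$ and $g$ preserves infiniteness of the intersection), whence $\||\tau_x\cap\check B|=\check\omega\|=\||\tau_z|=\check\omega\|=\limsup z$. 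Identifying infinite subsets $A\subseteq\omega$ with strictly increasing enumerations and subsets $B\subseteq A$ with subsequences, this gives $b_x=\bigvee_{y\prec x}\bigwedge_{z\prec y}\limsup z$. It then remains to prove the distributivity-free identity $\bigwedge_{z\prec y}\bigvee_{m\in\omega}z_m=\bigwedge_{z\prec y}\limsup z$: the inequality $\ge$ is immediate from $\bigvee_m z_m\ge\limsup z$, while for $\le$ one notes that for each $z\prec y$ and each $k$ the tail $\langle z_m:m\ge k\rangle$ is again a subsequence of $y$, so the left-hand side is bounded by $\bigvee_{m\ge k}z_m$; taking the meet over $k$ bounds it by $\limsup z$, and then the meet over $z$. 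Substituting this identity yields the stated formula. The delicate points will be the careful matching of ground-model subsets with subsequences and the verification that $\tau_x\cap\check B$ really computes as $\tau_z$; the identity above, linking the full join $\bigvee_m z_m$ to $\limsup$, is the crux and is where I would focus.
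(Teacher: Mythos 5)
Your proof is correct. Note that the paper itself gives no proof of Lemma \ref{T1264} --- it is imported from \cite{KuPa07} (the Boolean-value representations of $\liminf x$, $\limsup x$, $a_x$, $b_x$ are attributed to Lemmas 2 and 6 there) --- so there is no in-paper argument to compare against; but your route is exactly the one the paper's machinery is set up for. All three parts check out: the implications in (a) are valid in every extension and transfer via Fact \ref{T719}(f); in (b) the collapse of the relativized quantifiers under ``no new reals'' (which is the (c)$\Leftrightarrow$(d) content of Theorem \ref{T1280}) together with your two ZFC equivalences ($T$ cofinite, resp.\ $T$ infinite) is right, including the small point that the witness $A=T$ is a ground-model set precisely because no new reals are added; and in (c) your key identity $\||\tau_x\cap\check B|=\check\omega\|=\limsup x\circ g$ is literally Lemma \ref{T1287}(a) of the paper, so you could cite it rather than rederive it. The closing lattice identity $\bigwedge_{z\prec y}\bigvee_m z_m=\bigwedge_{z\prec y}\limsup z$, proved by passing to tails, is the genuinely nontrivial step and your argument for it is complete.
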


\begin{thr}\label{T4114}
Let $x=\langle x_n:n \in \omega\rangle $ be a sequence with finite
range, i.e. the set $\{x_n: n \in \omega\}$ is finite and $C=\{b:
x_n=b \mbox{ for infinitely many  } n \in \omega\}$.  Then
$$\Lim \langle x_n \rangle  =(\bigvee C)\uparrow$$
\end{thr}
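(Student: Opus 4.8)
The plan is to prove the two inclusions of $\Lim x = (\bigvee C)\uparrow$ separately, the key preliminary observation being that the finite-range hypothesis collapses $\limsup x$ to $\bigvee C$. So I would first verify that $\limsup x = \bigvee C$. Let $R = \{x_n : n \in \omega\}$ be the (finite) range of $x$. Each value $b \in R \setminus C$ occurs only finitely often, so some $k_b$ satisfies $x_n \ne b$ for all $n \ge k_b$; since $R$ is finite, $K := \max\{k_b : b \in R \setminus C\}$ exists. For every $k \ge K$ each term $x_n$ with $n \ge k$ lies in $C$, and conversely each $b \in C$ still occurs at some index $\ge k$; hence $\{x_n : n \ge k\} = C$ and $\bigvee_{n \ge k} x_n = \bigvee C$ for all $k \ge K$. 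As the tail joins $\bigvee_{n \ge k} x_n$ are non-increasing in $k$, their meet is $\bigvee C$, i.e.\ $\limsup x = \bigvee C$. (Note also that $C \ne \emptyset$ by the pigeonhole principle, so $\bigvee C$ is a genuine join of a nonempty set.)

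The inclusion $(\bigvee C)\uparrow \subset \Lim x$ is then immediate from Theorem \ref{T4113}(b), which gives $(\limsup x)\uparrow \subset \Lim x$; substituting $\limsup x = \bigvee C$ finishes this direction.

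For the reverse inclusion I would test $\Lim x$ against the principal up-set of each value in $C$. First, each $b\uparrow$ is closed: by Theorem \ref{T4003s} any sequence $y$ in $b\uparrow$ has $\lambda^\uparrow(y) = (\limsup y)\uparrow$, and $y_n \ge b$ for all $n$ forces $\limsup y \ge b$, so $\lambda^\uparrow(y) \subset b\uparrow$; by Fact \ref{T2403}(c) this makes $b\uparrow$ closed, i.e.\ $b\uparrow \in \mathcal{F}^\uparrow$ (this is exactly what makes the special cases in Example \ref{EX4103} work, and is established here without circularity). Now fix $b \in C$. Since $x_n = b$ for infinitely many $n$, the set $b\uparrow$ is one of the closed sets containing $x_n$ for infinitely many $n$, so the first equality of Theorem \ref{T4113}(a) yields $\Lim x \subset b\uparrow$. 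Intersecting over all $b \in C$ gives $\Lim x \subset \bigcap_{b \in C} b\uparrow$, and since $\bigcap_{b \in C} b\uparrow$ is precisely the set of upper bounds of $C$, it equals $(\bigvee C)\uparrow$. Combining the two inclusions proves the theorem.

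I do not expect a serious obstacle; the only step requiring attention is the identity $\limsup x = \bigvee C$, which rests on the elementary fact that finitely-occurring values drop out of all sufficiently late tail joins. The conceptual point is that each $b \in C$ supplies a closed set $b\uparrow$ containing the limit, pinning $\Lim x$ down to within $(\bigvee C)\uparrow$, while Theorem \ref{T4113}(b) supplies the matching reverse containment.
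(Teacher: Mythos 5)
Your proof is correct and, for the nontrivial inclusion $\Lim x\subset(\bigvee C)\uparrow$, follows essentially the same route as the paper: each $c\uparrow$ with $c\in C$ is a closed set containing infinitely many terms of $x$, so Theorem \ref{T4113}(a) pins $\Lim x$ inside $\bigcap_{c\in C}c\uparrow=(\bigvee C)\uparrow$. You are in fact more complete than the paper, which never explicitly establishes the reverse inclusion; your computation $\limsup x=\bigvee C$ combined with Theorem \ref{T4113}(b) supplies exactly the missing half.
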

\begin{proof} Let $a=\bigvee C$. For each $c \in C$, the set $c
\uparrow$ is closed and contains infinitely many members of the
sequence $x$. Since $\bigcap_{c \in C}(c \uparrow)=\bigvee C
\uparrow$ is a closed set and, according to Theorem \ref{T4113},
there holds  $\lim \langle x_n\rangle \subset \bigvee C \uparrow= a
\uparrow$. Let us suppose that there exists $d \in \lim \langle
x_n\rangle \setminus (a \uparrow)$. Then we have that $a \not \leq
d$, which implies that $a \wedge d' \not \leq d$. But $(a \wedge
d')\uparrow $ also contains infinitely many members of the sequence,
but does not contain $d$. Therefore, by Theorem \ref{T4113}, we have
that $d \not \in \lim\langle x_n\rangle $. \end{proof}

\begin{te}  \rm \label{T1280}
For each complete Boolean algebra ${\mathbb B}$ the following conditions are equivalent:

(a) $\lambda_{\mathrm{ls}}$ is a topological convergence;

(b) $\lambda_{\mathrm{li}}$ is a topological convergence;

(c) ${\mathbb B}$ is an $(\omega,2)$-distributive algebra;

(d) Forcing by ${\mathbb B}$ does not produce new reals.
\end{te}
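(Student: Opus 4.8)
The plan is to separate the purely set-theoretic equivalence $(c)\Leftrightarrow(d)$ from the topological characterisations. The equivalence $(c)\Leftrightarrow(d)$ is the classical fact that a complete Boolean algebra is $(\omega,2)$-distributive iff forcing with it adds no new function $\check\omega\to\check 2$, i.e.\ no new reals, and I would simply quote it from \cite{Jec97}. The substance lies in tying $(a)$ and $(b)$ to $(c)$; throughout I read $(a)$ and $(b)$ as statements about the convergences $\lambda^\uparrow=\bar\lambda_{\mathrm{ls}}$ and $\lambda^\downarrow=\bar\lambda_{\mathrm{li}}$ from Fact \ref{T1247}, which by Fact \ref{T1247}(e) generate $\mathcal O^\uparrow$ and its dual. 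The first reduction is to rephrase ``$\lambda^\uparrow$ is topological'': by Fact \ref{T1201}(h) a convergence is topological iff it equals $\lim_{\mathcal O_\lambda}$, so by Theorem \ref{T4003s} the convergence $\lambda^\uparrow$ is topological iff $(\limsup x)\uparrow=\Lim x$ for every $x\in\mathbb B^\omega$. Since $(\limsup x)\uparrow\subseteq\Lim x$ always holds (Theorem \ref{T4113}(b)), only the reverse inclusion is at stake; the dual reduction for $(b)$ replaces $\limsup,\uparrow$ by $\liminf,\downarrow$.

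The bridge between topology and forcing is the identity $\Lim x=b_x\uparrow$ and, dually, that the $\liminf$-limit equals $a_x\downarrow$, where $a_x,b_x$ are the Boolean values introduced before Lemma \ref{T1264}. I would prove $\Lim x=b_x\uparrow$ by analysing the $\omega_1$-iterated sequential closure of Fact \ref{T1206} applied to subsequence ranges: Theorem \ref{T4113}(a) expresses $\Lim x$ as $\bigcap_{f}\cl_{\omega_1}(\{x_{f(n)}:n\in\omega\})$, and one shows that $d$ lies in all these closures iff $d\geq\bigvee_{y\prec x}\bigwedge_{z\prec y}\bigvee_m z_m$, which is exactly $b_x$ by Lemma \ref{T1264}(c). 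Granting this identity, $(a)$ becomes the assertion that $b_x=\limsup x$ for every $x$, and $(b)$ that $a_x=\liminf x$ for every $x$.

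With this reduction the cycle closes quickly. The implications $(c)\Rightarrow(a)$ and $(c)\Rightarrow(b)$ are immediate from Lemma \ref{T1264}(b), which yields $b_x=\limsup x$ and $a_x=\liminf x$ as soon as $\mathbb B$ is $(\omega,2)$-distributive. For the converses I would argue contrapositively through $(d)$: if $\mathbb B$ is not $(\omega,2)$-distributive there is a sequence of two-element partitions of unity $\langle u_n,u_n'\rangle$ with $\bigvee_{\sigma\in 2^\omega}\bigwedge_n u_n^{\sigma(n)}<1$ (writing $u_n^0=u_n$, $u_n^1=u_n'$), i.e.\ the name $\tau_x$ built from these partitions is a new real. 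From it I would manufacture a sequence $x$ whose range realises a cluster value $d<\limsup x$ surviving in every subsequence, so that $b_x\leq d<\limsup x$; equivalently, reading the Boolean-value formula for $b_x$ directly, $\tau_x$ is an infinite new set not captured from inside by any ground-model infinite set. This gives $\Lim x=b_x\uparrow\supsetneq(\limsup x)\uparrow=\lambda^\uparrow(x)$, so $\lambda^\uparrow$ is not topological, and the dual construction handles $(b)$.

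The main obstacle is the bridging identity $\Lim x=b_x\uparrow$; everything downstream is bookkeeping via Lemma \ref{T1264} and the classical $(c)\Leftrightarrow(d)$. I expect the inclusion $\Lim x\subseteq b_x\uparrow$ to be the delicate half, and I would establish it by induction on the closure rank $\alpha\leq\omega_1$, proving that $u^{\alpha}(\{x_{f(n)}:n\in\omega\})\subseteq b_x\uparrow$ uniformly in $f\in\omega^{\uparrow\omega}$, using that $b_x$ is by construction a subsequence-stable join and hence is not lowered by passing to sub-subsequences. The second sensitive point is the witness in the converse, where the abstract failure of the distributive law must be turned into a concrete oscillating sequence realising a persistent cluster value strictly below $\limsup x$.
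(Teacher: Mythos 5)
The paper itself gives no proof of Theorem \ref{T1280} --- it is recalled from the earlier work \cite{KuPaCZ}/\cite{KuPaNSJOM} --- so your plan can only be judged on its own terms and against the machinery the paper does develop. Your outer frame is reasonable: reading (a) as a statement about $\lambda^\uparrow=\bar{\lambda}_{ls}$ is forced (the literal $\lambda_{ls}$ fails (L2) on $\langle 1,0,1,0,\dots\rangle$ in every nontrivial algebra, so it could never be topological), quoting $(c)\Leftrightarrow(d)$ from \cite{Jec97} is legitimate, and the reduction of (a) to ``$(\limsup x)\uparrow=\Lim x$ for all $x$'' via Fact \ref{T1201}(h), Fact \ref{T1247}(e) and Theorem \ref{T4003s} is correct.

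The genuine gap is the bridging identity $\Lim x=b_x\uparrow$, on which every remaining step of your argument leans. First, the inclusion you yourself flag as delicate is attacked by a method that fails at the very first stage: you propose to prove $u^{\alpha}(\{x_{f(n)}:n\in\omega\})\subseteq b_x\uparrow$ for each fixed $f$, but already for $\alpha=0$ this is false. Take $x_n=\omega\setminus\{n\}$ in $P(\omega)$: by Lemma \ref{T1264}(b), $b_x=\limsup x=1$, so $b_x\uparrow=\{1\}$, while $\{x_{f(n)}:n\in\omega\}$ consists of coatoms. The inclusion can only hold after intersecting over all $f$, and $u$ does not commute with that intersection, so the induction does not restart. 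Second, the reverse inclusion $b_x\uparrow\subseteq\Lim x$ amounts to $b_x\in\Lim x$; closed sets in $\mathcal O^\uparrow$ are closed only under infima of countable decreasing sequences (Theorem \ref{T4105}(c)), whereas $b_x$ is a supremum of infima over continuum many subsequences, so this is not automatic either. The paper's own results signal that the identity is not free: Theorem \ref{T4120} gives only one direction unconditionally, and the two-sided characterization of $\Lim x$ in these terms (Theorem \ref{T4121}) is proved only under the additional hypothesis $(\hbar)$. Finally, the converse direction --- turning a witness to non-$(\omega,2)$-distributivity into a concrete sequence $x$ and an element $d\not\geq\limsup x$ with $\forall y\prec x\;\exists z\prec y\;\limsup z\leq d$ --- is the combinatorial core of the theorem, and you leave it unexecuted; note that for this direction you do not need the full bridge at all, only that such a $d$ lies in $(\lambda^\uparrow)^*(x)\setminus\lambda^\uparrow(x)$, which is where the real work should be concentrated.
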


\begin{lem}  \rm \label{T4116}
Let ${\mathbb B}$  be a complete Boolean algebra. Then

(a) For each $a \in {\mathbb B}$ the function
$f_a :\langle {\mathbb B},{\mathcal O_{\lambda_{\mathrm{ls}}}}\rangle \rightarrow
\langle {\mathbb B},{\mathcal O_{\lambda_{\mathrm{ls}}}}\rangle$
defined by $f_a (x)=x \wedge a$ is continuous;

(b) $\Lim_{{\mathcal O}_{\lambda_{\mathrm{ls}}}}\neq{\lambda}_{\mathrm{ls}}$ iff
there is a sequence $x$ in ${\mathbb B}$  such that
$0 \in \Lim_{{\mathcal O}_{\lambda_{\mathrm{ls}}}}(x) \setminus {\lambda}_{\mathrm{ls}}(x)$.

(c) If $x,y\in {\mathbb B}^\omega$ and $x_n \leq y_n$, for each $n \in \omega$,
then $\Lim_{{\mathcal O}_{\lambda_{\mathrm{ls}}}} (y) \subset \Lim_{{\mathcal O}_{\lambda_{\mathrm{ls}}}}(x)$.
\end{lem}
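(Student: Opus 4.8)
The common engine for all three parts is the identity
$$\textstyle \limsup\langle x_n\wedge a\rangle=a\wedge\limsup x,$$
valid in any complete Boolean algebra: infinite distributivity gives $\bigvee_{n\ge k}(x_n\wedge a)=a\wedge\bigvee_{n\ge k}x_n$, and the outer meet over $k$ commutes with $a\wedge(\cdot)$. I would prove (a) first and use it to drive (b).

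For part (a) I would invoke the standard criterion that a map of topological spaces is continuous iff it satisfies $f(\cl_{\omega_1}(A))\subseteq \cl_{\omega_1}(f(A))$ for every $A$. Writing $g_a(A)=\{s\wedge a:s\in A\}$, the first step is the one-step inclusion $g_a(u(A))\subseteq u(g_a(A))$: if $b\in u(A)$ then $b\ge\limsup y$ for some sequence $y$ in $A$, hence $b\wedge a\ge(\limsup y)\wedge a=\limsup\langle y_n\wedge a\rangle$, so $b\wedge a\in(\limsup\langle y_n\wedge a\rangle)\uparrow\subseteq u(g_a(A))$ since $\langle y_n\wedge a\rangle$ lies in $g_a(A)$. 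Using monotonicity of $u$ (Fact \ref{T2301}(c)) I would then promote this by transfinite induction along the iterates $u^\alpha$ to $g_a(u^\alpha(A))\subseteq u^\alpha(g_a(A))$ for all $\alpha\le\omega_1$ (successor step by the one-step inclusion plus monotonicity, limit step by continuity of $\bigcup$). At $\alpha=\omega_1$, since $\cl_{\omega_1}=u^{\omega_1}$ is the closure operator (Fact \ref{T1206}), this reads $f_a(\cl_{\omega_1}(A))\subseteq\cl_{\omega_1}(f_a(A))$, which is exactly continuity of $f_a$.

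Part (c) is short once one recalls that closed sets in ${\mathcal O}^\uparrow$ are upward closed (Theorem \ref{T4105}). Fix $F\in{\mathcal F}^\uparrow$ with $x_n\in F$ \mbox{ for infinitely many } $n$; since $y_n\ge x_n$ and $F=F\uparrow$, the same indices give $y_n\in F$, so $\{F\in{\mathcal F}^\uparrow:x_n\in F\mbox{ i.o.}\}\subseteq\{F\in{\mathcal F}^\uparrow:y_n\in F\mbox{ i.o.}\}$. Intersecting the larger family yields the smaller set, and by the first equality in Theorem \ref{T4113}(a) this is precisely $\Lim y\subseteq\Lim x$.

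For part (b), the direction $(\Leftarrow)$ is immediate, since such an $x$ already witnesses $\Lim(x)\ne\lambda_{ls}(x)$. For $(\Rightarrow)$ I would first observe that for the single element $0$ one has $0\in\lambda_{ls}(x)\iff\limsup x=0$, so the right-hand condition just asks for a sequence $x'$ with $0\in\Lim(x')$ and $\limsup x'\ne 0$. Since $\lambda^\uparrow(x)=(\limsup x)\uparrow\subseteq\Lim(x)$, the hypothesis $\Lim\ne\lambda^\uparrow$ supplies a sequence $x$ and a point $d\in\Lim(x)$ with $d\not\ge\ell$, where $\ell=\limsup x$. Setting $a=\ell\wedge d'$ (nonzero precisely because $d\not\ge\ell$) and applying the continuity from (a) to $x'=f_a\circ x=\langle x_n\wedge a\rangle$ gives $d\wedge a\in\Lim(x')$; but $d\wedge a=\ell\wedge d'\wedge d=0$ while $\limsup x'=\ell\wedge a=a\ne 0$, so $0\in\Lim(x')\setminus\lambda_{ls}(x')$. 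The real work sits in part (a)—the $\omega_1$-length induction carrying the one-step inequality up to the full closure—and the decisive idea in (b) is to slide an arbitrary spurious limit point $d$ down to $0$ by meeting with $a=\ell\wedge d'$, which is legitimate exactly because $f_a$ is continuous and scales $\limsup$ by $a$. The one point demanding care is the meaning of ``$\Lim\ne\lambda_{ls}$'': as $\Lim$ always returns upward-closed sets while $\lambda_{ls}$ returns singletons, the inequality must be read modulo upward closure, i.e.\ as $\Lim\ne\lambda^\uparrow$, which is the form the construction actually uses.
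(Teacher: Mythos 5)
Your proposal is correct, but the paper does not actually supply a proof of this lemma (it is imported from earlier work), so the comparison has to be made against the closely related arguments that do appear later in the text. For (c) your argument is exactly the paper's proof of Lemma~\ref{T4119}: monotonicity of $\Lim$ under pointwise domination follows from the first equality in Theorem~\ref{T4113}(a) plus the fact that closed sets are upward closed. For (a) you take a genuinely different route from the paper's Lemma~\ref{T4118} (the same statement for the same topology ${\mathcal O}^\uparrow={\mathcal O}_{\lambda_{\mathrm{ls}}}$): the paper verifies directly that $(f_a^\wedge)^{-1}[F]$ is upward closed and $\omega_1$-closed and invokes the characterization of closed sets in Theorem~\ref{T4105}, which is shorter and avoids transfinite induction; your route pushes the one-step inclusion $f_a[u(A)]\subset u(f_a[A])$ (coming from the identity $\limsup\langle x_n\wedge a\rangle=a\wedge\limsup x$) up through the iterates $u^\alpha$ to the closure operator $\cl_{\omega_1}=u^{\omega_1}$ of Fact~\ref{T1206}, and then uses the criterion $f[\,\overline{A}\,]\subset\overline{f[A]}$. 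Both are sound; the paper's version is more economical, while yours isolates the distributivity identity that also drives part (b). For (b) your argument is complete and is the standard one: continuity of $f_a$ transports a spurious limit point $d\in\Lim(x)\setminus(\limsup x)\!\uparrow$ to the spurious limit point $0$ of $\langle x_n\wedge a\rangle$ with $a=\limsup x\wedge d'>0$, and you are right that the hypothesis must be read as $\Lim\neq\lambda^\uparrow$ (equivalently, $\lambda_{\mathrm{ls}}$ is not topological), since for the point $0$ membership in $\lambda_{\mathrm{ls}}(x)$ and in $\lambda^\uparrow(x)$ coincide; this reading agrees with the restated version of the lemma given later in the paper.
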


In the forthcoming theorems we will investigate connections between
a topological convergence and $\limsup$ of a sequence.

\begin{thr}\label{T4115}
If $\limsup x={0}$ then ${0}\in \Lim x$, and therefore $\lim x
={\mathbb B}$.
\end{thr}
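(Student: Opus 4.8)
The plan is to derive both assertions directly from the lower bound on $\Lim x$ recorded in Theorem \ref{T4113}(b), together with the upward closedness of limits from Corollary \ref{T4113a}. The key observation is that in any Boolean algebra $0\uparrow=\{b\in{\mathbb B}:0\leq b\}={\mathbb B}$, so the hypothesis $\limsup x=0$ makes the set $(\limsup x)\uparrow$ as large as it can possibly be; everything else is bookkeeping.

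First I would verify $0\in\Lim x$. Since $\limsup x=0\leq 0$, we have $0\in(\limsup x)\uparrow$, and Theorem \ref{T4113}(b) gives $(\limsup x)\uparrow\subset\Lim x$; hence $0\in\Lim x$. (Equivalently, one may invoke Theorem \ref{T4003s} to write $(\limsup x)\uparrow=\lambda^\uparrow(x)$ and then use the inequality $\lambda^\uparrow\leq\lim_{{\mathcal O}^\uparrow}=\Lim$ supplied by Fact \ref{T1247}(d), which yields $0=\limsup x\in\lambda^\uparrow(x)\subset\Lim x$.)

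For the second assertion I would appeal to Corollary \ref{T4113a}, which states that $\Lim x$ is upward closed, i.e.\ $\Lim x=(\Lim x)\uparrow$. Combining this with $0\in\Lim x$ gives ${\mathbb B}=0\uparrow\subset(\Lim x)\uparrow=\Lim x$, and therefore $\Lim x={\mathbb B}$. I do not expect any genuine obstacle here: the statement is an immediate corollary of the previously established facts, the only point worth flagging being the identification $0\uparrow={\mathbb B}$, which upgrades the generic inclusion $(\limsup x)\uparrow\subset\Lim x$ into the equality $\Lim x={\mathbb B}$. In fact one can bypass Corollary \ref{T4113a} entirely and argue in a single line from Theorem \ref{T4113}(b): ${\mathbb B}=0\uparrow=(\limsup x)\uparrow\subset\Lim x\subset{\mathbb B}$, so that $\Lim x={\mathbb B}$ and in particular $0\in\Lim x$.
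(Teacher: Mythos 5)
Your proposal is correct and, especially in its final one-line form ${\mathbb B}=0\uparrow=(\limsup x)\uparrow\subset\Lim x\subset{\mathbb B}$, is exactly the paper's own argument via Theorem \ref{T4113}(b). The extra detour through Corollary \ref{T4113a} is harmless but unnecessary.
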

\begin{proof} By Theorem \ref{T4113} (b), ${\mathbb
B}=0\uparrow\subset (\limsup x)\uparrow\subset \lim x\subset
{\mathbb B}$. \end{proof}

\begin{lem}\rm \label{T1233a}
Let $x=\langle x_n: n \in \omega\rangle $ be a sequence in a c.B.a.\ ${\mathbb B}$.

(a) If $x$ is a lim\,sup-stable sequence, then in the space
$\langle {\mathbb B},{\mathcal O}_{\lambda_{\mathrm{ls}}} \rangle$ we have
\begin{equation}\label{EQ1233c} \textstyle
\overline{\{x_n:n \in \omega\}}=(\limsup x)\upar \cup \;
\bigcup_{n\in \omega} x_n\upar  ;
\end{equation}

(b) If $x$ is a lim\,inf-stable sequence, then in the space
$\langle {\mathbb B},{\mathcal O}_{\lambda_{\mathrm{li}}} \rangle$ we have
\begin{equation}\label{EQ1233f} \textstyle
\overline{\{x_n:n \in \omega\}}=(\liminf x)\downar \cup \;
\bigcup_{n\in \omega} x_n\downar  .
\end{equation}
\end{lem}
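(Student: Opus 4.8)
The plan is to prove (a) directly and then to read off (b) by Boolean duality. Write $A=\{x_n:n\in\omega\}$ and abbreviate the right-hand side of \eqref{EQ1233c} by $F=(\limsup x)\upar\cup\bigcup_{n\in\omega}x_n\upar$; note that $F$ is upward closed, as any closed set in $\mathcal{O}_{\lambda_{\mathrm{ls}}}$ must be (Theorem~\ref{T4105}). Since $A\subseteq F$ (each $x_n\in x_n\upar$), by Fact~\ref{T2403} it suffices to establish the two facts $F\subseteq\overline A$ and $u(F)=F$; then $\overline A=F$ because $\overline A$ is the smallest closed set containing $A$. The inclusion $F\subseteq\overline A$ is immediate from Theorem~\ref{T4003s}: as $x\in A^\omega$ we have $(\limsup x)\upar=\lambda^\uparrow(x)\subseteq u(A)$, and as each constant sequence $\langle x_n\rangle$ lies in $A^\omega$ we have $x_n\upar=\lambda^\uparrow(\langle x_n\rangle)\subseteq u(A)$; finally $u(A)\subseteq u^{\omega_1}(A)=\overline A$ by Fact~\ref{T1206}.

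The heart of the argument is the closedness of $F$, i.e.\ $\limsup w\in F$ for every $w\in F^\omega$ (again Fact~\ref{T2403}). Put $a=\limsup x$. If $w_m\geq a$ for infinitely many $m$, then $\bigvee_{m\geq k}w_m\geq a$ for every $k$, so $\limsup w\geq a$ and $\limsup w\in(\limsup x)\upar\subseteq F$. Otherwise, for all but finitely many $m$ we have $w_m\geq x_{n_m}$ for some $n_m\in\omega$; define $z\in A^\omega$ by $z_m=x_{n_m}$ on that cofinite set (and $z_m=x_0$ otherwise), so that $\limsup w\geq\limsup z$, and since $F$ is upward closed it is enough to place $\limsup z$ in $F$. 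Here I split on the range of $z$. If $\{x_{n_m}:m\in\omega\}$ is finite, some value $x_{n^\ast}$ is attained infinitely often, whence $\limsup z\geq\limsup\langle x_{n^\ast}\rangle=x_{n^\ast}\in F$ (cf.\ Theorem~\ref{T4114}). If that range is infinite, then $\{n_m:m\in\omega\}$ is infinite, so one can extract $m_0<m_1<\cdots$ with $n_{m_0}<n_{m_1}<\cdots$; then $\langle x_{n_{m_j}}:j\in\omega\rangle$ is simultaneously a subsequence of $z$ and a subsequence of $x$, and lim\,sup-stability of $x$ forces its limsup to equal $a$, giving $\limsup z\geq a\in F$. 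In every case $\limsup w\in F$, so $F$ is closed and $\overline A=F$.

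I expect the infinite-range subcase to be the main obstacle: a general sequence $z$ in $A$ may shuffle and repeat the terms $x_n$, and an arbitrary subsequence of $x$ only satisfies $\limsup\leq a$, which would in principle let new elements below $a$ (and outside $\bigcup_{n}x_n\upar$) slip into the closure. The hypothesis that $x$ is lim\,sup-stable — that $\limsup y=\limsup x$ for every subsequence $y\prec x$ — is exactly what rules this out, by pinning the extracted common subsequence of $x$ and $z$ at the value $a$. Everything else is bookkeeping built on Theorems~\ref{T4003s} and~\ref{T4114} and the shift-invariance of $\limsup$.

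Finally, part (b) follows by applying (a) in the dual algebra: the complementation map $b\mapsto b'$ is an order-reversing automorphism of $\mathbb B$ that exchanges $\limsup$ with $\liminf$, the operation $\upar$ with $\downar$, and the topology $\mathcal{O}_{\lambda_{\mathrm{ls}}}$ with $\mathcal{O}_{\lambda_{\mathrm{li}}}$, and it carries lim\,sup-stable sequences to lim\,inf-stable ones; transporting \eqref{EQ1233c} through this map yields \eqref{EQ1233f}.
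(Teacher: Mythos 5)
Your proof is correct. The paper states Lemma \ref{T1233a} without proof (it is one of several results quoted from the authors' earlier work), so there is no in-text argument to compare against; judged on its own, yours is sound and is the natural proof given Theorem \ref{T4105}. The set $F=(\limsup x)\uparrow\cup\bigcup_{n}x_n\uparrow$ clearly contains $\{x_n:n\in\omega\}$, is contained in every closed superset of it (each closed set is upward closed and limsup-closed), and is itself upward closed, so everything reduces to showing $\limsup w\in F$ for every $w\in F^\omega$. Your case split there is exhaustive: if $w_m\geq\limsup x$ for infinitely many $m$ you land in $(\limsup x)\uparrow$; otherwise a tail of $w$ dominates a sequence $z$ with values in $\{x_n\}$, and the finite-range subcase gives $\limsup z\geq x_{n^\ast}$ while the infinite-range subcase produces, via the extraction $m_0<m_1<\cdots$ with $n_{m_0}<n_{m_1}<\cdots$, a common subsequence of $z$ and of $x$ whose limsup is pinned at $\limsup x$ by lim\,sup-stability --- which is exactly, and only, where that hypothesis is needed. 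Part (b) by the order-reversing automorphism $b\mapsto b'$ is also fine. Two cosmetic points: $F$ is upward closed simply because it is a union of cones $c\uparrow$, not ``because closed sets must be'' (at that stage you have not yet shown $F$ is closed); and the inequality $\limsup w\geq\limsup z$ holds only from a tail onward, which is harmless since $\limsup$ is tail-invariant, as you note.
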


\begin{lm}\label{T4117}
In the Boolean algebra $P(\omega)$ there holds $${0}\in \Lim
x\Leftrightarrow  \limsup x={0}.$$
\end{lm}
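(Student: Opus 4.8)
The plan is to treat the two implications separately; the backward direction is free and the forward direction carries all the work.

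For $\limsup x = 0 \Rightarrow 0 \in \Lim x$ there is nothing to prove: this is precisely Theorem \ref{T4115}, which even gives $\Lim x = \mathbb{B}$. For the converse I would argue by contraposition, showing that $\limsup x \neq 0$ forces $0 \notin \Lim x$. The tool is the description
$$
\Lim x = \textstyle\bigcap\{F \in \mathcal{F}^\uparrow : x_n \in F \text{ for infinitely many } n\}
$$
from Theorem \ref{T4113}(a): it suffices to exhibit one closed set $F$ that contains infinitely many terms of $x$ yet omits $0$. Here the specific structure of $P(\omega)$ enters. Since $\limsup x = \bigcap_k \bigcup_{n \geq k} x_n$ is nonempty, I fix a point $m \in \limsup x$, that is, a natural number lying in $x_n$ for infinitely many $n$. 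The candidate closed set is the principal up-set of the atom $\{m\}$, namely $\{m\}\uparrow = \{b \in P(\omega) : m \in b\}$.

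Three things then need checking. First, $\{m\}\uparrow$ contains infinitely many terms of $x$: this is exactly the choice of $m$, since $x_n \in \{m\}\uparrow \Leftrightarrow m \in x_n$. Second, $0 = \emptyset \notin \{m\}\uparrow$, which is clear. Third, $\{m\}\uparrow$ is closed, which I expect to be the only nontrivial point. For this I would verify the general fact that every principal up-set $a\uparrow$ is closed: by Theorem \ref{T4003s} the operator $u$ sends a sequence $y$ to $(\limsup y)\uparrow$, and if every $y_n \geq a$ then $\bigvee_{n \geq k} y_n \geq a$ for all $k$, whence $\limsup y \geq a$ and $(\limsup y)\uparrow \subseteq a\uparrow$; thus $u(a\uparrow) \subseteq a\uparrow$, and together with $a\uparrow \subseteq u(a\uparrow)$ this gives $u(a\uparrow) = a\uparrow$, so $a\uparrow$ is closed by Fact \ref{T2403}. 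Applying this with $a = \{m\}$ completes the construction, and Theorem \ref{T4113}(a) then yields $0 \notin \Lim x$.

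The main obstacle is conceptual rather than computational: recognizing that membership of the bottom element in $\Lim$ can be blocked by a single closed up-set, and that in $P(\omega)$ a nonzero $\limsup$ always contains an atom $\{m\}$ below which infinitely many terms lie. The argument is genuinely $P(\omega)$-specific at this step — it is the atomicity of $P(\omega)$ that converts ``$m \in \limsup x$'' into ``$x_n \geq \{m\}$ for infinitely many $n$'' — and I would not expect the equivalence to survive verbatim in an atomless algebra.
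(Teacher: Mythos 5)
Your proof is correct and is essentially the paper's own argument in dual form: the paper picks the same element $c_0\in\limsup x$ and uses the open set $\mathbb{B}\setminus(\{c_0\}\uparrow)$ together with the definition of a limit point, while you use the closed set $\{m\}\uparrow$ together with the intersection formula of Theorem \ref{T4113}(a) — the same separation by the same up-set. Your explicit verification that $a\uparrow$ is closed is also already available as Lemma \ref{T4101}(a).
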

\begin{proof} By the previous theorem we have that $\limsup x={0}$
implies ${0}\in \Lim x$.

Let us prove the inverse implication. Let us suppose that there
exists a sequence $x=\langle x_n:n\in \omega\rangle $ in $P(\omega)$
such that $0 \in \lim x$ and $\limsup x=c>0$. Let $c_0\in c$. The
set ${\mathbb B}\setminus (\{c_0\} \uparrow)$ is an open set
containing 0, so there exists $n_0$, such that  $x_n \in {\mathbb
B}\setminus (\{c_0\} \uparrow)$ for each $n \geq n_0$. So, $x_n \not
\in \{c_0\} \uparrow$, i.e.\ $c_0 \not \in x_n$, for each $n \geq
n_0$, which contradicts the fact that $c_0 \in \limsup x$. \end{proof}


\begin{lm}\label{T4116}
Let ${\mathbb B}$  be a c.B.a.\ such that  $\Lim
\neq{\lambda}^\uparrow$. Then  there exists a sequence $x$  such
that ${0} \in \Lim x \setminus {\lambda}^\uparrow(x).$
\end{lm}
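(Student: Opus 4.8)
The plan is to start from an arbitrary place where $\Lim$ strictly exceeds $\lambda^\uparrow$, and then transport that witness down to the point $0$ by composing with the continuous meet map $t\mapsto t\wedge b'$. First I would record the easy inclusion $\lambda^\uparrow\leq\Lim$, valid on every complete Boolean algebra: since $\lambda_{ls}$ satisfies (L1), its (L1)-closure is itself, so $\lambda^\uparrow=\bar{\lambda}_{ls}$, and Fact~\ref{T1247}(d) gives $\lambda^\uparrow\leq\lim_{{\mathcal O}_{\lambda_{ls}}}=\Lim$, the topologies coinciding by Fact~\ref{T1247}(e). Consequently $\lambda^\uparrow(x)\subset\Lim x$ for every sequence $x$, so the hypothesis $\Lim\neq\lambda^\uparrow$ produces a sequence $y=\langle y_n:n\in\omega\rangle$ and a point $b$ with $b\in\Lim y\setminus\lambda^\uparrow(y)$. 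By Theorem~\ref{T4003s} we have $\lambda^\uparrow(y)=(\limsup y)\uparrow$, so $b\notin\lambda^\uparrow(y)$ means exactly $\limsup y\not\leq b$, equivalently $(\limsup y)\wedge b'\neq 0$.

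Next I would push this witness down to $0$. The map $f_{b'}(t)=t\wedge b'$ is continuous on $\langle{\mathbb B},{\mathcal O}^\uparrow\rangle$ by Lemma~\ref{T4116}(a) (recall ${\mathcal O}_{\lambda_{ls}}={\mathcal O}^\uparrow$), and topological continuity entails sequential continuity, so from $b\in\Lim y$ one gets $f_{b'}(b)\in\Lim(f_{b'}\circ y)$. Setting $z=\langle y_n\wedge b':n\in\omega\rangle$, this reads $0=b\wedge b'\in\Lim z$. It then remains only to verify that $z$ fails to be a $\lambda^\uparrow$-witness, i.e.\ that $0\notin\lambda^\uparrow(z)=(\limsup z)\uparrow$, which by the same reasoning as above is equivalent to $\limsup z\neq 0$.

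Finally I would compute $\limsup z$ via the infinite distributive laws of the complete Boolean algebra: for each $k$ one has $\bigvee_{n\geq k}(y_n\wedge b')=\bigl(\bigvee_{n\geq k}y_n\bigr)\wedge b'$, and meeting over $k$ with the constant factor $b'$ yields $\limsup z=(\limsup y)\wedge b'$. By the choice of $b$ this is nonzero, whence $0\notin(\limsup z)\uparrow=\lambda^\uparrow(z)$. Therefore $z$ satisfies $0\in\Lim z\setminus\lambda^\uparrow(z)$, which is the required conclusion.

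I do not anticipate a genuine obstacle: the argument is the exact analogue, for $\lambda^\uparrow$, of the ``meet with $b'$'' device already used for $\lambda_{ls}$ in Lemma~\ref{T4116}(b), and the only two points demanding care are the (routine) passage from topological to sequential continuity and the distributive evaluation of $\limsup z$, both standard in a complete Boolean algebra. If anything is delicate it is merely confirming that the continuity result of Lemma~\ref{T4116}(a) is stated for the same topology ${\mathcal O}^\uparrow={\mathcal O}_{\lambda_{ls}}$ with respect to which $\Lim$ is taken, which indeed it is.
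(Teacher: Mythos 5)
Your argument is correct, and it is the natural one: the paper actually states this lemma without a proof (it is the $\lambda^\uparrow$-analogue of part (b) of the earlier continuity lemma, imported from prior work), so there is nothing to compare against, but every step you take checks out. The inclusion $\lambda^\uparrow(y)=(\limsup y)\!\uparrow\;\subset \Lim y$ that launches the proof is even available directly as Theorem \ref{T4113}(b), the transport of the witness to $0$ via $t\mapsto t\wedge b'$ is justified by the continuity of $f_{b'}^{\wedge}$ (you should cite Lemma \ref{T4118}, which is the version proved in this paper for ${\mathcal O}^\uparrow$ --- the label T4116 is doubly assigned in the source, so your reference is ambiguous), and the computation $\limsup\langle y_n\wedge b'\rangle=(\limsup y)\wedge b'\neq 0$ via the infinite distributive law is valid in any complete Boolean algebra. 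No gaps.
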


In the sequel we will give a
sufficient condition for a sequence to converge to a point. For this, we need the following lemma.

\begin{lm}\label{T4119}
Let $\langle x_n\rangle$  and $\langle z_n\rangle$  be two sequences
in ${\mathbb B}$ such that $x_n \leq z_n$, $n \in \omega$. Then
$\Lim z_n \subset \Lim x_n$.
\end{lm}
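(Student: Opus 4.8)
The plan is to reduce the statement to the first characterization of the a posteriori limit given in Theorem \ref{T4113}(a), namely
$$\Lim x = \textstyle\bigcap \left\{F \in {\mathcal F}^\uparrow : x_n \in F \mbox{ for infinitely many } n \in \omega\right\},$$
combined with the fact (Theorem \ref{T4105}) that every ${\mathcal O}^\uparrow$-closed set is upward closed. The monotonicity hypothesis $x_n \leq z_n$ interacts cleanly with upward closure, and this is the whole content of the argument.

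First I would fix an arbitrary $d \in \Lim z$; the goal is to show $d \in \Lim x$. By the displayed intersection formula applied to $x$, it suffices to prove that $d \in F$ for every $F \in {\mathcal F}^\uparrow$ containing $x_n$ for infinitely many $n$. So I fix such an $F$ and set $A = \{n \in \omega : x_n \in F\}$, which is infinite by assumption.

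Next I would exploit that $F$, being closed, is upward closed by Theorem \ref{T4105}. Since $x_n \leq z_n$, for each $n \in A$ we have $z_n \geq x_n \in F$, hence $z_n \in F$. Thus $\{n : z_n \in F\} \supseteq A$ is infinite, i.e.\ $F$ is a closed set containing $z_n$ for infinitely many $n$. Applying the same intersection formula, this time to the sequence $z$ and using $d \in \Lim z$, yields $d \in F$. As $F$ was an arbitrary closed set meeting the range of $x$ infinitely often, we conclude $d \in \Lim x$, which gives $\Lim z \subset \Lim x$.

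There is essentially no obstacle here: the only nontrivial ingredient is that closed sets are upward closed (Theorem \ref{T4105}), which is precisely what converts the pointwise inequality $x_n \leq z_n$ into the set inclusion $\{n : x_n \in F\} \subseteq \{n : z_n \in F\}$ for each closed $F$. If one prefers a neighborhood-based formulation, the dual statement (open sets are downward closed) gives the same conclusion directly: any open $U \ni d$ that eventually contains $z_n$ must, by downward closure and $x_n \leq z_n$, eventually contain $x_n$ as well, so $x \to_{{\mathcal O}^\uparrow} d$. I would mention this equivalent route but carry out the intersection-formula version, since Theorem \ref{T4113}(a) is already available.
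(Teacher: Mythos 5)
Your proof is correct and follows exactly the route the paper takes: the paper's proof consists of citing Theorem \ref{T4113}(a) together with the fact that closed sets are upward closed, and your argument is simply that citation written out in full detail. No discrepancies.
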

\begin{proof} It follows directly from Theorem \ref{T4113} (a) and
the fact that each closed set is upward closed. \end{proof}

\begin{thr}\label{T4120}
Let ${\mathbb B}$ be a c.B.a. Then, if for a sequence $x$ and an
element $a$ we have
\begin{equation}\label{EQ1-T4120}
\forall y \prec x~ \exists z \prec y ~ \limsup z \leq a,
\end{equation} then $a\in \lim x$.
\end{thr}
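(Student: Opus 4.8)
The plan is to recognize hypothesis (\ref{EQ1-T4120}) as exactly the assertion that $a$ belongs to the (L3)-closure $(\lambda^\uparrow)^*(x)$ of the convergence $\lambda^\uparrow$, and then to invoke the inequality $(\lambda^\uparrow)^*\leq\lim_{{\mathcal O}^\uparrow}$ supplied by Fact \ref{T1247}.

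First I would rewrite the inner clause in terms of $\lambda^\uparrow$. Since $\lambda^\uparrow(z)=(\limsup z)\upar$, for any sequence $z$ and element $a$ we have the equivalence
$$a\in\lambda^\uparrow(z)\iff a\geq\limsup z\iff\limsup z\leq a.$$
Thus the condition ``$\limsup z\leq a$'' appearing in (\ref{EQ1-T4120}) is literally ``$a\in\lambda^\uparrow(z)$''.

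Next I would match the quantifier structure to the definition of $*$. Because $\lambda^\uparrow$ already satisfies (L1) and (L2), Fact \ref{T1247}(c) gives
$$(\lambda^\uparrow)^*(x)=\bigcap_{f\in\omega^{\uparrow\omega}}\;\bigcup_{g\in\omega^{\uparrow\omega}}\lambda^\uparrow(x\circ f\circ g).$$
Reading off membership, $a\in(\lambda^\uparrow)^*(x)$ means: for every $f$ there is $g$ with $a\in\lambda^\uparrow(x\circ f\circ g)$. Writing $y=x\circ f$ for an arbitrary subsequence $y\prec x$ and $z=y\circ g=x\circ f\circ g$ for an arbitrary subsequence $z\prec y$, and using the equivalence above, this is precisely: for every $y\prec x$ there is $z\prec y$ with $\limsup z\leq a$, which is hypothesis (\ref{EQ1-T4120}). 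Hence (\ref{EQ1-T4120}) holds if and only if $a\in(\lambda^\uparrow)^*(x)$.

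Finally I would conclude. Since $\lambda^\uparrow$ satisfies (L1) and (L2), the intermediate closures in the chain of Fact \ref{T1247}(d) collapse (one has $(\lambda^\uparrow)'=\lambda^\uparrow$ and $\overline{\lambda^\uparrow}=\lambda^\uparrow$), and that chain reduces to $(\lambda^\uparrow)^*\leq\lim_{{\mathcal O}^\uparrow}$, i.e.\ $(\lambda^\uparrow)^*(x)\subset\Lim x$ for every sequence $x$. Combining this with the previous paragraph yields $a\in(\lambda^\uparrow)^*(x)\subset\Lim x$, as required. The one place demanding care — the main, though routine, obstacle — is the bookkeeping in the quantifier translation: one must check that letting $f$ range over $\omega^{\uparrow\omega}$ lets $y=x\circ f$ range over exactly the subsequences of $x$, that letting $g$ range over $\omega^{\uparrow\omega}$ lets $z=y\circ g$ range over exactly the subsequences of $y$, and that $f\circ g\in\omega^{\uparrow\omega}$ so that $z=x\circ(f\circ g)$ is again a genuine subsequence of $x$; these are the standard facts about composition of strictly increasing functions.
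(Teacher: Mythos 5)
Your proof is correct: the hypothesis (\ref{EQ1-T4120}) is, after the translation $a\in\lambda^\uparrow(z)\Leftrightarrow\limsup z\leq a$ from Theorem \ref{T4003s}, precisely the statement $a\in(\lambda^\uparrow)^*(x)$, and Fact \ref{T1247}(d) (with the primed and barred closures collapsing because $\lambda^\uparrow$ already satisfies (L1) and (L2)) gives $(\lambda^\uparrow)^*(x)\subset\Lim x$. The paper states this theorem without printing a proof, but your argument is exactly the one its machinery is set up to deliver, so nothing further is needed.
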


$$
\cite{KuPa07} \forall x \in {\mathbb B}^\omega ~ \exists y \prec x~ \forall z \prec y ~ \limsup z =\limsup y ,\eqno{( \hbar )}
$$

But, in the class of algebras with
condition ($\hbar$), statement  (\ref{EQ1-T4120}) is a
characterization of the a posteriori limit. For more about condition ($\hbar$)  check \cite{KuPa07} and \cite{KuTo}.

\begin{thr}\label{T4121}
Let ${\mathbb B}$ be a c.B.a. satisfying $(\hbar)$. Then, for each
sequence $x$ we have
$$a\in \lim x \Leftrightarrow \forall y \prec x~ \exists z \prec y ~
\limsup z \leq a.$$
\end{thr}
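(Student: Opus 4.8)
The plan is to prove the biconditional in Theorem \ref{T4121} by combining the direct implication from Theorem \ref{T4120} with its converse, using hypothesis $(\hbar)$ only for the latter direction. The statement to prove reads: under $(\hbar)$, for every sequence $x$ and element $a$,
$$a\in \lim x \Leftrightarrow \forall y \prec x~ \exists z \prec y ~ \limsup z \leq a.$$

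First I would dispose of the ($\Leftarrow$) direction, which requires no extra hypothesis. This is exactly the content of Theorem \ref{T4120}: if for every subsequence $y\prec x$ there is a further subsequence $z\prec y$ with $\limsup z\leq a$, then $a\in\lim x$. So this half is immediate by citing that theorem.

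The main work lies in the ($\Rightarrow$) direction, where $(\hbar)$ is genuinely needed. The plan is to argue by contraposition: suppose the right-hand condition fails, so there exists a subsequence $y_0\prec x$ such that for every $z\prec y_0$ we have $\limsup z \not\leq a$. I want to conclude $a\notin\lim x$. Here is where $(\hbar)$ enters: applied to the sequence $y_0$, it yields a subsequence $y\prec y_0$ (hence $y\prec x$ by transitivity of $\prec$) such that every further subsequence $z\prec y$ satisfies $\limsup z=\limsup y$. Call this common value $s=\limsup y$. Since $y\prec y_0$, our failure assumption gives $\limsup y=s\not\leq a$. Now I would use the characterization of $\Lim$ from Theorem \ref{T4113}(a), namely $\Lim x=\bigcap_{f\in\omega^{\uparrow\omega}}\cl_{\omega_1}(\{x_{f(n)}:n\in\omega\})$, equivalently the intersection over all subsequences. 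The idea is that the subsequence $y$ witnesses $a\notin\Lim x$: because every $z\prec y$ has $\limsup z=s$, the ``limit behavior'' along $y$ is pinned to $s\uparrow$, and since $s\not\leq a$ we get $a\notin s\uparrow$. The technical step is to show $a\notin \cl_{\omega_1}(\{y_n:n\in\omega\})$, i.e.\ $a$ is not in the closure of the range of $y$; together with the fact that $\Lim x$ is contained in this closure (taking $f$ to be the embedding defining $y$), this forces $a\notin\Lim x$.

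The hard part will be establishing that $a\notin\cl_{\omega_1}(\{y_n:n\in\omega\})$ from the stability property $\forall z\prec y\;\limsup z=s$ with $s\not\leq a$. The natural route is to invoke Fact \ref{T4113} or the weakly-topological characterization (Fact \ref{T1263}): under $(\hbar)$ the convergence should be well enough behaved that $\Lim y$ collapses to $(\limsup y)\upar=s\upar$, so that the closure of the range of the stabilized subsequence $y$ contributes no points below $s$. Concretely, I would show that for a subsequence with the $(\hbar)$-stability property, one has $a\in\Lim y \Leftrightarrow s\leq a$, using $(\Leftarrow)$ from Theorem \ref{T4120} (every $z\prec y$ has $\limsup z=s\leq a$) and the reverse containment $\Lim y\subset\cl_{\omega_1}(\{y_n\})$ combined with a limsup-type argument showing elements not above $s$ are separated off. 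Since $s\not\leq a$, this yields $a\notin\Lim y$, and as $\Lim x\subset\Lim y$ by Theorem \ref{T4113}(a) (the intersection over subsequences of $x$ is contained in the corresponding intersection for the subsequence $y$), we conclude $a\notin\Lim x$, completing the contrapositive.
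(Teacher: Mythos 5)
Your overall architecture is sound: the ($\Leftarrow$) direction is exactly Theorem \ref{T4120}, and for ($\Rightarrow$) the right move is indeed to apply $(\hbar)$ inside a witnessing subsequence and then separate $a$ from the resulting lim\,sup-stable subsequence. (The paper states Theorem \ref{T4121} without an explicit proof, so there is nothing official to compare against.) However, your key technical step is false as you state it. You claim that the stability property $\forall z\prec y\ \limsup z=s$ together with $s\not\leq a$ forces $a\notin\cl_{\omega_1}(\{y_n:n\in\omega\})$. By Lemma \ref{T1233a}(a) that closure equals $(\limsup y)\upar\cup\bigcup_{n\in\omega}y_n\upar$, and the second half of the union can contain $a$ even when $s\not\leq a$: in $P(\omega)$ take $y_0=\{1\}$ and $y_n=\{0\}$ for $n\geq1$; this sequence is lim\,sup-stable with $s=\{0\}\not\leq\{1\}=a$, yet $a=y_0$ obviously lies in the closure of the range. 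So ``elements not above $s$ are separated off'' is not true of this closure --- it contains everything above any single term --- and the same defect undermines your justification of the auxiliary claim $a\in\Lim y\Leftrightarrow s\leq a$ (the claim itself is true, but $\Lim y\subset\cl_{\omega_1}(\{y_n\})$ plus ``a limsup-type argument'' does not prove it, because that single closure is too big).

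The missing ingredient is supplied by your contrapositive hypothesis and is easy to add. If every $z\prec y_0$ satisfies $\limsup z\not\leq a$, then the set $\{n:(y_0)_n\leq a\}$ must be finite: otherwise the subsequence $z$ formed by those terms would satisfy $\limsup z\leq\bigvee_n z_n\leq a$, a contradiction. Hence you may first pass to a tail of $y_0$ on which no term is $\leq a$ (still a subsequence of $x$, and the failure hypothesis is inherited by it), and only then apply $(\hbar)$ to obtain the stable $y$. Now both pieces of $(\limsup y)\upar\cup\bigcup_{n}y_n\upar$ miss $a$, so $a\notin\cl_{\omega_1}(\{y_n:n\in\omega\})\supset\Lim x$ and the contrapositive closes. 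With this one extra observation inserted, your argument is complete.
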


\begin{te}\label{T1233}\rm
If ${\mathbb B}$ is a complete Boolean algebra satisfying condition
$(\hbar)$, then $\lambda _{\mathrm{ls} }$ and $\lambda _{\mathrm{li} }$ are weakly topological convergences.
\end{te}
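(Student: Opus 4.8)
The plan is to show that each of the two convergences is weakly topological by invoking Fact \ref{T1263}(a): a convergence $\lambda$ satisfying (L1) and (L2) is weakly topological exactly when $\lambda^* = \lim_{{\mathcal O}_\lambda}$, i.e.\ when for all $x$ and $a$ one has $a \in \lim_{{\mathcal O}_\lambda}(x)$ iff $\forall y \prec x\; \exists z \prec y\; a \in \lambda(z)$. Since the paper has just established, in Theorem \ref{T4121}, that under $(\hbar)$ the a posteriori limit of $\lambda_{\mathrm{ls}}$ is characterized by
\[
a \in \lim x \Leftrightarrow \forall y \prec x\; \exists z \prec y\; \limsup z \leq a,
\]
the whole argument for $\lambda_{\mathrm{ls}}$ reduces to recognizing that the right-hand side is precisely the condition $\forall y \prec x\; \exists z \prec y\; a \in \lambda^\uparrow(z)$ appearing in Fact \ref{T1263}(a). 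This is immediate from Theorem \ref{T4003s}, which gives $\lambda^\uparrow(z) = (\limsup z)\upar$, so that $a \in \lambda^\uparrow(z)$ is the same as $\limsup z \leq a$.

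So the first step is to verify the hypotheses of Fact \ref{T1263}(a): $\lambda_{\mathrm{ls}}$ satisfies (L1) (noted right after its definition) but fails (L2), hence one works with its (L2)-closure $\lambda^\uparrow = \bar\lambda_{\mathrm{ls}}$, which by construction (Fact \ref{T1247}(b)) satisfies (L1) and (L2) and generates the same topology ${\mathcal O}^\uparrow = {\mathcal O}_{\lambda_{\mathrm{ls}}}$. The second step is to translate Theorem \ref{T4121} into the language of $\lambda^\uparrow$ using Theorem \ref{T4003s}, thereby matching the condition in Fact \ref{T1263}(a) verbatim and concluding $(\lambda^\uparrow)^* = \lim_{{\mathcal O}^\uparrow}$, which is exactly the statement that $\lambda_{\mathrm{ls}}$ is weakly topological. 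The third step is the dual argument for $\lambda_{\mathrm{li}}$, replacing $\limsup$ by $\liminf$, $\upar$ by $\downar$, and $\leq$ by $\geq$; the paper's lemmas are consistently stated with both a lim\,sup and a lim\,inf version (e.g.\ Lemma \ref{T1233a}), so the dual of Theorem \ref{T4121} holds by the order-reversing symmetry of a Boolean algebra, and the same reduction to Fact \ref{T1263}(a) applies.

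The main subtlety is not computational but bookkeeping: one must be careful that the characterization in Theorem \ref{T4121} is stated for the a posteriori limit $\lim = \lim_{{\mathcal O}^\uparrow}$, while Fact \ref{T1263}(a) is phrased for an abstract $\lambda$ satisfying (L1) and (L2); the identification that makes these coincide is precisely $\lambda^\uparrow(z) = (\limsup z)\upar$, and the equivalence $\limsup z \leq a \Leftrightarrow a \in (\limsup z)\upar$ must be applied inside the quantifier block. Because the condition $(\hbar)$ is exactly what was used to prove Theorem \ref{T4121}, no further algebraic work is needed here — the weak-topological property is essentially a restatement of that theorem, so the hard part is ensuring the dual (lim\,inf) half is legitimately obtained by the duality and does not secretly require a separate version of $(\hbar)$; I would note that $(\hbar)$ is self-dual in the sense that applying it in the order-reversed algebra ${\mathbb B}^{\mathrm{op}}$ yields the lim\,inf analogue, which justifies the symmetric argument.
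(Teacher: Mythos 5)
Your proposal is correct and matches the intended argument: the paper states this theorem without a written proof precisely because it is the immediate corollary of Theorem \ref{T4121} combined with Fact \ref{T1263}(a) and the identification $\lambda^\uparrow(z)=(\limsup z)\upar$ from Theorem \ref{T4003s}, which is exactly the reduction you carry out. Your handling of the two bookkeeping points --- that one must pass to the (L1)--(L2) closure $\lambda^\uparrow$ before invoking Fact \ref{T1263}(a), and that the $\liminf$ half follows by transporting everything through the complementation map $b\mapsto b'$ (which conjugates $\lambda_{\mathrm{ls}}$ with $\lambda_{\mathrm{li}}$ and shows $(\hbar)$ yields its own $\liminf$ analogue) --- is also sound.
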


\begin{lem}\rm \label{T1287}
Let ${\mathbb B}$ be a complete Boolean algebra, $x=\langle x_n:n\in \omega\rangle $
a sequence in ${\mathbb B}$ and $\tau_x=\{\langle \check{n},x_n\rangle : n\in \omega\}$ the corresponding ${\mathbb B}$-name
for a real. Then

(a) If $A$ is an infinite subset of $\omega$ and $f_A:\omega
\rightarrow A$ is the corresponding increasing bijection, then
 $ \| |\tau_x\cap \check{A}|=\check{\omega} \| =\limsup x\circ f_A$.

(b) The following conditions are equivalent:

(i)  $\forall f \in \omega^{\uparrow \omega} \; \exists g \in \omega^{\uparrow \omega} \; \limsup x\circ f \circ g =0$;

(ii) $\forall y\prec x \; \exists z \prec y \; \limsup z  =0$;

(iii) $\forall A \in [\omega]^{\omega}\; \exists B \in [A]^{\omega} \; \| |\tau_x \cap \check{B}| = \check{\omega }\| =0$.
\end{lem}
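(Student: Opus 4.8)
The plan is to prove (a) first, since (b) will then follow by a routine translation between subsequences and infinite subsets. For (a), I would apply the known identity $\limsup y = \| |\tau_y| = \check{\omega}\|$ from \cite[Lemma 2, a)]{KuPa07} (quoted just before Lemma \ref{T1264}) to the sequence $y := x \circ f_A$, obtaining $\limsup(x \circ f_A) = \| |\tau_{x\circ f_A}| = \check{\omega}\|$. It then remains to check $\| |\tau_{x\circ f_A}| = \check{\omega}\| = \| |\tau_x \cap \check{A}| = \check{\omega}\|$. The key point is that $f_A \in V$ is a bijection of $\omega$ onto $A$, so $\check{f_A}$ is forced to be a bijection $\omega \to A$; and since $\|\check{i} \in \tau_{x\circ f_A}\| = x_{f_A(i)} = \|\check{f_A(i)} \in \tau_x \cap \check{A}\|$, this name restricts, in every generic extension, to a bijection between $\tau_{x\circ f_A}$ and $\tau_x \cap \check{A}$. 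Both are subsets of $\omega$, hence infinite iff of cardinality $\omega$, so $|\tau_{x\circ f_A}| = \check{\omega}$ iff $|\tau_x \cap \check{A}| = \check{\omega}$, and the two Boolean values coincide.

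For (b), the equivalence (i) $\Leftrightarrow$ (ii) is purely notational: by the definition of subsequence, $y \prec x$ means $y = x \circ f$ for some $f \in \omega^{\uparrow\omega}$, and $z \prec y$ means $z = y \circ g = x \circ f \circ g$ for some $g \in \omega^{\uparrow\omega}$ (using that a composition of strictly increasing functions is again strictly increasing, and that every subsequence arises in this way). Thus (ii) quantifies over exactly the pairs $(f,g)$ appearing in (i).

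For (i)/(ii) $\Leftrightarrow$ (iii) I would exploit the correspondence between strictly increasing functions and infinite subsets given by the range: if $h \in \omega^{\uparrow\omega}$ then $\ran h$ is infinite and $h$ is itself the increasing enumeration $f_{\ran h}$, so $x \circ h = x \circ f_{\ran h}$; conversely $A \in [\omega]^\omega$ yields $f_A \in \omega^{\uparrow\omega}$. To prove (i) $\Rightarrow$ (iii): given $A$, apply (i) to $f := f_A$ to get $g$ with $\limsup(x \circ f_A \circ g) = 0$, and put $B := \ran(f_A \circ g) \subseteq A$; then $f_B = f_A \circ g$, so $\limsup(x \circ f_B) = 0$, and (a) gives $\| |\tau_x \cap \check{B}| = \check{\omega}\| = 0$. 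For (iii) $\Rightarrow$ (i): given $f$, set $A := \ran f$ and take $B \subseteq A$ from (iii); since $B \subseteq \ran f$ and $f$ enumerates $\ran f$ increasingly, we have $f_B = f \circ g$ for $g := f^{-1} \circ f_B \in \omega^{\uparrow\omega}$, and (a) converts $\| |\tau_x \cap \check{B}| = \check{\omega}\| = 0$ back into $\limsup(x \circ f \circ g) = 0$.

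The main obstacle is the forcing computation in part (a): one must argue carefully that the ground-model bijection $f_A$ transports the name $\tau_{x\circ f_A}$ onto $\tau_x \cap \check{A}$ and preserves the property of being infinite, so that the Boolean value delivered by the cited lemma literally equals the quantity we want. Once (a) is established, part (b) is bookkeeping: matching ranges of strictly increasing functions with infinite subsets and their increasing enumerations, with (i) $\Leftrightarrow$ (ii) requiring no work at all.
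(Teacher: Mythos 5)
The paper states Lemma \ref{T1287} without proof (it is quoted as a known fact, presumably from the authors' earlier work), so there is no in-text argument to compare against; judged on its own, your proof is correct and complete. For part (a), your route --- apply $\limsup y=\||\tau_y|=\check\omega\|$ to $y=x\circ f_A$ and then transport $\tau_{x\circ f_A}$ onto $\tau_x\cap\check A$ via the ground-model bijection $f_A$ in every generic extension --- works; note that the paper effectively carries out the same identity by a direct Boolean-value computation inside the proof of Theorem \ref{T4107} ((d)$\Rightarrow$(e)), namely $\limsup\langle q_{f(n)}\rangle=\bigwedge_{k}\bigvee_{n\geq k}\|\check{f(n)}\in\tau\|=\|\,|\tau\cap\check A|=\check\omega\,\|$ for injective $f$, which avoids passing to generic extensions altogether and uses only that $\{n: f_A(n)\geq k\}$ is a final segment of $\omega$. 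Either way the key point is the same. Your treatment of (b) is also sound: (i)$\Leftrightarrow$(ii) is indeed just the paper's definition of $\prec$, and the correspondence $h\mapsto\ran h$, $A\mapsto f_A$ together with the factorization $f_B=f\circ(f^{-1}\circ f_B)$ for $B\subseteq\ran f$ correctly reduces (iii) to (i) and back via part (a).
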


\begin{te}\rm \label{T1288}
If  ${\mathbb B}$ is a complete Boolean algebra satisfying
$1\Vdash_{\mathbb B} ({\mathfrak h}^V)^{\check{~}} <{\mathfrak t}$ and cc$({\mathbb B})>2^{\mathfrak h}$,
then $\lambda_{{\mathrm{ls}}}$ is not a weakly-topological convergence on ${\mathbb B}$.
\end{te}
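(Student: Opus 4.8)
The plan is to exhibit a single sequence $x$ together with the element $0$ for which $0\in\Lim x$ but $0\notin\lambda^*(x)$, where $\lambda^*=(\lambda^\uparrow)^*$ denotes the (L3)-closure of $\lambda^\uparrow=\bar\lambda_{\mathrm{ls}}$. First I would record the reduction. Evaluating $\lambda^*$ from Fact~\ref{T1247}(c) applied to the (L1),(L2)-convergence $\lambda^\uparrow$ gives $a\in\lambda^*(x)\Leftrightarrow\forall y\prec x\,\exists z\prec y\ \limsup z\le a$, and for $a=0$ this is exactly condition (ii) of Lemma~\ref{T1287}(b). Since $\lambda^*\le\Lim$ always holds (Fact~\ref{T1247}(d)) and, by Theorem~\ref{T4120}, the implication $(\forall y\prec x\,\exists z\prec y\ \limsup z\le a)\Rightarrow a\in\Lim x$ is automatic, the convergence $\lambda_{\mathrm{ls}}$ fails to be weakly topological the moment one sequence violates the reverse implication. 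Hence it suffices to construct $x$ satisfying
$$\text{(A)}\quad 0\in\Lim x,\qquad\text{(B)}\quad \neg\big(\forall y\prec x\,\exists z\prec y\ \limsup z=0\big).$$
By Lemma~\ref{T1287}(b), (B) is equivalent to the existence of an infinite $A\subseteq\omega$ with $\||\tau_x\cap\check B|=\check\omega\|>0$ for every infinite $B\subseteq A$; after replacing $x$ by $x\circ f_A$ I may assume this holds for $A=\omega$.

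I would then produce the witness as a $\mathbb B$-name $\tau$ for a subset of $\omega$, setting $x_n=\|\check n\in\tau\|$ so that $\tau=\tau_x$. In these terms (B) asserts merely that for every ground-model infinite $B$ some condition forces $\tau\cap B$ to be infinite, i.e.\ that $\tau$ meets each ground-model infinite set with positive Boolean value; this is the \emph{easy} half and is read off directly from the construction. The antichain of size $>2^{\mathfrak h}$ furnished by $\mathrm{cc}(\mathbb B)>2^{\mathfrak h}$ supplies $2^{\mathfrak h}$ mutually incompatible coordinates, which is precisely the room needed to diagonalize simultaneously against the $\le 2^{\mathfrak h}$ relevant ground-model combinatorial objects (the levels and branches of a Balcar--Pelant--Simon base tree of height $\mathfrak h$, together with the family against which (B) is arranged).

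The technical heart is (A). The plan is to verify, for each infinite $A$, that $0\in\cl_{\omega_1}(\{x_n:n\in A\})$, which by Theorem~\ref{T4113}(a) yields $0\in\Lim x$. Here the hypothesis $1\Vdash_{\mathbb B}(\mathfrak h^V)^{\check{~}}<\mathfrak t$ is the engine: it guarantees that in every extension each $\subseteq^*$-descending system of length $\mathfrak h^V$ has a pseudo-intersection, so that along any branch of the ground-model base tree the corresponding levels admit a common refinement strictly below them. Pulling this back through the Maximum Principle (Fact~\ref{T719}(i)), I would manufacture, inside the iterated closure of $\{x_n:n\in A\}$, a transfinite decreasing chain of elements arising as $\limsup$'s of sequences drawn from earlier closure stages, whose infimum is $0$. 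The delicate point is that this descent must be realized within the first $\omega_1$ stages of the $u$-iteration (all that $\cl_{\omega_1}$ provides), although the natural length of the recursion is $\mathfrak h^V$; this is reconciled by reflecting the construction into the Boolean-valued universe, where the descent becomes one statement about $\tau$ forced with value $1$ and then transported back to the finitely many closure rounds actually needed.

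I expect the main obstacle to be exactly this verification of (A). Properties (A) and (B) pull in opposite directions: (B) forces $\tau$ to meet every ground-model infinite set, while (A) forces the closure of \emph{every} subsequence-range to collapse all the way down to $0$. Balancing the two is what consumes the two hypotheses in tandem, $\mathrm{cc}(\mathbb B)>2^{\mathfrak h}$ providing the $2^{\mathfrak h}$ independent choices and $(\mathfrak h^V)^{\check{~}}<\mathfrak t$ continually providing strictly smaller elements within the closure, and the single hardest step is confirming that the $\mathfrak h^V$-indexed descent to $0$ survives the restriction to $\omega_1$ closure iterations.
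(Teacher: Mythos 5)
Your opening reduction is correct: by Fact~\ref{T1263}(a) and Lemma~\ref{T1287}(b) it suffices to produce one sequence $x$ with $0\in\Lim x$ while $\||\tau_x\cap\check B|=\check\omega\|>0$ for every $B\in[\omega]^\omega$, and you have located the right combinatorial apparatus: a Balcar--Pelant--Simon base tree of height $\mathfrak h$, an antichain of ${\mathbb B}$ indexed by its branches (there are at most $\mathfrak c^{\mathfrak h}=2^{\mathfrak h}$ of them, which is what the cellularity hypothesis is for), and names for pseudo-intersections of the branches supplied by $1\Vdash(\mathfrak h^V)^{\check{~}}<\mathfrak t$ via the Maximum Principle. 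Note that the paper states Theorem~\ref{T1288} without proof (it is imported from \cite{KuPaCZ}), so there is no in-paper argument to match; judged on its own, your text stops exactly where the proof begins: the sequence $x$ is never defined, property (B) is said to be ``read off from the construction'' that is not given, and for property (A) you explicitly flag that you cannot carry out the step you propose.

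More importantly, the mechanism you envisage for (A) --- a transfinite decreasing chain of natural length $\mathfrak h^V$ whose infimum is $0$ --- cannot work inside $\cl_{\omega_1}$: closed sets in $\langle{\mathbb B},{\mathcal O}^\uparrow\rangle$ are closed only under infima of \emph{countable} decreasing sequences (Theorem~\ref{T4105}(c)), and Example~\ref{EX4112} shows they need not contain the infimum of an uncountable chain, so no ``reflection into the Boolean-valued universe'' will let an $\mathfrak h$-long descent reach $0$. The descent is in fact short, and it is driven by disjointness rather than by a chain. Taking $x_n=\|\check n\in\tau\|$ with $b_\beta\Vdash\tau=\sigma_\beta$, where $\sigma_\beta$ names a pseudo-intersection of the branch $\beta$, one computes for a node $T$ of the tree that $\limsup\langle x_n:n\in T\rangle=\bigvee\{b_\beta: T\in\beta\}$, because the levels are almost disjoint families and $\sigma_\beta$ is almost contained in every node of $\beta$. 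Given $A\in[\omega]^\omega$, density of the tree yields a node $T_0\subseteq^* A$ and then infinitely many pairwise incomparable nodes $T_i\subseteq^* T_0$; the values $\limsup\langle x_n:n\in T_i\rangle$ lie in $u(\{x_n:n\in A\})$ and are pairwise disjoint, since no branch passes through two incomparable nodes, so their $\limsup$ is $0$ and hence $0\in u^2(\{x_n:n\in A\})\subseteq\cl_{\omega_1}(\{x_n:n\in A\})$. Two closure iterations suffice; this use of the antichain $\{b_\beta\}$ to make the relevant limsup vanish in one step is the idea missing from your proposal.
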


\section{Closed sets and the closure operator}
\label{s40}

\subsection{A characterization of closed sets and some properties}

Using the convergence
$\lambda^\uparrow$,   we define the operator
$u_{\lambda^\uparrow}:P({\mathbb B})\rightarrow P({\mathbb B})$ by
$$u_{\lambda^\uparrow}(A)=\textstyle
\bigcup_{y\in A^\omega}\lambda^\uparrow (y).$$ Obviously,
$u_{\lambda^\uparrow} =u_{\lambda^\uparrow}\uparrow$.

In this chapter, instead of $u_{\lambda^\uparrow}$ we will often use
the abbreviation  $u$. Since $\lambda^\uparrow$
satisfies conditions (L1) and (L2), by Fact \ref{T2403}, the set
$F$ is closed in the space  $\langle {\mathbb B},{\mathcal
O}^\uparrow\rangle $ iff $F=u(F)$.

\begin{thr}\label{T4105} For a set $F \subset {\mathbb B}$ the following conditions are
equivalent:

(a) $F$ is closed in the space $\langle {\mathbb B},{\mathcal
O}^\uparrow\rangle $.

(b)  $F$  is upward closed and closed with respect to the $\limsup$
operator.

(c) $F$ is upward closed and for each decreasing sequence $x$ in $F$
we have $\bigwedge_{n \in \omega} x_n \in F$ ($F$ is
$\omega_1$-closed).
\end{thr}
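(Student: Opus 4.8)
The plan is to prove the three-way equivalence by establishing (a)$\Leftrightarrow$(b) first, using the characterization that closed sets are exactly the fixed points of $u$ (Fact~\ref{T2403}), and then showing (b)$\Leftrightarrow$(c) by analyzing what $\limsup$-closure means combined with upward closure. Throughout I would use the key structural fact, already recorded in Theorem~\ref{T4003s}, that $\lambda^\uparrow(y)=(\limsup y)\uparrow$, so that membership of a limit in a set is governed entirely by the single element $\limsup y$.

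For (a)$\Rightarrow$(b): assuming $F$ is closed, so $F=u(F)$, I would first show $F$ is upward closed. Since $u(F)=\bigcup_{y\in F^\omega}\lambda^\uparrow(y)$ and each $\lambda^\uparrow(y)=(\limsup y)\uparrow$ is itself upward closed, the union $u(F)=F$ is upward closed; alternatively one invokes $u=u\uparrow$ noted just before the theorem. To see $F$ is $\limsup$-closed, take any sequence $x$ in $F$ and consider its constant-free image: $\limsup x=\limsup x$ is the element $\lambda_{\mathrm{ls}}(x)$, and since $\limsup x\le\limsup x$ we have $\limsup x\in(\limsup x)\uparrow=\lambda^\uparrow(x)\subseteq u(F)=F$. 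For (b)$\Rightarrow$(a): assuming $F$ is upward closed and $\limsup$-closed, I would show $u(F)\subseteq F$ (the reverse inclusion is automatic from Fact~\ref{T2301}(b)). Given $b\in u(F)$, there is a sequence $y$ in $F$ with $b\in\lambda^\uparrow(y)=(\limsup y)\uparrow$, so $b\ge\limsup y$; but $\limsup y\in F$ by $\limsup$-closure and then $b\in F$ by upward closure.

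For the equivalence of (b) and (c), the point is that, for an upward-closed set, being closed under arbitrary $\limsup$ is equivalent to being closed under infima of decreasing sequences. The direction (b)$\Rightarrow$(c) is immediate: if $x$ is decreasing then $\limsup x=\bigwedge_n\bigvee_{k\ge n}x_k=\bigwedge_n x_n$, so $\limsup$-closure directly yields $\bigwedge_n x_n\in F$. The substantive direction is (c)$\Rightarrow$(b): given an arbitrary sequence $x$ in $F$, I must produce $\limsup x$ as the infimum of a decreasing sequence whose terms lie in $F$. Here I would set $z_n=\bigvee_{k\ge n}x_k$ for each $n$; this sequence is manifestly decreasing and $\bigwedge_n z_n=\limsup x$ by definition. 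Each $z_n$ lies in $F$ because $z_n\ge x_n\in F$ and $F$ is upward closed, and then $\omega_1$-closedness gives $\limsup x=\bigwedge_n z_n\in F$.

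The main obstacle, and the step deserving the most care, is precisely this last construction in (c)$\Rightarrow$(b): one must verify that the tail-joins $z_n=\bigvee_{k\ge n}x_k$ genuinely form a decreasing sequence lying in $F$ and that their infimum is exactly $\limsup x$. The decreasing property and the infimum identity are definitional unwindings of $\limsup$, and membership $z_n\in F$ uses only upward closure applied to $z_n\ge x_n$; the completeness of ${\mathbb B}$ guarantees each $z_n$ and the infimum exist. No appeal to distributivity or to condition $(\hbar)$ is needed, so the result holds for every complete Boolean algebra, which is worth emphasizing.
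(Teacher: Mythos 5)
Your proof is correct and follows essentially the same route as the paper: both use $\lambda^\uparrow(x)=(\limsup x)\uparrow$ to identify closedness with upward closure plus $\limsup$-closure, and both handle the substantive direction out of (c) by passing to the decreasing sequence of tail-joins, which lie in $F$ by upward closure and whose infimum recovers the needed element. The only cosmetic difference is that you show $\limsup x\in F$ directly (proving (c)$\Rightarrow$(b)), whereas the paper joins $b$ onto the tails to land on an arbitrary $b\in u(F)$ (proving $u(F)=F$); the underlying computation $b\vee\bigwedge_k\bigvee_{n\ge k}x_n=b\vee\limsup x$ is the same.
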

\begin{proof} (a)$\Rightarrow$(b) Let $F$ be a closed set. Then
$F=u(F)=u(F)\uparrow$, which implies that $F$ is upward closed.
Fact \ref{T2403} implies that $F$ is closed with respect to the convergence $\lambda^\uparrow$. Since for each sequence $x$,
$\limsup x\in\lambda^\uparrow(x)$, we conclude that $F$ is closed
with respect to the $\limsup$ operator.

(b)$\Rightarrow$(a) Let $F$ be a set closed with respect to the
$\limsup$ operator such that $F=F\uparrow$.  Let us prove that
$F=u(F)$. Let $x$ be a sequence in $F$. Then $\limsup x\in F$ and
each point $a$ such that $a\geq \limsup x$ is also in $F$. Therefore
$\limsup x \uparrow =\lambda^{\uparrow}(x)\subset F$, which implies
$u(F)\subset F$. The equality follows from Fact \ref{T2301}.

(b)$\Rightarrow$(c) It follows directly, since $\limsup$ of a
decreasing sequence is the infimum of elements of the sequence.

(c)$\Rightarrow$(b) It will be sufficient to prove that $u(F)=F$.
Let $b \in u(F)$. Then there exists a sequence $a=\langle a_n:n \in
\omega\rangle$ in $F$ such that $b \geq \limsup a$. Let $$\textstyle
c_k=b \vee \bigvee_{n\geq k} a_n, ~ k\in \omega.$$ Since $c_k \geq
a_k \in F$, we have $c_k \in F\uparrow=F$, which implies that
$c=\langle c_k:k\in \omega\rangle \in F^\omega$. It is obvious that
the sequence $c$ is decreasing, which, by (c), implies that
$\bigwedge_{k\in \omega}c_k \in F$. Finally,  from
$$\textstyle \bigwedge_{k\in \omega} c_k=\bigwedge_{k\in \omega}(b\vee \bigvee_{n\geq k}
a_n)=b \vee \bigwedge_{k\in \omega} \bigvee_{n\geq k} a_n= b \vee
\limsup a =b$$ it follows that $b \in F$. \end{proof}

\begin{con}\label{T4132}
Each closed set in the topological space $\langle {\mathbb
B},{\mathcal O}^\uparrow\rangle $  is upward closed. Each
topologically open set is a downward closed (algebraically open)
subset of the Boolean algebra ${\mathbb B}$ . Therefore, for each
point $b$ and its  neighborhood $U$  we have $b\downarrow \subset
U$. So, each not empty  open set contains 0.
\end{con}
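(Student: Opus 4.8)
The plan is to obtain the whole statement as an immediate consequence of Theorem~\ref{T4105}, and in particular of the implication (a)$\Rightarrow$(b) established there. First, since a closed set $F$ in $\langle\mathbb{B},\mathcal{O}^\uparrow\rangle$ satisfies condition (b) of that theorem, it is by definition upward closed; this settles the first assertion with no further work.

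For the second assertion I would pass to complements. Every topologically open set $O$ can be written as $O=\mathbb{B}\setminus F$ for a closed set $F$, which by the first part is upward closed. It then suffices to verify the elementary lattice fact that the complement of an upward-closed set is downward closed: if $b\in O$ and $a\leq b$, then $a\in O$, for otherwise $a\in F$ together with $a\leq b$ and $F=F\uparrow$ would force $b\in F$, contradicting $b\in O$. Hence $O=O\downarrow$, i.e.\ $O$ is algebraically open (downward closed).

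The last two claims are then direct. Given a point $b$ and a neighborhood $U$ of $b$, choose an open set $O$ with $b\in O\subseteq U$; since $O$ is downward closed by the previous paragraph and $b\in O$, we get $b\downarrow\subseteq O\subseteq U$, that is $b\downarrow\subseteq U$. In particular, if $O$ is any nonempty open set, picking $b\in O$ yields $0\in b\downarrow\subseteq O$, so $0\in O$.

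There is essentially no obstacle here: the corollary is a bookkeeping consequence of Theorem~\ref{T4105}, and the only substantive (but trivial) point is the contrapositive argument showing that complements of upward-closed sets are downward closed, from which the remaining two statements follow by simply specializing to neighborhoods and then to an arbitrary point of a nonempty open set.
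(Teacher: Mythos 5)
Your argument is correct and follows exactly the route the paper intends: the paper states this corollary without a separate proof, treating it as an immediate consequence of Theorem~\ref{T4105} (closed sets are upward closed), with the remaining claims obtained by complementation and by shrinking a neighborhood to an open set, just as you do. Nothing is missing.
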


\begin{con}\label{T4133}~

(a) The topological space $\langle {\mathbb B},{\mathcal
O}^\uparrow\rangle $ is connected.

(b) The topological space $\langle {\mathbb B},{\mathcal
O}^\uparrow\rangle $ is $T_0$ and can not be $T_1$.

(c) The topological space $\langle {\mathbb B},{\mathcal
O}^\uparrow\rangle $ is compact.
\end{con}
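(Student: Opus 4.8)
The plan is to read off all three statements from the structural description of the topology already obtained in Corollary \ref{T4132} and Theorem \ref{T4105}: every closed set is upward closed, every open set is downward closed (algebraically open), and every nonempty open set contains $0$. I would first record the dual fact, which does most of the work: since a nonempty closed set $F$ is upward closed, picking any $a\in F$ gives $a\upar\subset F$ and hence $1\in F$; equivalently, the only open set containing $1$ is $\mathbb{B}$ itself, because an open neighborhood $U$ of $1$ is downward closed and so $\mathbb{B}=1\downar\subset U$. The whole corollary then reduces to exploiting the asymmetric roles of $0$ and $1$.

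For part (c) I would argue compactness directly from an open cover $\mathcal{U}$ of $\mathbb{B}$. The top element $1$ lies in some $U\in\mathcal{U}$; since $U$ is a neighborhood of $1$, Corollary \ref{T4132} gives $\mathbb{B}=1\downar\subset U$, so $U=\mathbb{B}$ and $\{U\}$ is already a finite subcover. Thus the space is compact (in fact trivially so, as $\mathbb{B}$ is the unique open set containing $1$).

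For part (a) I would use the companion observation about $0$. Suppose, toward a contradiction, that $\mathbb{B}=U\cup V$ with $U,V$ open, nonempty and disjoint. By Corollary \ref{T4132} each nonempty open set contains $0$, so $0\in U\cap V$, contradicting $U\cap V=\emptyset$; hence no such partition exists and the space is connected. For part (b), to get $T_0$ take distinct $a,b$: in a Boolean algebra we cannot have both $a\leq b$ and $b\leq a$, so relabel to assume $b\not\leq a$. The principal filter $b\upar$ is closed (it is upward closed, and the infimum of any decreasing sequence of elements $\geq b$ is again $\geq b$, so it is $\omega_1$-closed; apply Theorem \ref{T4105}), hence $O:=\mathbb{B}\setminus b\upar$ is open, and $O$ contains $a$ but not $b$, giving $T_0$. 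To see that $T_1$ fails, note that in a nontrivial algebra $0\neq 1$; a $T_1$ space has all singletons closed, but by Theorem \ref{T4105} every closed set is upward closed, and $\{0\}$ is not upward closed since $1\in 0\upar=\mathbb{B}$ with $1\neq 0$. Hence $\{0\}$ is not closed and the space is not $T_1$.

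I expect no serious obstacle here: each part is a one-line consequence once the right structural lemma is in hand. The only points needing mild care are verifying that the principal filters $b\upar$ are indeed closed (the single nontrivial use of Theorem \ref{T4105}) in the $T_0$ argument, and flagging the standing nondegeneracy assumption $0\neq 1$ in the not-$T_1$ part, without which a one-point algebra would be vacuously $T_1$. The conceptual heart of all three items is simply the asymmetry between $0$ (which every nonempty open set must contain) and $1$ (which every nonempty closed set must contain).
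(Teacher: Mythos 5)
Your proposal is correct and follows essentially the same route as the paper: connectedness and the failure of $T_1$ from the fact that every nonempty open set contains $0$, compactness from $\mathbb{B}$ being the only open set containing $1$, and $T_0$ by separating points with complements of the closed principal filters $b\upar$. The extra details you supply (verifying $b\upar$ is closed via Theorem \ref{T4105}, and noting the nondegeneracy $0\neq 1$) are just expansions of steps the paper leaves implicit.
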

\begin{proof}

 (a) ${\mathbb B}$ can not be the union of two disjoint
non-empty open sets, since both of them would contain 0.

(b) Since each non-empty open set contains $0$, the space is not
$T_1$. Since, for each $b \in {\mathbb B}$, the set ${\mathbb
B}\setminus (b \uparrow)$ is open, one can easily verify that
$\langle {\mathbb B},{\mathcal O}^\uparrow\rangle $ is a $T_0$
space.

(c) The statement follows directly from the fact that the only open
set containing $1$ is ${\mathbb B}$.\end{proof}

The complementation  is not a continuous function in $\langle
{\mathbb B},{\mathcal O}^\uparrow\rangle $, since the inverse image
of an open set (which is also a downward closed set) is an upward
closed set. For meet and join as operations in ${\mathbb B}$ we have
the following.

\begin{lm}\label{T4118}
For each element $a \in {\mathbb B}$ the functions
$f_a^{\vee},f_a^{\wedge}:{\mathbb B} \rightarrow {\mathbb B}$
defined by $f_a^{\vee}(x)=x \vee a$ and $f_a^{\wedge}(x)=x \wedge a$
are continuous.
\end{lm}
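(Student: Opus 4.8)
The plan is to prove continuity of each map by showing that the preimage of an arbitrary closed set is again closed, using the characterization from Theorem \ref{T4105}: a set is closed in $\langle {\mathbb B},{\mathcal O}^\uparrow\rangle$ exactly when it is upward closed and closed under the $\limsup$ operator. So I fix $a\in {\mathbb B}$ and a closed set $F$, and consider the two preimages
$$
G^\wedge=(f_a^\wedge)^{-1}(F)=\{x\in{\mathbb B}:x\wedge a\in F\},\qquad
G^\vee=(f_a^\vee)^{-1}(F)=\{x\in{\mathbb B}:x\vee a\in F\}.
$$
(Equivalently, since $\langle{\mathbb B},{\mathcal O}^\uparrow\rangle$ is a sequential space by Fact \ref{T1201}(c), it would suffice to check sequential continuity; but the preimage argument is more direct and reuses Theorem \ref{T4105} verbatim.)

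First I would verify that $G^\wedge$ and $G^\vee$ are upward closed. This is immediate from the monotonicity of $x\mapsto x\wedge a$ and $x\mapsto x\vee a$: if $x\leq y$ then $x\wedge a\leq y\wedge a$ and $x\vee a\leq y\vee a$, and since $F$ is upward closed, membership of the smaller image in $F$ forces membership of the larger one, so $x\in G^\wedge$ (resp.\ $G^\vee$) implies $y\in G^\wedge$ (resp.\ $G^\vee$).

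The heart of the proof is closure of these preimages under $\limsup$, and for this I would first establish the two commutation identities
$$
(\limsup x)\wedge a=\limsup\langle x_n\wedge a\rangle,\qquad
(\limsup x)\vee a=\limsup\langle x_n\vee a\rangle,
$$
valid for every sequence $x\in{\mathbb B}^\omega$. Granting these, the argument closes quickly: if $\langle x_n\rangle$ is a sequence in $G^\wedge$, then each $x_n\wedge a\in F$, so by the first identity and the fact that $F$ is $\limsup$-closed we get $(\limsup x)\wedge a=\limsup\langle x_n\wedge a\rangle\in F$, i.e.\ $\limsup x\in G^\wedge$; the second identity handles $G^\vee$ identically. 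Combined with the upward-closedness above, Theorem \ref{T4105} yields that $G^\wedge$ and $G^\vee$ are closed, hence $f_a^\wedge$ and $f_a^\vee$ are continuous.

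The main obstacle, though a routine one in this setting, is establishing the two identities, which is the only place where completeness of ${\mathbb B}$ is genuinely used. For the meet map one expands $(\limsup x)\wedge a=\bigl(\bigwedge_k\bigvee_{n\geq k}x_n\bigr)\wedge a$ and pushes $a$ inside, using the infinite distributive law $a\wedge\bigvee_i b_i=\bigvee_i(a\wedge b_i)$ on each inner join and the trivial commutation of $a\wedge(\cdot)$ with the outer meet. For the join map one absorbs $a$ into each inner join, $\bigvee_{n\geq k}(x_n\vee a)=a\vee\bigvee_{n\geq k}x_n$, and then pulls $a$ through the outer meet by the dual infinite law $a\vee\bigwedge_i b_i=\bigwedge_i(a\vee b_i)$, which likewise holds in every complete Boolean algebra. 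Once these distributive computations are in place, the lemma follows.
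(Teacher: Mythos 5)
Your proposal is correct and follows essentially the same route as the paper: fix a closed set $F$, show the preimage under $f_a^{\wedge}$ (resp.\ $f_a^{\vee}$) is again closed by verifying the two conditions of Theorem \ref{T4105}, with the upward-closedness argument identical to the paper's. The only difference is that you verify clause (b) (closure under $\limsup$, via the distributivity identities $(\limsup x)\wedge a=\limsup\langle x_n\wedge a\rangle$ and $(\limsup x)\vee a=\limsup\langle x_n\vee a\rangle$), whereas the paper verifies the equivalent clause (c) ($\omega_1$-closure for decreasing sequences, using $\bigwedge_n(x_n\wedge a)=(\bigwedge_n x_n)\wedge a$); both computations rest on the same infinite distributive laws of a complete Boolean algebra and are interchangeable here.
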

\begin{proof} We will prove that $f_a^{\wedge}$ is continuous. The
proof for $f_a^{\vee}$ is analogous. Let $F$ be a closed set.   We
have
$$x\in (f_a^{\wedge})^{-1}[F] \Leftrightarrow f_a^{\wedge}(x)\in F
\Leftrightarrow x \wedge a \in F.$$ So,
$$(f_a^{\wedge})^{-1}[F]=\{x \in {\mathbb B}: x \wedge a \in F\}.$$
We will prove  that $(f_a^{\wedge})^{-1}[F]$ is closed, i.e.,  by
Theorem \ref{T4105} (c),  $(f_a^{\wedge})^{-1}[F]$ is upward closed
and $\omega_1$-closed.

Firstly, let us prove that $(f_a^{\wedge})^{-1}[F]$ is upward
closed. Let $x_1\geq x\in (f_a^{\wedge})^{-1}[F]$. Then $x_1 \wedge
a \geq x \wedge a \in F$, and since $F=F\uparrow$, we have
$x_1\wedge a \in F$, i.e.\ $x_1 \in (f_a^{\wedge})^{-1}[F]$.

It remains to be proved that $(f_a^{\wedge})^{-1}[F]$ is
$\omega_1$-closed. Let $\langle x_n :n \in \omega\rangle $ be a
decreasing sequence in $(f_a^{\wedge})^{-1}[F]$. Then $\langle
x_n\wedge a :n \in \omega\rangle $ is a decreasing sequence in $F$,
and since $F$ is $\omega_1$-closed, there holds $\bigwedge_{n\in
\omega} (x_n\wedge a)=(\bigwedge_{n\in \omega} x_n)\wedge a \in F$.
So, $\bigwedge_{n\in \omega} x_n  \in (f_a^{\wedge})^{-1}[F]$. \end{proof}

\subsection{The closure operator}

Iterating the operator $u$ $\omega_1$-times,  we obtain the operator  $\cl_{\omega_1}$ which is  the closure operator in the space $\langle
{\mathbb B},{\mathcal O}^\uparrow\rangle $.

According to Fact \ref{T2301}, we have $A \subset u(A) \subset
\cl_\alpha(A)$, for each $A \subset {\mathbb B}$ and $1\leq \alpha
\leq \omega_1$. Since the operator $u$ is defined on a Boolean
algebra, we can refine this sequence of inclusions.

\begin{thr}\label{T4102} For each set $A\subset {\mathbb B}$ and $\alpha\in [1,\omega_1]$ we have
$$A\subset A\uparrow \subset u(A)=u(A)\uparrow \subset \cl_{\alpha}(A)=\cl_{\alpha}(A)\uparrow\subset
(\bigwedge A)\uparrow.$$
\end{thr}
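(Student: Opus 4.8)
The plan is to establish the chain link by link, observing that all inclusions except the last one follow quickly from facts already in hand, while the genuine content lies in proving $\cl_\alpha(A)\subset(\bigwedge A)\uparrow$. Throughout I identify $\cl_\alpha$ with the iterate $u^\alpha_{\lambda^\uparrow}$ of Fact \ref{T1206}, so that $\cl_{\omega_1}$ is the closure operator of $\langle{\mathbb B},{\mathcal O}^\uparrow\rangle$.

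First I would dispatch the left-hand links. The inclusion $A\subset A\uparrow$ is immediate from $a\leq a$. For $A\uparrow\subset u(A)$, I would feed each $a\in A$ into the constant sequence $\langle a\rangle\in A^\omega$: by Theorem \ref{T4003s}, $\lambda^\uparrow(\langle a\rangle)=(\limsup\langle a\rangle)\uparrow=a\uparrow$, so $a\uparrow\subset u(A)$ and hence $A\uparrow=\bigcup_{a\in A}a\uparrow\subset u(A)$. The equality $u(A)=u(A)\uparrow$ is the ``obviously'' already recorded after the definition of $u_{\lambda^\uparrow}$, reflecting that each value $\lambda^\uparrow(y)=(\limsup y)\uparrow$ is a principal up-set.

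Next I would handle $u(A)\subset\cl_\alpha(A)$ and the upward closedness $\cl_\alpha(A)=\cl_\alpha(A)\uparrow$ simultaneously, by transfinite induction on $\alpha\in[1,\omega_1]$ along the recursion of Fact \ref{T1206}. At $\alpha=1$ we have $\cl_1(A)=u(A)$, upward closed by the previous paragraph. At a successor $\alpha+1$, the set $\cl_{\alpha+1}(A)=u(\cl_\alpha(A))$ is upward closed because every value of $u$ is, and $u(A)\subset\cl_{\alpha+1}(A)$ follows from the monotonicity in Fact \ref{T2301}(c) together with $A\subset\cl_\alpha(A)$. At a limit $\gamma$, the set $\cl_\gamma(A)=\bigcup_{\alpha<\gamma}\cl_\alpha(A)$ is a union of upward closed sets each containing $u(A)$, hence upward closed and $\supset u(A)$. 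The same monotonicity gives $\cl_\alpha(A)\subset\cl_{\omega_1}(A)$ for every $\alpha\leq\omega_1$.

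The main step is the final inclusion, and the key is that the principal up-set $(\bigwedge A)\uparrow$ is itself closed. By Theorem \ref{T4105}(c) it suffices to check it is upward closed---trivial for a principal up-set---and $\omega_1$-closed: if $x$ is a decreasing sequence in $(\bigwedge A)\uparrow$, then each $x_n\geq\bigwedge A$, so $\bigwedge_{n}x_n\geq\bigwedge A$ and the infimum again lies in $(\bigwedge A)\uparrow$. Since every $a\in A$ satisfies $a\geq\bigwedge A$, we also have $A\subset(\bigwedge A)\uparrow$. As $\cl_{\omega_1}$ is the closure operator (Fact \ref{T1206}), the set $\cl_{\omega_1}(A)$ is the smallest closed set containing $A$ and is therefore contained in any closed superset of $A$, whence $\cl_{\omega_1}(A)\subset(\bigwedge A)\uparrow$; combined with $\cl_\alpha(A)\subset\cl_{\omega_1}(A)$ this yields $\cl_\alpha(A)\subset(\bigwedge A)\uparrow$ for all $\alpha$. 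I expect the only delicate point to be this last inclusion: one must resist computing $\cl_\alpha(A)$ explicitly and instead exploit minimality of the closure against the ready-made closed set $(\bigwedge A)\uparrow$. Alternatively, the same induction works directly, using at each successor step that $\bigwedge\cl_\alpha(A)=\bigwedge A$ because $A\subset\cl_\alpha(A)\subset(\bigwedge A)\uparrow$.
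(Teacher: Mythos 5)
Your proof is correct, and most of the chain is handled exactly as in the paper: the first two links via constant sequences and Theorem \ref{T4003s}, the equality $u(A)=u(A)\uparrow$ from the fact that every value of $\lambda^\uparrow$ is a principal up-set, and the upward closedness of $\cl_\alpha(A)$ by the same transfinite induction (successor step via $u$, limit step via the union of upward closed sets). Where you genuinely diverge is the final inclusion $\cl_\alpha(A)\subset(\bigwedge A)\uparrow$. The paper argues computationally: for any sequence $x$ in $A$ one has $\limsup x\geq\bigwedge A$, hence $u(A)\subset(\bigwedge A)\uparrow$, and then the inclusion propagates through the iteration by induction (the step you also sketch as your ``alternative'', which needs the observation that $\bigwedge\cl_\alpha(A)=\bigwedge A$ at each stage). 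Your primary route instead verifies once and for all that $(\bigwedge A)\uparrow$ is a closed set via the characterization of Theorem \ref{T4105}(c) --- upward closed and $\omega_1$-closed are both immediate for a principal up-set --- and then invokes minimality of the closure operator $\cl_{\omega_1}=u^{\omega_1}_{\lambda^\uparrow}$ from Fact \ref{T1206}. This is legitimate (Theorem \ref{T4105} precedes this result and does not depend on it, so there is no circularity) and it buys you a cleaner final step with no transfinite bookkeeping; the paper's direct computation buys independence from the closed-set characterization and makes explicit the elementary inequality $\limsup x\geq\bigwedge\{x_n:n\in\omega\}$ that drives the whole bound. Both arguments are sound.
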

\begin{proof} ($A\subset A\uparrow$) Obvious.

($A\uparrow \subset u(A)$) Let $b\in  A\uparrow $. Then there exists
$a \in A$ such that $a \leq b$. Since $\lambda^\uparrow(\langle
a\rangle)= \limsup \langle a\rangle \uparrow= a \uparrow$, we have
$b \in \lambda^\uparrow(\langle a\rangle) \subset u(A)$.

($u(A)=u(A)\uparrow$) It is obvious that $u(A) \subset
u(A)\uparrow$. Let $b \in u(A)\uparrow$. Then there exists $a \in
u(A)$ such that $a \leq b$ and a sequence $x$ in $A$ such that $a
\in \limsup x \uparrow$. But then  there holds $b \in \limsup x
\uparrow$, so $b \in u(A)$.

($u(A)\uparrow \subset \cl_{\alpha}(A)$) Since, for each $\alpha
\leq \beta$, we have $\cl_{\alpha}(A)\subset \cl_{\beta}(A)$, it is
sufficient to show that $u(A)\uparrow \subset \cl_{1}(A)$, which
directly follows from
$$u(A)\uparrow=u(A)= \cl_{1}(A).$$

($\cl_{\alpha}(A)=\cl_{\alpha}(A)\uparrow$)   For an ordinal
$\alpha=\beta+1$ we have $\cl_{\alpha}(A)\uparrow=
\cl_{\beta+1}(A)\uparrow=u(\cl_{\beta}(A))\uparrow=u(\cl_{\beta}(A))=\cl_{\alpha}(A)$.

If $\alpha$ is a limit ordinal, let us suppose that for each
$\gamma<\alpha$ there holds
$\cl_{\gamma}(A)=\cl_{\gamma}(A)\uparrow$. Let $b \in
\cl_{\alpha}(A)\uparrow$. Then there exists $a \in \cl_{\alpha}(A)$
such that $a \leq b$ and there exists $\delta < \alpha$ such that $a
\in \cl_{\delta}(A)$. Since
$\cl_{\delta}(A)=\cl_{\delta}(A)\uparrow$ we have that $b \in
\cl_{\delta}(A)$, which implies that $b \in \cl_{\alpha}(A)$.

($cl_{\alpha}(A)\uparrow\subset (\bigwedge A)\uparrow$) Let $x$ be a
sequence in $A$. Then $\limsup x  \geq \bigwedge \{x_n:n\in
\omega\}\geq \bigwedge A$. Therefore $\limsup x \uparrow \subset
\bigwedge A \uparrow$, for an arbitrary sequence $x$ in $A$. This
implies that $u(A) \subset \bigwedge A \uparrow$, and then, easily,
by induction we obtain that $cl_{\omega_1}(A) \subset \bigwedge A
\uparrow$. \end{proof}

Let us find the closures of some specific sets.

\begin{lm}(The closure of a finite set)\label{T4101}
{\quad}

(a) $\cl_{\omega_1}(\{b\})=b \uparrow$, for each $b \in \mathbb{B}$.

(b) If $|A| < \aleph_0$, then $\cl_{\omega_1}(A)=\bigcup_{a\in A}
a\uparrow=A\uparrow$.
\end{lm}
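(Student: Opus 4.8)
The plan is to read both parts off the inclusion chain established in Theorem \ref{T4102}, which for a singleton pinches shut. For part (a), apply that theorem with $A=\{b\}$ and $\alpha=\omega_1$. Since $\bigwedge\{b\}=b$, the two ends of the chain coincide: on the left $\{b\}\uparrow=b\uparrow$ sits below $u(\{b\})\subset\cl_{\omega_1}(\{b\})$, while on the right $\cl_{\omega_1}(\{b\})\subset(\bigwedge\{b\})\uparrow=b\uparrow$. Hence $b\uparrow\subset\cl_{\omega_1}(\{b\})\subset b\uparrow$, forcing equality, and no further work is needed.

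For part (b), first note that $A\uparrow=\bigcup_{a\in A}a\uparrow$ is immediate from the definition of the up-set operator recorded in the preliminaries, so the second equality is purely notational. For the first equality I would use that $\cl_{\omega_1}$ is the topological closure operator of $\langle\mathbb{B},\mathcal{O}^\uparrow\rangle$ (Fact \ref{T1206}), hence a Kuratowski operator and in particular finitely additive. Writing the finite set $A$ as $\bigcup_{a\in A}\{a\}$ and combining finite additivity with part (a) gives
$$\cl_{\omega_1}(A)=\bigcup_{a\in A}\cl_{\omega_1}(\{a\})=\bigcup_{a\in A}a\uparrow=A\uparrow,$$
which is the claim.

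Alternatively, and to expose exactly where finiteness is used, one can show directly that $A\uparrow$ is closed and then sandwich it. By Theorem \ref{T4102} we already have $A\uparrow\subset\cl_{\omega_1}(A)$, so it suffices to prove $A\uparrow$ closed; by Theorem \ref{T4105}(c) it is enough to check that $A\uparrow$ is upward closed (clear) and $\omega_1$-closed. Given a decreasing sequence $\langle x_n\rangle$ in $A\uparrow$, pick for each $n$ some $a_n\in A$ with $a_n\leq x_n$; since $A$ is finite, some fixed $a\in A$ satisfies $a\leq x_n$ for infinitely many $n$, and as the sequence is decreasing this forces $a\leq x_m$ for all $m$, whence $\bigwedge_n x_n\geq a$ lies in $A\uparrow$.

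The only delicate step is the finite-additivity move in the clean proof of (b); this is no obstacle at all once one recalls that $\cl_{\omega_1}$ is a genuine topological closure, but it is precisely the place where the finiteness hypothesis is indispensable. The pigeonhole argument in the alternative route makes this transparent: for an infinite $A$ the set $A\uparrow$ need not be $\omega_1$-closed, so the conclusion genuinely can fail.
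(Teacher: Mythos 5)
Your proof is correct and follows essentially the same route as the paper: part (a) is the same sandwich argument from Theorem \ref{T4102}, and your finite-additivity argument for part (b) is exactly what the paper means by invoking condition (CO3) together with (a). The alternative direct verification that $A\uparrow$ is $\omega_1$-closed is a nice bonus but not needed.
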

\begin{proof} (a) Theorem \ref{T4102} implies $b
\uparrow=\{b\}\uparrow \subset \cl_{\omega_1}(\{b\}) \subset
\bigwedge\{b\} \uparrow=b\uparrow$.

(b) The statement follows directly  from condition (CO3) and (a).
\end{proof}


\begin{lm}\label{T4103}
{\quad}

(a) If $\langle a_n: n\in \omega\rangle $ is a decreasing sequence,
then $\cl_{\omega_1}(\{a_n:n\in \omega\})=(\bigwedge_{n \in \omega}
a_n)\uparrow$.

(b) If $\langle a_n: n\in \omega\rangle $ is an increasing sequence,
then $\cl_{\omega_1}(\{a_n:n\in \omega\})=a_0\uparrow$.

(c)  If $\{ a_n: n\in \omega\} $ is a countable antichain, then
$\cl_{\omega_1}(\{a_n:n\in \omega\})=\mathbb{B}$.

(d) If a set $A$ contains an infinite antichain, then
$\cl_{\omega_1}(A)={\mathbb B}$.

(e) If a set $A$ is dense in an infinite c.B.a.\ ${\mathbb B}$, then
$\cl_{\omega_1}(A)={\mathbb B}$.

\end{lm}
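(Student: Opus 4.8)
The plan is to derive all five parts from the inclusion chain of Theorem \ref{T4102}, which pins every closure between $A\uparrow$ and $(\bigwedge A)\uparrow$, together with the upward-closedness of $\cl_{\omega_1}(A)$ recorded in the same theorem and the Boolean-valued description $\limsup x=\||\tau_x|=\check\omega\|$ noted before Lemma \ref{T1264}.

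For (a), the observation is that for a decreasing sequence $\limsup$ coincides with the infimum, so $\limsup\langle a_n\rangle=\bigwedge_{n\in\omega}a_n=\bigwedge A$. Theorem \ref{T4102} gives $\cl_{\omega_1}(A)\subset(\bigwedge A)\uparrow$, while $\bigwedge A=\limsup\langle a_n\rangle\in u(A)\subset\cl_{\omega_1}(A)$; since $\cl_{\omega_1}(A)$ is upward closed this yields $(\bigwedge A)\uparrow\subset\cl_{\omega_1}(A)$, and the two inclusions give equality. For (b) the argument is even shorter: for an increasing sequence $\bigwedge A=a_0$, so Theorem \ref{T4102} bounds the closure above by $a_0\uparrow$, and $a_0\in A\subset\cl_{\omega_1}(A)$ together with upward-closedness bounds it below by $a_0\uparrow$; here $\limsup$ is not even needed.

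Part (c) is the crux. The key step is to show that $\limsup\langle a_n\rangle=0$ whenever $\{a_n:n\in\omega\}$ is an antichain, i.e. the $a_n$ are pairwise disjoint. This is where the forcing translation does the work: since $\limsup\langle a_n\rangle=\||\tau_x|=\check\omega\|$ and in any generic filter at most one member of a disjoint family can occur (two would force their meet $0$ into the filter), the statement $|\tau_x|=\check\omega$ is forced by no nonzero condition, so its Boolean value is $0$. Hence $0=\limsup\langle a_n\rangle\in u(A)\subset\cl_{\omega_1}(A)$, and upward-closedness gives $\mathbb{B}=0\uparrow\subset\cl_{\omega_1}(A)$, that is $\cl_{\omega_1}(A)=\mathbb{B}$.

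Finally, (d) and (e) reduce to (c) by monotonicity of $\cl_{\omega_1}$, inherited from monotonicity of $u$ (Fact \ref{T2301}(c)). For (d), if $\{a_n:n\in\omega\}\subset A$ is an infinite antichain, then $\mathbb{B}=\cl_{\omega_1}(\{a_n\})\subset\cl_{\omega_1}(A)\subset\mathbb{B}$. For (e) I would first invoke the standard fact that every infinite Boolean algebra contains an infinite antichain $\{c_n:n\in\omega\}$ (use the atoms if there are infinitely many, otherwise split a nonzero atomless part repeatedly), and then use the density of $A$ to choose $a_n\in A$ with $0<a_n\leq c_n$; the $a_n$ stay pairwise disjoint since $a_i\wedge a_j\leq c_i\wedge c_j=0$, so $A$ contains an infinite antichain and (d) applies. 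The only genuine obstacle is the vanishing of $\limsup$ for a disjoint sequence, which the generic-filter argument settles cleanly; everything else is bookkeeping around Theorem \ref{T4102} and Lemma \ref{T4101}.
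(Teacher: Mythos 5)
Your proof is correct and follows essentially the same route as the paper: upper and lower bounds from Theorem \ref{T4102} and upward-closedness for (a) and (b), the vanishing of $\limsup$ for an antichain plus upward-closedness of closed sets for (c), and reduction of (d) and (e) to (c) via monotonicity and the existence of infinite antichains in dense subsets. The only cosmetic difference is that you justify $\limsup\langle a_n\rangle=0$ for a pairwise disjoint sequence by the generic-filter translation, whereas the paper treats it as an immediate lattice computation; both are fine.
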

\begin{proof} ~

(a) Let $\langle a_n: n\in \omega\rangle $ be a decreasing sequence.
According to Lemma \ref{T4101} (a) we have
$\cl_{\omega_1}(\{\bigwedge_{n \in \omega}a_n\})=\{\bigwedge_{n \in
\omega}a_n\}\uparrow$, which implies that $\{\bigwedge_{n \in
\omega}a_n\}\uparrow$  is a closed set which includes the set
$\{a_n:n \in \omega\}$. So, $\cl_{\omega_1}(\{a_n:n\in
\omega\})\subset(\bigwedge_{n \in \omega} a_n)\uparrow$.  To prove
the opposite inclusion, it will be sufficient to show that
$\bigwedge_{n \in \omega} a_n \in u(\{a_n:n\in \omega\})$. This is
true, since $\limsup \langle a_n\rangle
=\bigwedge_{n \in \omega} a_n$.

(b) Let $\langle a_n :n \in \omega \rangle $ be an increasing
sequence. Then $a_0\uparrow \subset \cl_{\omega_1}(\{a_n:n\in
\omega\}) \subset \bigwedge \{a_n:n\in \omega\}\uparrow
=a_0\uparrow$.

(c) Let $\{a_n :n \in \omega\}$ be a countable antichain in
${\mathbb B}$. Without lose of generality we can suppose that  there
holds $n\neq m$ implies $a_n \neq a_m$. Let us consider the sequence
$\langle a_n:n\in \omega\rangle$. We have  $\limsup \langle a_n\rangle =0$. By Theorem \ref{T4105}, closed
sets are closed with respect to the  $\limsup$, which implies that
$0 \in \cl_{\omega_1}(\{a_n:n\in \omega\})$, and since, also by
Theorem \ref{T4105}, closed sets are upward closed, we have
$0\uparrow \subset \cl_{\omega_1}(\{a_n:n\in \omega\})$, i.e.\
${\mathbb B}=\cl_{\omega_1}(\{a_n:n\in \omega\})$.

(d) The statement is a direct consequence of  (c).

(e) The fact that each dense set in an infinite Boolean algebra
contains an infinite antichain together with (d) completes the
proof. \end{proof}

In the following examples we will show that  some inclusions from
Theorem \ref{T4102} can be strict.

\begin{ex}\label{EX4101} There exists a set $A$ such that
$cl_{\omega_1}(A)\neq (\bigwedge A)\uparrow$. Namely, let $a$ and
$b$ be two incomparable elements. Then, according to Lemma
\ref{T4101}, $\cl_{\omega_1}(\{a,b\}) =(a\uparrow) \cup (b\uparrow)
\neq (a \wedge b) \uparrow$.
\end{ex}

\begin{ex}\label{EX4102} There exists a set $A$ such that
$A\uparrow \neq u(A)$. Let $\langle a_n: n \in \omega\rangle $ be a
strictly decreasing sequence. Then we
have $\limsup \langle a_n\rangle=\bigwedge a_n$. So,
$$\bigwedge a_n=\limsup \langle a_n\rangle \in \lambda^\uparrow
(\langle a_n\rangle)\subset  u(\{a_n: n\in \omega\}),$$  but
$\bigwedge a_n \not \in \{a_n: n\in \omega\} \uparrow$.
\end{ex}




\subsection{Operator $\Dec$}

The closure operator $\cl_{\omega_1}$ is obtained by iterating the
operator $u$ $\omega_1$-times. For an arbitrary set $A$, $u(A)$ is
the union of all $\lambda^\uparrow$-limits of sequences from $A$.
But, instead of using the operator $u$ for generating the closure
operator, we can use another operator which concerns only decreasing
sequences  ($\langle y_n:n \in \omega\rangle$  is a decreasing
sequence iff $y_n \geq y_m$ for $n \leq m$).

\begin{df}\label{D4101}
Let  $\Dec:P({\mathbb B}) \rightarrow P({\mathbb B})$
 be the operator defined by
$$\Dec(A) =\textstyle \{\bigwedge_{n\in \omega}  y_n : \langle y_n\rangle \mbox { is
a decreasing sequence in }A\}.$$
\end{df}

\begin{lm}\label{T4109}
For an arbitrary $A\subset {\mathbb B}$ we have

(a) $u(A)=\Dec(A\uparrow)$;

(b) $u^2(A)=\Dec(\Dec((A\uparrow))$;

(c) $\Dec(A\uparrow) =\Dec(A\uparrow) \uparrow$.
\end{lm}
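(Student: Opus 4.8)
The plan is to prove part (a) directly by a double inclusion, and then obtain (b) and (c) almost for free from (a) together with the identity $u(A)=u(A)\uparrow$ furnished by Theorem \ref{T4102}.

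For the inclusion $u(A)\subset\Dec(A\uparrow)$ in (a), I would take $b\in u(A)$, so by the definition of $u$ there is a sequence $x$ in $A$ with $b\in(\limsup x)\uparrow$, i.e.\ $b\geq\limsup x$. Reusing the computation from the proof of Theorem \ref{T4105}, I set $c_k=b\vee\bigvee_{n\geq k}x_n$. Then $\langle c_k\rangle$ is decreasing, each $c_k\geq x_k\in A$ gives $c_k\in A\uparrow$, and since $b\geq\limsup x$ one computes $\bigwedge_k c_k=b\vee\limsup x=b$; hence $b\in\Dec(A\uparrow)$. For the reverse inclusion, I would take $b=\bigwedge_n y_n$ with $\langle y_n\rangle$ decreasing in $A\uparrow$, choose witnesses $a_n\in A$ with $a_n\leq y_n$, and use that a decreasing sequence collapses the join, $\bigvee_{n\geq k}y_n=y_k$, to get $\limsup a=\bigwedge_k\bigvee_{n\geq k}a_n\leq\bigwedge_k y_k=b$. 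Thus $b\in(\limsup a)\uparrow\subset u(A)$, completing (a).

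Part (b) then follows by applying (a) to the set $u(A)$: $u^2(A)=u(u(A))=\Dec(u(A)\uparrow)$, and since $u(A)=u(A)\uparrow$ by Theorem \ref{T4102}, this equals $\Dec(u(A))=\Dec(\Dec(A\uparrow))$ after one further use of (a). Part (c) is immediate: by (a), $\Dec(A\uparrow)=u(A)$, and $u(A)=u(A)\uparrow$ by Theorem \ref{T4102}, so $\Dec(A\uparrow)$ is upward closed.

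The main obstacle is the reverse inclusion in (a), where the decreasing sequence lives in $A\uparrow$ rather than in $A$, so its infimum is not manifestly a $\limsup$ of a sequence from $A$. The resolution is that passing from each $y_n$ to a witness $a_n\in A$ below it cannot raise the relevant infimum, because the monotonicity of $\langle y_n\rangle$ forces $\bigvee_{n\geq k}y_n=y_k$ and thereby collapses the $\limsup$ of the witness sequence down below $b$; upward closure of $u(A)$ then absorbs the possible gap and yields $b\in u(A)$.
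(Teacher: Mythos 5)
Your proposal is correct and follows essentially the same route as the paper's proof: the same auxiliary sequence $c_k=b\vee\bigvee_{n\geq k}x_n$ for the forward inclusion, the same witness argument with $\limsup a\leq\bigwedge_k y_k=b$ for the reverse one, and the same derivation of (b) and (c) from (a) together with $u(A)=u(A)\uparrow$.
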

\begin{proof} {~}

(a) ($\subset$) Let $b\in u(A)$. Then there exists a sequence
$a=\langle a_n:n \in \omega\rangle$ in $A$ such that $b \geq \limsup
a$. The sequence  $\langle c_k :k \in \omega\rangle $ defined by
$$\textstyle c_k=b \vee \bigvee_{n\geq k} a_n, ~ k\in \omega$$ is a
decreasing sequence, and since $c_k\geq a_k$ we have that $\langle
c_k:k \in \omega\rangle $ is a sequence in $A\uparrow$. From
$$\textstyle \bigwedge_{k\in \omega} c_k=\bigwedge_{k\in \omega}(b\vee \bigvee_{n\geq k}
a_n)=b \vee \bigwedge_{k\in \omega} \bigvee_{n\geq k} a_n= b \vee
\limsup a =b$$ it follows that $b \in \Dec(A\uparrow)$.

($\supset$) Let  $b \in \Dec(A\uparrow)$. Then there exists a
decreasing sequence $y=\langle y_n:n\in \omega\rangle $ in
$A\uparrow$ such that $b=\bigwedge_{n\in \omega}y_n.$ For each $n
\in \omega$ we have $y_n \in A\uparrow$ and therefore there exists
$x_n \in A$ such that $y_n \geq x_n$. Hence,
$$\textstyle b=\bigwedge_{n\in \omega}y_n=\limsup y\geq \limsup x,$$
where $x=\langle x_n:n \in \omega\rangle $. This implies that $b \in
(\limsup x) \uparrow \subset u(A) \uparrow$. Since, according to
Theorem \ref{T4102}, $u(A)=u(A) \uparrow$, we have $b \in
u(A)\uparrow$.

(b) $u(u(A))=\Dec(u(A)\uparrow)=\Dec(u(A))=\Dec(\Dec(A\uparrow))$.

(c) $ \Dec(A\uparrow)=u(A)=u(A) \uparrow=\Dec(A\uparrow)\uparrow$.
\end{proof}

In the sequel we will show that iterating the operator $\Dec$
$\omega_1$-times we obtain the closure operator $\cl_{\omega_1}$.

\begin{df}\label{D4102}
Let  $\Dec_\alpha : P({\mathbb B})
\rightarrow P({\mathbb B})$, for $0<\alpha \leq \omega_1$, be
defined by

$\cdot  \Dec_1(A)=\Dec(A\uparrow)$;

$\cdot  \Dec_{\alpha+1}(A)=\Dec(\Dec_\alpha(A))$;

$\cdot  \Dec_{\gamma}(A)=\bigcup_{\alpha < \gamma} \Dec_\alpha(A)$,
for a limit ordinal $\gamma$.
\end{df}

\begin{thr}\label{T4111}
For each $\alpha \in [1, \omega_1]$ and  $A \subset {\mathbb B}$ we
have $\Dec_{\alpha}(A)=\cl_{\alpha}(A)$.
\end{thr}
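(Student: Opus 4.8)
The plan is to prove the equality $\Dec_\alpha(A)=\cl_\alpha(A)$ by transfinite induction on $\alpha\in[1,\omega_1]$, treating the base case, successor step, and limit step separately. The whole argument rests on Lemma~\ref{T4109}(a), which gives the fundamental identification $u(A)=\Dec(A\upar)$, together with the fact (Theorem~\ref{T4102}) that $u(A)$ and each $\cl_\alpha(A)$ are already upward closed. The key point to keep in mind throughout is that $\Dec_1$ is defined as $\Dec(A\upar)$ precisely so that it absorbs the upward-closure step, whereas for $\alpha>1$ the operator $\Dec_{\alpha+1}(A)=\Dec(\Dec_\alpha(A))$ applies $\Dec$ to a set that is \emph{already} upward closed; this distinction is what makes the induction go through cleanly.

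For the base case $\alpha=1$, I would simply quote the definitions and Lemma~\ref{T4109}(a): $\Dec_1(A)=\Dec(A\upar)=u(A)=\cl_1(A)$, where the last equality is the definition of $\cl_1$ as one application of $u$. For the successor step, assume $\Dec_\alpha(A)=\cl_\alpha(A)$. Then $\Dec_{\alpha+1}(A)=\Dec(\Dec_\alpha(A))=\Dec(\cl_\alpha(A))$. Now the crucial observation is that by Theorem~\ref{T4102} the set $\cl_\alpha(A)$ is upward closed, so $\cl_\alpha(A)=\cl_\alpha(A)\upar$, and therefore $\Dec(\cl_\alpha(A))=\Dec(\cl_\alpha(A)\upar)=u(\cl_\alpha(A))$ by Lemma~\ref{T4109}(a) applied to the set $\cl_\alpha(A)$ in place of $A$. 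Since $u(\cl_\alpha(A))=\cl_{\alpha+1}(A)$ by the recursive definition of $\cl$, the successor step follows. For the limit step at a limit ordinal $\gamma\leq\omega_1$, both operators are defined as the union of the earlier stages, so $\Dec_\gamma(A)=\bigcup_{\alpha<\gamma}\Dec_\alpha(A)=\bigcup_{\alpha<\gamma}\cl_\alpha(A)=\cl_\gamma(A)$ using the induction hypothesis termwise.

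The step that requires the most care is the successor case, and specifically the justification that $\Dec$ applied to an upward-closed set equals one step of $u$. The subtlety is that Lemma~\ref{T4109}(a) is stated as $u(A)=\Dec(A\upar)$ for an \emph{arbitrary} $A$; to reuse it inside the induction I must apply it to the set $B:=\cl_\alpha(A)$ and use that $B\upar=B$, giving $u(B)=\Dec(B\upar)=\Dec(B)$. This is exactly where the upward-closedness guaranteed by Theorem~\ref{T4102} is indispensable, and it is worth remarking that without it the identity $\Dec(B)=u(B)$ would fail, since $\Dec$ alone does not perform the initial upward closure. I would also note at the outset that the monotonicity and the fact that the iteration stabilizes by stage $\omega_1$ are already encoded in the definitions of $\cl_\alpha$ and in Fact~\ref{T1206}, so the inclusion $\alpha=\omega_1$ is covered by the limit case without extra work.
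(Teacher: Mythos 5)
Your proof is correct and follows essentially the same route as the paper: transfinite induction with the base case $\Dec_1(A)=\Dec(A\upar)=u(A)=\cl_1(A)$ from Lemma~\ref{T4109}(a), the successor step via the upward-closedness of $\cl_\alpha(A)$ (Theorem~\ref{T4102}) so that $\Dec(\cl_\alpha(A))=\Dec(\cl_\alpha(A)\upar)=u(\cl_\alpha(A))$, and the limit step by taking unions termwise. Your explicit remark on why Lemma~\ref{T4109}(a) may be reapplied to the already upward-closed set $\cl_\alpha(A)$ makes the key point of the paper's chain of equalities more transparent, but it is the same argument.
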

\begin{proof} Let $A\subset{\mathbb B}$

For $\alpha=1$ there holds
$\Dec_1(A)=\Dec(A\uparrow)=u(A)=\cl_1(A)$. Let us suppose that for
each $\alpha \in [1,\beta)$ we have $\Dec_\alpha(A)= cl_\alpha(A)$.
Let us prove that $\Dec_\beta(A)=\cl_\beta(A)$.

If $\beta = \gamma+1$ then $\Dec_\beta(A)=\Dec_{\gamma+1}(A)=
\Dec(\Dec_{\gamma}(A))=\Dec(\cl_\gamma(A))=
\Dec(\cl_\gamma(A)\uparrow)=u(\cl(A))=\cl_{\gamma+1}(A)=\cl_\beta(A).$

If $\beta$ is a limit ordinal, then
$\Dec_\beta(A)=\bigcup_{\alpha<\beta}\Dec_\alpha(A)=
\bigcup_{\alpha<\beta}\cl_\alpha(A)=\cl_\beta(A)$. \end{proof}

Instead of iterating the operator $u$ $\omega_1$-times, Theorem
\ref{T4111} ensures us that the closure of a set $A$ can be obtained
iterating a simpler operator $\Dec$, starting with the set
$A\uparrow$.



\subsection{Closed sets and their minimal elements}

In the space $\langle {\mathbb B},{\mathcal O}^\uparrow\rangle $ a
set $F$ is closed iff $u(F)=F$. But this characterization holds in
each space obtained by a convergence satisfying (L1)
and (L2). In this subsection we will give some representations of
closed sets in the language of Boolean algebra. First we will
consider closed sets in ccc Boolean algebras.

\begin{thr}\label{T4106}
Let ${\mathbb B}$ be a  ccc c.B.a. Then each closed set $F\in
{\mathcal F}^\uparrow$ can be represented as
$$\textstyle F=\bigcup_{b \in {\rm Min}(F)}(b \uparrow),$$
where ${\rm Min}(F)=\{b \in F: b\mbox { is a minimal element of }F
\}$.
\end{thr}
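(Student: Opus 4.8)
The plan is to prove the two inclusions separately, the inclusion $\supseteq$ being immediate and the inclusion $\subseteq$ being where the ccc hypothesis enters through a transfinite descent. For $\supseteq$: each $b \in \mathrm{Min}(F)$ lies in $F$, and since $F$ is closed it is upward closed (Theorem \ref{T4105}, or Corollary \ref{T4132}), so $b\uparrow \subseteq F$; taking the union over $b \in \mathrm{Min}(F)$ yields $\bigcup_{b \in \mathrm{Min}(F)}(b \uparrow) \subseteq F$. The whole content of the theorem is therefore the converse: every element of $F$ dominates a minimal element of $F$.

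To establish $\subseteq$, I would fix $a \in F$ and build, by transfinite recursion, a non-increasing sequence $\langle a_\alpha : \alpha < \omega_1 \rangle$ in $F$ with $a_0 = a$. At a successor stage, if $a_\alpha$ is not minimal in $F$, I choose (by the axiom of choice) some $a_{\alpha+1} \in F$ with $a_{\alpha+1} < a_\alpha$; if $a_\alpha$ happens to be minimal the recursion halts there. At a countable limit $\gamma$ I set $a_\gamma = \bigwedge_{\alpha<\gamma} a_\alpha$. Since every $\gamma < \omega_1$ is a countable ordinal of cofinality $\omega$, I can fix an increasing cofinal sequence $\langle \gamma_n : n \in \omega\rangle$ in $\gamma$, so that $\bigwedge_{\alpha<\gamma} a_\alpha = \bigwedge_{n\in\omega} a_{\gamma_n}$ is the infimum of a decreasing $\omega$-sequence in $F$; by the $\omega_1$-closedness of $F$ (Theorem \ref{T4105}(c)) this infimum belongs to $F$, so $a_\gamma \in F$ is well defined and satisfies $a_\gamma \le a_\alpha$ for all $\alpha < \gamma$.

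The key step is to show the recursion must stop, i.e.\ that some $a_\alpha$ is minimal in $F$. Suppose not, so that the strict inequality $a_{\alpha+1} < a_\alpha$ holds at every successor stage, for all $\alpha < \omega_1$. Put $d_\alpha = a_\alpha \wedge a_{\alpha+1}'$; each $d_\alpha \neq 0$ by strictness. For $\alpha < \beta$ monotonicity gives $a_{\alpha+1} \ge a_\beta$, whence $d_\beta \le a_\beta \le a_{\alpha+1}$ while $d_\alpha \le a_{\alpha+1}'$, so $d_\alpha \wedge d_\beta = 0$. Thus $\{d_\alpha : \alpha < \omega_1\}$ is an antichain of size $\aleph_1$, contradicting the ccc of ${\mathbb B}$. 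Hence there is $\alpha < \omega_1$ with $a_\alpha$ minimal in $F$, and since $a_\alpha \le a_0 = a$ we get $a \in a_\alpha\uparrow \subseteq \bigcup_{b \in \mathrm{Min}(F)}(b\uparrow)$, completing the proof.

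The main obstacle is precisely this termination argument: it is the only place the ccc is used, and the disjoint-differences computation showing the successor differences $d_\alpha$ form an antichain is the heart of the matter. By contrast, the limit stages are routine once a cofinal $\omega$-subsequence is extracted and the $\omega_1$-closedness of $F$ is invoked, and the inclusion $\supseteq$ is a one-line consequence of upward closedness.
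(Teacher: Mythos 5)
Your proposal is correct and follows essentially the same route as the paper: a transfinite descending chain in $F$ of length $\omega_1$ (using $\omega_1$-closedness at countable limits) whose successor differences $a_\alpha \wedge a_{\alpha+1}'$ would form an uncountable antichain, contradicting ccc. The only difference is presentational — you run the recursion until it halts at a minimal element, while the paper assumes no minimal element exists below $a$ and derives the contradiction — and your verification that the differences are pairwise disjoint is slightly more explicit than the paper's.
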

\begin{proof} Let $F \subset {\mathbb B}$ such that $u(F)=F$ and let
$a \in F$. Let us suppose that there does  not exist a minimal
element of $F$ less than equal to $a$, i.e.
\begin{equation}\label{EQ1-T4106} \forall c \in F \cap a \downarrow ~ \exists d \in
F ~ d<c.
\end{equation}
Then it is clear that $0 \not \in F$. We  recursively define a
 chain
 $\langle a_\alpha:
\alpha<\omega_1\rangle $ such that $a_\alpha \in F \cap a
\downarrow$ and $a_\beta< a_\alpha$ for  $\alpha < \beta$.

Let $a_0=a$. Let us suppose that $a_\gamma$ is defined  for each
$\gamma < \beta$. If $\beta=\alpha+1$, then $a_\alpha\in F \cap a
\downarrow$ and, by (\ref{EQ1-T4106}), there exists  $a_{\alpha+1}
\in F$ such that $a_{\alpha+1}<a_\alpha$. If $\beta$ is a limit
ordinal, since $\beta < \omega_1$, then there exists an increasing
sequence of ordinals $\langle \alpha_n: n \in \omega\rangle $ such
that $\sup \alpha_n=\beta$.  Then $\langle a_{\alpha_n}:n \in
\omega\rangle $ is a strictly decreasing sequence in $F$. According
to Theorem \ref{T4105} (c), $\bigwedge_{n \in \omega}
a_{\alpha_n}\in F$. For let $a_\beta=\bigwedge_{n \in \omega}
a_{\alpha_n}$ we have $a_\beta < a_\alpha$, for each $\alpha <
\beta$.

In this way we  obtained a  chain of size $\omega_1$, which
generates the antichain $\langle a_\alpha \setminus a_{\alpha+1}
:\alpha < \omega_1\rangle$  of size $\omega_1$. This contradicts the
fact that ${\mathbb B}$ is a ccc algebra.

Since $F$ is an upward closed set, we have $F=\bigcup_{x \in F} x
\uparrow$. Also, for each $a \in F$ there exists $c \in
 {\rm Min}(F)$ such that $c\leq a$ which implies $a\uparrow \subset
c\uparrow$. So, $F=\bigcup_{a \in F} a \uparrow\subset \bigcup_{c
\in {\rm Min}(F)} c \uparrow\subset F$. \end{proof}

In general,  closed sets in $\langle {\mathbb B},{\mathcal
O}^\uparrow\rangle $ must not have minimal elements.

\begin{ex}\label{EX4112} Let ${\mathbb B}$ be a c.B.a.\ which is not
ccc. Then in ${\mathbb B}$  there exists an uncountable strictly
decreasing chain $\langle a_\alpha: \alpha < \omega_1\rangle $. Let
$A=\{a_\alpha : \alpha < \omega_1\}$. Let us show that
$A\uparrow=\bigcup_{\alpha<\omega_1} a_\alpha\! \uparrow$ is a
closed set. By Lemma \ref{T4103} (a), it will be sufficient to prove
that $\Dec(A\uparrow)=A\uparrow$. Obviously $A\uparrow \subset
\Dec(A\uparrow)$. Let $b \in \Dec(A\uparrow)$. Then there exists a
decreasing sequence $\langle b^n:n \in \omega\rangle$ in $A\uparrow$
such that $b=\bigwedge_{n\in \omega} b^n$. For each $n \in \omega$
there exists $a^n$ in $A$ such that $b^n\geq a^n$. We can w.l.o.g.\
suppose that  $\langle a^n :n \in \omega\rangle $ is also a
decreasing sequence. Since the cofinality of $\omega_1$ is not
$\omega$, there exists $\alpha< \omega_1$ such that $a_\alpha < a^n$
for each $n \in \omega$, which implies that
$$b = \bigwedge_{n\in \omega} b^n \geq \bigwedge_{n\in \omega} a^n
\geq a_\alpha.$$ So, $b\in a_\alpha\!\uparrow \subset A\uparrow$.

Since $\bigwedge A \not \in A\uparrow$ we conclude that $A\uparrow$
is a closed set without minimal elements.
\end{ex}

In Theorem \ref{T4106} it is shown that each closed set in a ccc
c.B.a.\ is determined by its minimal elements. In the following
theorem we will consider a set of the form $F=\bigcup_{x \in X} q_x
\uparrow\subset {\mathbb B}$, where $X$  is an arbitrary  set and
${\mathbb B}$  an arbitrary Boolean algebra. We will examine when
the set $\{q_x :x \in X\}$ is the set of minimal elements of $F$
and, using the set $\{q_x :x \in X\}$, we will give a
characterization of closed sets of the given form.

\begin{thr}\label{T4107}
Let ${\mathbb B}$ be a c.B.a.,  $X$  a non empty set and $\{q_x: x
\in X\} \subset {\mathbb B}$. Let  $\tau=\{\langle \check{x},q_x
\rangle : x \in X \}$ and $F=\bigcup_{x \in X} q_x \uparrow$. Then:

\noindent (1) If $x \neq y  \Rightarrow q_x \neq q_y$, then the
following conditions are equivalent:

(a) $q_x$ and $q_y$ are incomparable for different $x,y \in X$ ;

(b) $\forall x,y  \in X$ $(x \neq y \Rightarrow ||\check{x} \in \tau
\not \ni \check{y}||>0)$;

(c) $\{q_x :x \in X\}={\rm Min}(F)$.

\noindent (2) The following conditions are equivalent:

(a) $F$ is closed;

(b) $u(F)=F$;

(c) $\forall f:\omega \rightarrow X$ $\exists x \in X$ $q_x \leq
\limsup \langle q_{f(n)}\rangle $;

(d) $\forall f:\omega \rightarrow X$, where $f$ is finite to one,
$\exists x \in X$ $q_x \leq \limsup \langle q_{f(n)}\rangle $;

(e) $\forall A \in [X]^\omega$ $\exists x \in X$ $1 \Vdash \check{x}
\in \tau \Rightarrow |\tau \cap \check{A}|=\check{\omega}$.
\end{thr}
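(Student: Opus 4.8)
The plan is to prove the two cycles separately, resting on one forcing computation common to both: for $\tau=\{\langle\check x,q_x\rangle:x\in X\}$ we have $\|\check x\in\tau\|=\bigvee_{y\in X}(q_y\wedge\|\check x=\check y\|)=q_x$, since $\|\check x=\check y\|$ is $1$ for $x=y$ and $0$ otherwise.

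For part (1) I would first read ``$\check x\in\tau\not\ni\check y$'' as $\check x\in\tau\wedge\check y\notin\tau$, so by Fact~\ref{T719}(a),(b) its Boolean value is $q_x\wedge q_y'$; thus (b) for the ordered pair $(x,y)$ says $q_x\wedge q_y'>0$, i.e.\ $q_x\not\le q_y$, and demanding it for all ordered pairs is exactly pairwise incomparability, giving (a)$\Leftrightarrow$(b). For (a)$\Leftrightarrow$(c) I argue directly in the poset, using the standing assumption that $x\mapsto q_x$ is injective. Each $q_x\in F$; if the $q_x$ are pairwise incomparable, then any element of $F$ below $q_x$ is $\ge q_y$ for some $y$, forcing $q_y\le q_x$ hence $y=x$ and $q_y=q_x$, so $q_x$ is minimal and $\{q_x\}\subseteq{\rm Min}(F)$; conversely a minimal $m\in F$ dominates some $q_x\in F$, whence $m=q_x$, so ${\rm Min}(F)=\{q_x\}$. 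Finally, distinct minimal elements of any poset are incomparable, which with injectivity yields (c)$\Rightarrow$(a).

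For part (2), (a)$\Leftrightarrow$(b) is the closedness criterion $F=u(F)$ (Fact~\ref{T2403}). Since $F=\bigcup_x q_x\uparrow$ is upward closed and $u(F)=\bigcup_{a\in F^\omega}(\limsup a)\uparrow$ (Theorem~\ref{T4003s}), $u(F)=F$ means: every sequence $a$ in $F$ satisfies $\limsup a\in F$, i.e.\ $\exists x\ q_x\le\limsup a$. Specializing to sequences $\langle q_{f(n)}\rangle$ gives (b)$\Rightarrow$(c); conversely, for $a\in F^\omega$ choose $f(n)$ with $q_{f(n)}\le a_n$, so $\limsup\langle q_{f(n)}\rangle\le\limsup a$ and (c) supplies $x$ with $q_x\le\limsup a$, yielding (c)$\Rightarrow$(b). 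For (c)$\Leftrightarrow$(d) the forward direction is trivial; for (d)$\Rightarrow$(c) I split on $f$: if some value $x_0$ is attained infinitely often then $\limsup\langle q_{f(n)}\rangle\ge q_{x_0}$ outright, and otherwise $f$ is finite-to-one and (d) applies, the two cases being exhaustive.

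The heart of the matter is (d)$\Leftrightarrow$(e), and the hard part will be transporting the forcing statement. Using Lemma~\ref{T1287}(a) along a bijective enumeration of $A$, I would establish $\| |\tau\cap\check A|=\check\omega\|=\bigwedge_{B\in[A]^{<\omega}}\bigvee_{a\in A\setminus B}q_a$, the ``$\limsup$ over the set $A$''; combined with $\|\check x\in\tau\|=q_x$ and Fact~\ref{T719}(f), the assertion $1\Vdash\check x\in\tau\Rightarrow|\tau\cap\check A|=\check\omega$ becomes $q_x\le\| |\tau\cap\check A|=\check\omega\|$. The remaining bridge is the identity $\limsup\langle q_{f(n)}\rangle=\bigwedge_{B\in[A]^{<\omega}}\bigvee_{a\in A\setminus B}q_a$ for finite-to-one $f$ with range $A$: finite-to-oneness guarantees that each tail $\{f(n):n\ge k\}$ omits only finitely many elements of $A$ and that every finite $B\subseteq A$ is avoided by some tail, so the two infima coincide. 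Matching $\forall f$ finite-to-one in (d) with $\forall A\in[X]^\omega$ in (e) via $A=\ran f$ (infinite precisely because $f$ is finite-to-one) then closes the loop.
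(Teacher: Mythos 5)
Your proposal is correct and follows essentially the same route as the paper's proof: the same Boolean-value computations $\|\check x\in\tau\|=q_x$ and $\|\,|\tau\cap\check A|=\check\omega\|=\limsup\langle q_{f(n)}\rangle$, the same poset argument for part (1), and the same cycle through (b), (c), (d), (e) with the same case split on whether $f$ is finite-to-one. If anything you are slightly more careful than the paper in verifying the key identity for arbitrary finite-to-one $f$ (via the tail/cofinite-set argument) rather than only for an injective enumeration.
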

\begin{proof} (1)

(a)$\Rightarrow$(b)  Let $x,y \in X$ such that $x \neq y$. Since (a)
holds, we have $q_x \not \leq q_y$, which implies that $c=q_x \wedge
q_y'>0$. From  $c=\| \check{x}\in \tau\| \wedge \| \check{y} \in
\tau\|'=\|\check{x}\in \tau\not \ni \check{y}\|$ it follows (b).

(b)$\Rightarrow$(a) Let $x,y \in X$ such that $x \neq y$. Let
$c=\|\check{x}\in \tau\not \ni \check{y}\|$. By (b), we have that
$c=q_x \wedge q_y'>0$, so $q_x \not \leq q_y$. From $d=
\|\check{y}\in \tau\not \ni \check{x}\|>0$  it follows analogously
that $q_y \not \leq q_x$.

 (a)$\Rightarrow$(c) ($\subset$) Let $x \in X$. Then $q_x \in F$.
Let us suppose that  there exists $c \in F$ such that $c <q_x$.
Then, there exists $y \in X$ such that $q_y \leq c$, which implies
that $q_y<q_x$. A  contradiction.

($\supset$) Let $a$ be a minimal element of $F$. Then $a \in F$. So,
there exists $x \in X$ such that $q_x \leq a$. Since $q_x \in F$ and
$a $ is a minimal element, we have $a=q_x$.

(c)$\Rightarrow$(a) Let $x \neq y$. Then $q_x$ and $q_y$ are two
minimal elements, which implies that they are incomparable.

(2)

(a)$\Leftrightarrow$(b) This equivalence follows from Fact
\ref{T2403}.

 (b)$\Rightarrow$(c) Let $f:\omega\rightarrow X$. Then $\langle
q_{f(n)}:n \in \omega\rangle  $ is sequence in $F$, so,  $\limsup
\langle q_{f(n)}\rangle \in F$. Since, $F=\bigcup_{x \in X} q_x
\uparrow$, there exists $x \in X$ such that $q_x \leq \limsup
\langle q_{f(n)}\rangle $.

(c)$\Rightarrow$(d) It is obvious.

(d)$\Rightarrow$(b) Let $a=\langle a_n: n \in \omega\rangle $ be a
sequence in $F$. For each $n \in \omega$, there exists  $x_n \in X$
such that $q_{x_n} \leq a_n$. Then $\limsup \langle q_{x_n}\rangle
\leq \limsup \langle a_n\rangle $. Since $F$ is upward closed, it is
sufficient to show that $\limsup \langle q_{x_n}\rangle \in F$. Let
$f:\omega\rightarrow X$ such that $f(n)=x_n$. If $f$ is 'finite to
one', by (d), there exists $x \in X$ such that $q_x\leq \limsup
\langle q_{f(n)}\rangle = \limsup \langle q_{x_n}\rangle$.
Therefore, $\limsup \langle q_{x_n}\rangle\in F$. If $f$ is not
'finite to one', then there exists $x\in X$ such that $x_n=x$ for
infinitely many $n \in \omega$. Therefore $\limsup \langle
q_{x_n}\rangle=\bigwedge_{k \in \omega}\bigvee _{n\geq k}
q_{x_n}\geq \bigwedge_{k \in \omega} q_x=q_x$, which implies that
$\limsup \langle q_{x_n}\rangle\in F$.

(d)$\Rightarrow$(e) Let $A\in [X]^\omega$. Then there exists an
injection $f:\omega\rightarrow X$ such that $f[\omega]=A$. By (d),
there exists $x \in X$ such that
\begin{equation}\label{EQ1-T4107}
q_x \leq
\limsup \langle q_{f(n)}\rangle .
\end{equation} It is clear that $q_x=\|\check{x}\in \tau\|$.
Since $f$ is an injection, we have
\begin{eqnarray*}
\textstyle \limsup \langle q_{f(n)}\rangle &=& \textstyle
 \bigwedge_{k \in \omega} \bigvee_{n \geq k} \|f(n)\check{} \in
\tau\|\\ &=&\|\forall k \in \check{\omega}~ \exists n\geq k~
f(n)\check{} \in \tau\|\\ &=&\| |\tau \cap
\check{A}|=\check{\omega}\|.
\end{eqnarray*}From (\ref{EQ1-T4107}) we have $\|
\check{x}\in \tau\| \leq \| |\tau \cap \check{A}|=\check{\omega}\|$,
which is equivalent to $\|\check{x} \in \tau \Rightarrow |\tau \cap
\check{A}|=\check{\omega}\|=1$.

(e)$\Rightarrow$(d) Let $f: \omega\rightarrow X$ be a 'finite to
one' mapping. Then $f[\omega]\in [X]^\omega$, so, from (e) it
follows that there exists $x\in X$ such that $\|\check{x}\in
\tau\|\leq \| |\tau \cap f[\omega]\check{}|=\check{\omega}$. In
(d)$\Rightarrow$(e) part of the proof was showed that $\| |\tau \cap
f[\omega]\check{}|=\check{\omega}\|=\limsup \langle q_{f(n)}\rangle
$. This and the fact that $q_x=\|\check{x}\in \tau\|$ gives (d).
\end{proof}

In the following theorem we give a forcing characterization of
closed sets in the space $\langle {\mathbb B},{\mathcal
O}^\uparrow\rangle $, where ${\mathbb B}$ is an arbitrary c.B.a.

\begin{thr}\label{T4108}
Let $F$ be an upward closed set in ${\mathbb B}$. Then the following
conditions are equivalent:

(a) $F$ is closed;

(b) $\forall A \in[F]^\omega$ $\exists a \in F$ $a \Vdash |\Gamma
\cap \check{A}|=\check{\omega}$, where $\Gamma=\{\langle
\check{b},b\rangle :b\in {\mathbb B}^+\}$.



\end{thr}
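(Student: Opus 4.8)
The plan is to translate condition (b) into a clean inequality about $\limsup$ and then match it against the characterization of closed sets in Theorem \ref{T4105}. First I would record the basic fact that $\Gamma$ is simply the canonical name for the generic filter: for a generic $G$ one has $\Gamma_G=G$, and evaluating the membership value in the usual way gives, for every $c\in{\mathbb B}$,
$$\textstyle \|\check c\in\Gamma\|=\bigvee_{b\in{\mathbb B}^+}\big(b\wedge\|\check c=\check b\|\big)=c.$$
Using this, for a fixed $A=\{a_n:n\in\omega\}\in[F]^\omega$ (an injective enumeration) I would carry out the Boolean-value computation exactly as in the displays preceding Lemma \ref{T1287} and in Lemma \ref{T1287}(a):
\begin{equation*}
\big\||\Gamma\cap\check A|=\check\omega\big\|=\big\|\forall k\in\check\omega\;\exists n\geq k\;\check a_n\in\Gamma\big\|=\bigwedge_{k\in\omega}\bigvee_{n\geq k}\|\check a_n\in\Gamma\|=\bigwedge_{k\in\omega}\bigvee_{n\geq k}a_n=\limsup\langle a_n\rangle.
\end{equation*}
Since the $\limsup$ of an injective enumeration of $A$ does not depend on the chosen enumeration, I write $\limsup A$ for this element. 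By Fact \ref{T1803}(a) we then have $a\Vdash|\Gamma\cap\check A|=\check\omega$ iff $a\leq\limsup A$; hence (b) is equivalent to the statement that for every $A\in[F]^\omega$ there is $a\in F$ with $a\leq\limsup A$, and since $F$ is upward closed this is the same as requiring $\limsup A\in F$ for every $A\in[F]^\omega$.

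With this reformulation the equivalence follows from Theorem \ref{T4105}, which, as $F$ is assumed upward closed, asserts that $F$ is closed iff $\limsup x\in F$ for every sequence $x$ in $F$. For (a)$\Rightarrow$(b), given $A\in[F]^\omega$ I would enumerate $A$ injectively as a sequence $x$ in $F$; closedness yields $\limsup A=\limsup x\in F$, which is (b).

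For the converse (b)$\Rightarrow$(a) I would take an arbitrary sequence $x=\langle x_n\rangle$ in $F$ and split along the finite-to-one dichotomy already used in the proof of Theorem \ref{T4107}(2). If some value $b$ occurs infinitely often, then $\bigvee_{n\geq k}x_n\geq b$ for all $k$, so $\limsup x\geq b\in F$ and upward closedness gives $\limsup x\in F$. Otherwise $x$ is finite-to-one, its range $A:=\{x_n:n\in\omega\}$ lies in $[F]^\omega$, and since each value occurs only finitely often the tails $\{x_n:n\geq k\}$ are exactly the cofinite subsets of $A$ and are mutually cofinal (under inclusion) with the family $\{A\setminus S:S\in[A]^{<\omega}\}$; hence $\limsup x=\limsup A$. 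Then (b) supplies $a\in F$ with $a\leq\limsup A=\limsup x$, so again $\limsup x\in F$. In either case $\limsup x\in F$, so $F$ is closed.

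The main obstacle is the first paragraph: correctly identifying $\Gamma$ as the name for the generic filter and performing the Boolean-value computation so that the forcing condition collapses to the inequality $a\leq\limsup A$. Once (b) is recognized as the assertion ``$\limsup A\in F$ for every countable $A\subseteq F$,'' the equivalence with closedness is routine, the only genuine point being the finite-to-one reduction that passes from the $\limsup$ of arbitrary sequences to the $\limsup$ of countable subsets.
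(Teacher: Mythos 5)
Your proof is correct, and it takes a genuinely different (more direct) route than the paper. The paper's proof quotes Theorem \ref{T4107}(2), the equivalence (a)$\Leftrightarrow$(e), applied to $F=\bigcup_{b\in F}b\!\uparrow$ with the name $\tau=\{\langle\check b,b\rangle:b\in F\}$, and then devotes most of its length to establishing $1\Vdash\tau=\Gamma\cap\check F$ in order to replace $\tau$ by $\Gamma$ in condition (e). You instead work with $\Gamma$ from the outset: the observation $\|\check c\in\Gamma\|=c$ yields $\||\Gamma\cap\check A|=\check\omega\|=\limsup A$ at once, so by Fact \ref{T1803}(a) condition (b) collapses to ``$\limsup A\in F$ for every $A\in[F]^\omega$,'' and the equivalence with closedness then follows from Theorem \ref{T4105} together with the finite-to-one dichotomy. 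That dichotomy is precisely the content of the implication (d)$\Rightarrow$(b) in the proof of Theorem \ref{T4107}(2), and your Boolean-value computation is the one performed there in (d)$\Rightarrow$(e); in effect you have inlined the relevant pieces of Theorem \ref{T4107} rather than citing it. What your approach buys is a self-contained argument that avoids the name-identification step $1\Vdash\tau=\Gamma\cap\check F$ entirely; what the paper's approach buys is brevity given that Theorem \ref{T4107} is already on record. One cosmetic quibble: the tails $\{x_n:n\geq k\}$ of a finite-to-one sequence are not \emph{exactly} the cofinite subsets of the range $A$, only a family mutually cofinal with them under reverse inclusion --- but that cofinality is all you use, so the step stands.
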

\begin{proof}  Since $F$ is upward closed, we have $F=\bigcup_{b\in
F}b \uparrow$. By Theorem \ref{T4107} (2) (a)$\Leftrightarrow$(e),
we have  that $F$ is closed iff $\forall A \in [F]^\omega$ $\exists
a \in F$ $\| \check{a} \in \tau \Rightarrow |\tau \cap
\check{A}|=\check{\omega}\|=1$, where $\tau=\{\langle
\check{b},b\rangle: b \in F\}$.

Firstly we will prove that
\begin{equation}\label{EQ1-T4108}
1 \Vdash \tau =\Gamma \cap \check{F},\mbox { where
}\check{F}=\{\langle \check{b},1\rangle : b\in F\}.
\end{equation}
Let $b \in {\mathbb B}^+$. If $b\in F$, then $\|\check{b}\in
\tau\|=b$, and if $b \not \in F$, then $\|\check{b}\in \tau\|=0$.
Also, if $b\in F$, then $\|\check{b}\in \check{F}\|=1$, and if $b
\not \in F$, then $\|\check{b}\in \check{F}\|=0$. Therefore
$$\|\check{b}\in \tau\|=b \wedge \|\check{b}\in \check{F}\|=\|\check{b}
\in \Gamma\| \wedge\|\check{b}\in \check{F}\|=\| \check{b}\in \Gamma
\cap \check{F}\|.$$ This implies that
$$\forall b\in {\mathbb B}^+~ \|\check{b}\in \tau\|=\|\check{b} \in
\Gamma \cap \check{F}\|,$$ and by Fact \ref{T1803} (b) it follows
that
$$\forall b\in {\mathbb B}^+~ \|\check{b}\in \tau\Leftrightarrow\check{b} \in
\Gamma \cap \check{F}\|=1,$$ which is equivalent to
$$\| \forall b\in \check{{\mathbb B}}^+~  \check{b}\in \tau\Leftrightarrow\check{b} \in
\Gamma \cap \check{F}\|=1,~\mbox { i.e.}$$
$$\|\tau=\Gamma \cap \check{F}\|=1,$$
which completes the proof of (\ref{EQ1-T4108}).

Now, by (\ref{EQ1-T4108}), there holds  $$\forall A \in [F]^\omega ~
\exists a \in F~\| \check{a} \in \Gamma \cap \check{F} \Rightarrow
|\Gamma \cap \check{F} \cap \check{A}|=\check{\omega}\|=1.$$ Since,
for each ${\mathbb B}$-generic filter $G$ over $V$ we have
$\Gamma_G=G$, and $F_G=F$, then $\| \check{a} \in \Gamma \cap
\check{F} \Rightarrow |\Gamma \cap \check{F} \cap
\check{A}|=\check{\omega}\|=1$ is equivalent to the sentence that
for each generic filter $G$ there holds  $(a \in G \cap F
\Rightarrow |G\cap F \cap A|=\omega)$. Since $a \in F$ and $A
\subset F$, this is equivalent to the sentence that for each
${\mathbb B}$-generic filter $G$ there holds $(a \in G \Rightarrow
|G\cap A|=\omega)$, which is equivalent to $a \Vdash |\Gamma\cap
\check{A}|=\check{\omega}$. \end{proof}

\end{document}